\theoremstyle{plain}
\newtheorem{Lemma}{Lemma}
\newtheorem*{Lemma*}{Lemma}
\newtheorem{Thm}[Lemma]{Theorem}
\newtheorem*{Thm*}{Theorem}
\newtheorem{Prop}[Lemma]{Proposition}
\newtheorem{Cor}[Lemma]{Corollary}
\theoremstyle{definition}
\newtheorem{Defn}[Lemma]{Definition}
\newtheorem{Example}[Lemma]{Example}
\newtheorem{Const}[Lemma]{Construction}
\theoremstyle{remark}
\newtheorem{Remark}[Lemma]{Remark}
\newtheorem{Point}[Lemma]{}
\numberwithin{Lemma}{subsection}
\numberwithin{equation}{section}
\newcommand{\CCC}{\mathcal{C}}
\newcommand{\DDD}{\mathcal{D}}
\newcommand{\FFF}{\mathcal{F}}
\newcommand{\MMM}{\mathcal{M}}
\newcommand{\NNN}{\mathcal{N}}
\newcommand{\OOO}{\mathcal{O}}
\newcommand{\SSS}{\mathcal{S}}
\newcommand{\UUU}{\mathcal{U}}
\newcommand{\Fa}{\mathfrak{a}}
\newcommand{\Fg}{\mathfrak{g}}
\newcommand{\Fm}{\mathfrak{m}}
\renewcommand{\AA}{{\mathbb{A}}}
\newcommand{\BB}{{\mathbb{B}}}
\newcommand{\FF}{{\mathbb{F}}}
\newcommand{\GG}{{\mathbb{G}}}
\newcommand{\NN}{{\mathbb{N}}}
\newcommand{\II}{{\mathbb{I}}}
\newcommand{\QQ}{{\mathbb{Q}}}
\newcommand{\WW}{{\mathbb{W}}}
\newcommand{\ZZ}{{\mathbb{Z}}}
\newcommand{\bA}{{\mathbf A}}
\newcommand{\bB}{{\mathbf B}}
\newcommand{\bM}{{\mathbf M}}
\newcommand{\bN}{{\mathbf N}}
\newcommand{\bS}{{\mathbf S}}
\newcommand{\Gm}{{\mathbb{G}_m}}
\DeclareMathOperator{\Ad}{Ad}
\DeclareMathOperator{\AffNilp}{AffNilp}
\DeclareMathOperator{\Aut}{Aut}
\DeclareMathOperator{\End}{End}
\DeclareMathOperator{\Frob}{Frob}
\DeclareMathOperator{\GDisp}{-Disp}
\DeclareMathOperator{\GL}{GL}
\DeclareMathOperator{\Hom}{Hom}
\DeclareMathOperator{\Image}{Im}
\DeclareMathOperator{\Isom}{Isom}
\DeclareMathOperator{\Ker}{Ker}
\DeclareMathOperator{\Lie}{Lie}
\DeclareMathOperator{\Max}{Max}
\DeclareMathOperator{\Mod}{Mod}
\DeclareMathOperator{\Ogr}{O}
\DeclareMathOperator{\Rad}{Rad}
\DeclareMathOperator{\Spec}{Spec}
\DeclareMathOperator{\Spf}{Spf}
\DeclareMathOperator{\Sym}{Sym}
\DeclareMathOperator{\uAut}{\underline{Aut}}
\DeclareMathOperator{\uHom}{\underline{Hom}}
\DeclareMathOperator{\uIsom}{\underline{Isom}}
\DeclareMathOperator{\crys}{crys}
\DeclareMathOperator{\diag}{diag}
\DeclareMathOperator{\gr}{gr}
\DeclareMathOperator{\id}{id}
\DeclareMathOperator{\iso}{iso}
\DeclareMathOperator{\rk}{rk}
\DeclareMathOperator{\tr}{tr}
\newcommand{\fgauge}{\mathsf f}
\newcommand{\vgauge}{\mathsf v}
\renewcommand{\u}{\underline}
\newcommand{\uW}[1]{\u{W}{}_{#1}}
\begin{document}

\title{Higher frames and $G$-displays}
\author{Eike Lau}
\date{\today}

\begin{abstract}
Deformations of ordinary varieties of K3 type can be described in terms of displays by recent work of Langer-Zink. We extend this to the general (non-ordinary) case using displays with $G$-structure for a reductive group $G$. As a basis we suggest a modified definition of the tensor category of displays and variants which is similar to the Frobenius gauges of Fontaine-Jannsen. 
\end{abstract}

\maketitle

\section{Introduction}

Let $p$ be a prime.  

If $R$ is a ring in which $p$ is nilpotent,
by Langer-Zink \cite{Langer-Zink:DRW-Disp} the crystalline
cohomology of a smooth projective scheme over $R$ with good properties
carries the structure of a display\footnote{To 
simplify the terminology we write display instead of higher display,
while the 3n-displays of \cite{Zink:Display} will be called $1$-displays.} 
over the ring of Witt vectors $W(R)$.
In the case of abelian varieties this is related to the natural structure of
a 1-display
on the crystalline Dieudonn\'e module of a $p$-divisible group.

The notion of displays admits many variations.
For a local Artin ring $R$ with perfect residue field of odd characteristic
one can consider displays over Zink's small Witt vector ring,
denoted in the following by $\WW(R)$, as in
\cite{Zink:Dieudonne, Lau:Relations, Langer-Zink:K3}.
There is a definition of 1-displays over $m$-truncated Witt vectors,
which are related to truncated $p$-divisible groups
by \cite{Lau:Smoothness, Lau-Zink:Truncated}, 
and which are related to the $F$-zips of \cite{Moonen-Wedhorn:Discrete}
when $m=1$.

One can also endow displays and their variants with a $G$-structure
for a reductive group $G$. 
For $F$-zips this is carried out in \cite{Pink-Wedhorn-Ziegler:F-zips}.
A definition of $1$-displays over $W(R)$ with $G$-structure 
for a smooth group scheme appears in \cite{Bueltel}.
Building on this, B\"ultel-Pappas \cite{Bueltel-Pappas} defined $(G,\mu)$-displays over $W(R)$
for a minuscule cocharacter $\mu$ 
and used these to construct Rapoport-Zink spaces of Hodge type.
Here an essential point is a deformation theory similar
to the classical deformation theory of $1$-displays of \cite{Zink:Display}.


%
%
%

In this article we introduce a modified definition of displays in an abstract
setting that allows to treat different display-like objects uniformly, and
that allows a general definition of $G$-displays for a smooth group scheme
$G$ over $\ZZ_p$ relative to an arbitrary cocharacter $\mu$,
which have a good deformation theory when $\mu$ is minuscule.
As an application, a result of \cite{Langer-Zink:K3} 
on deformations of ordinary schemes of K3 type in terms of
displays over $\WW(R)$ with a quadratic form
is extended to the general (non-ordinary) case.

\medskip

Let us explain the content of this article in more detail.

As a basis of our definition of displays we consider triples 
$\u S=(S,\sigma,\tau)$,
called frames, which consist of a $\ZZ$-graded ring $S=\bigoplus S_n$
and ring homomorphisms $\sigma,\tau:S\to S_0$ with certain axioms.
A predisplay over $\u S$ is a graded $S$-module $M$ 
with a homomorphism of $S_0$-modules $F:M^\sigma\to M^\tau$.
A predisplay is called a display if $M$ is finite projective
and $F$ is bijective.
The tensor product of graded modules gives a tensor product of predisplays 
which preserves displays.
In this way, the exact tensor category of displays is embedded into the
abelian tensor category of predisplays.

This definition of displays has some similarity 
with the theory of Frobenius gauges of 
\cite{Fontaine-Jannsen:Frobenius-gauges};
from that point of view the main question is to find the correct 
$\varphi$-ring in the category of $\varphi$-gauges. 
But we will also consider frames that are not $\varphi$-gauges,
in particular when base rings occur which are not $\FF_p$-algebras.


Examples of frames:

(1)
For a $p$-adic ring $R$ one can define a Witt frame $\u W(R)$
such that the resulting displays are equivalent to
the displays of Langer-Zink (but the predisplays are different).
When $R$ is a perfect $\FF_p$-algebra, the corresponding description of Langer-Zink displays by $\varphi$-gauges appears in \cite{Wid}.

(2)
For an $\FF_p$-algebra $R$ there is a truncated version $\uW m(R)$ of the Witt frame,
which gives a definition of (higher) truncated displays.
For $m=1$ these are equivalent to the $F$-zips of \cite{Moonen-Wedhorn:Discrete}.
This is similar to a result of \cite{Schnellinger} showing that $\varphi$-$R$-gauges
are equivalent to the modified $F$-zips of \cite{Wedhorn:De-Rham}.

(3)
A local Artin ring $R$ with perfect residue field of odd characteristic will be called \emph{admissible}. In this case, the small Witt vector ring gives rise to a frame $\u\WW(R)$,
which gives the displays over $\WW(R)$ of \cite{Langer-Zink:K3}.

(4)
There are relative versions associated to divided power thickenings
of the Witt vector frame and its small variant,
denoted by $\u W(B/A)$ and $\u\WW(B/A)$,
which control the deformation theory of displays.

\medskip

Now let $G=\Spec A$ be a smooth affine group scheme over $\ZZ_p$ 
and let $\mu:\Gm\to G$ be a cocharacter;
later we allow that $\mu$ is
defined over a finite unramified extension of $\ZZ_p$.
The conjugation action 
via $\mu^{-1}$ makes $A$ into a $\ZZ$-graded ring. 
For a frame $\u S$ we define the display group
\[
G(S)_\mu\subseteq G(S)
\]
as the subgroup of graded ring homomorphisms $A\to S$,
or equivalently of invariant sections with respect to
the $\Gm$-action on $\Spec S$ corresponding to the given grading of $S$.
The ring homomorphisms $\sigma,\tau$ induce group
homomorphism $G(S)_\mu\to G(S_0)$, and we consider the right action
\[
G(S_0)\times G(S)_\mu \to G(S_0),\qquad (g,h)\mapsto \tau(h)^{-1}g\sigma(h).
\]
If the frame $\u S$ is functorial with respect to etale homomorphisms
of $R$, which holds for all frames which are related to Witt vectors,
we define the groupoid of $G$-displays of type $\mu$ over $\u S$ as the
quotient groupoid of this action with respect of the etale topology of $R$.
For $\u S=\uW 1(R)$ this gives the $G$-zips of type $\mu^{-1}$ of
\cite{Pink-Wedhorn-Ziegler:F-zips}; see Example \ref{Ex:G-zip}.
For $\u S=\u W(R)$ and minuscule $\mu$ this gives the
$(G,\mu^{-1})$-displays of \cite{Bueltel-Pappas}; see Remark \ref{Rk:BP}.

For suitable frames
there is a good deformation theory for $G$-displays of minuscule type $\mu$.
The key result, stated in the context of displays over $\WW(R)$, is the following proposition (see Proposition \ref{Pr:deformation-lemma} and Example \ref{Ex:WBA-sigma-nil-thickening}).

\begin{Prop}
\label{Pr:Intro}
For a nilpotent divided power thickening $B\to A$ 
of admissible local Artin rings
the frame homomorphism $\u\WW(B/A)\to\u\WW(A)$ induces an equivalence 
of $G$-displays of minuscule type $\mu$.
\end{Prop}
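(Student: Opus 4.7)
The functor being an equivalence amounts, after unwinding the quotient groupoid description of $G$-displays, to two statements: (i) every element of $G(\u\WW(A)_0)$ lifts to $G(\u\WW(B/A)_0)$; and (ii) for any two lifts $\tilde U_1, \tilde U_2$ of the same $U \in G(\u\WW(A)_0)$, there is a unique $\tilde h \in G(\u\WW(B/A))_\mu$ reducing to the identity and satisfying $\tau(\tilde h)^{-1} \tilde U_1 \sigma(\tilde h) = \tilde U_2$. Part (i) is immediate from smoothness of $G$, since $\u\WW(B/A)_0 \to \u\WW(A)_0$ is surjective with nilpotent kernel. The content of the proposition is part (ii), which simultaneously yields essential surjectivity, fullness, and faithfulness.

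My plan is to factor $B \to A$ as a tower of small PD thickenings, reducing to the case where the kernel $J = \ker(\u\WW(B/A)_0 \to \u\WW(A)_0)$ is a square-zero PD ideal (with the corresponding kernels in higher graded degrees behaving analogously). In this regime, elements of $G(\u\WW(B/A))_\mu$ reducing to the identity are parametrized by $\tilde h = \exp(x)$ with $x$ running over the subspace of $J \cdot \Lie G$ cut out by the $\mu$-grading condition. The minuscule hypothesis supplies the weight decomposition $\Lie G = \Fg_{-1} \oplus \Fg_{0} \oplus \Fg_{1}$, and combined with the grading of $\u\WW(B/A)$ this makes the parameter space for $x$ explicit and pairs it with the space in which $\tilde U_2 \tilde U_1^{-1}$ lives.

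Expanding the equation $\tau(\tilde h)^{-1} \tilde U_1 \sigma(\tilde h) = \tilde U_2$ modulo $J^2$ yields a linear equation
\[
-\tau(x) + \Ad(\tilde U_1)\,\sigma(x) \;=\; \tilde U_2 \tilde U_1^{-1} - 1
\]
in $J \cdot \Lie G$. The key analytic fact is that $\sigma$ restricted to the PD ideal of $\u\WW(B/A)$ is divisible by $p$ and, via the divided powers, becomes nilpotent on the finite pieces at play, whereas $\tau$ acts invertibly on the relevant minuscule-graded components. One then inverts $-\tau$ and solves the equation as a convergent Neumann series in $\Ad(\tilde U_1)\,\sigma\circ\tau^{-1}$, producing the unique $x$ and hence the unique $\tilde h$.

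The main obstacle is verifying the structural input just invoked: that $\Ad(\tilde U_1)\,\sigma$ acts as a (genuinely) nilpotent operator on the $\mu$-constrained piece of $J \cdot \Lie G$, and that the frame grading of $\u\WW(B/A)$ meshes with the minuscule weight decomposition so that the solution space for $x$ matches the target space of possible differences $\tilde U_2 \tilde U_1^{-1} - 1$. Both ingredients rest on the explicit construction of $\u\WW(B/A)$ as a relative small-Witt frame given earlier in the paper, together with the $p$-divisibility behaviour of $\sigma$ on the PD ideal; without minuscule $\mu$, higher-weight components of $\Lie G$ would spoil both the parametrization and the nilpotence, which is why the hypothesis cannot be dropped.
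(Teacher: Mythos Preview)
Your outline is essentially the paper's own argument: one shows that the functor of presheaf quotient groupoids
\[
[G(\tilde S_0)/G(\tilde S)_\mu]\to[G(S_0)/G(S)_\mu]
\]
is an equivalence by proving that the operator $\UUU_g=\Ad(g)\circ\sigma\circ\tau^{-1}$ on $G(K_0)$ is pointwise nilpotent and then solving via a converging product (your Neumann series). Your square-zero reduction is a legitimate simplification of the paper's direct treatment via the $p$-adic filtration of $K_0$.

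Two points deserve sharpening. First, the nilpotence does not come from ``$\sigma$ is divisible by $p$'': on the $U^+$-component, $\sigma\circ\tau^{-1}$ equals $\id\otimes\dot\sigma$ with $\dot\sigma=\sigma_1\circ\tau_1^{-1}$, and the crucial input is that under the logarithm $\hat W(J)\cong J^{(\NN)}$ the map $\dot\sigma$ is the shift, which is pointwise nilpotent precisely because the divided powers on $J$ are nilpotent. The $p$-divisibility only kills the $P^-$-part modulo $p$. Second, the conjugation $\Ad(\tilde U_1)$ mixes the $P^-$ and $U^+$ components, so nilpotence of the full operator is not immediate from nilpotence of $\dot\sigma$; the paper handles this by a cyclic-permutation argument reducing to an endomorphism of $U^+(\bar K_m)$ of the form $\psi\otimes\dot\sigma$. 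In your linearized setting the same factorization is needed.

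Two smaller omissions: you should also note that $G(\tilde S)_\mu\to G(S)_\mu$ is surjective (this is where the $1$-bounded hypothesis enters for $U^+$, and it is needed to reduce arbitrary isomorphisms to the case $h=1$), and that the kernel of $\WW(B)\to\WW(A)$ is not literally nilpotent but only nil modulo $p$ and $p$-adically complete, which still suffices for the smoothness lifting.
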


We have the following geometric application.
Let $R$ be an admissible local Artin ring with residue field $k$.
Langer-Zink \cite{Langer-Zink:K3}
consider a class of schemes $X/R$ of K3 type, which includes K3 surfaces,
and show that their second crystalline cohomology 
carries a display structure over $\u\WW(R)$ 
and a perfect quadratic form, which is the cup product in the case of K3 surfaces.
The following is proved in \cite{Langer-Zink:K3} when $X_0$ is ordinary.

\begin{Thm}
\label{Th:Intro}
(Theorem \ref{Th:Def-K3-scheme})
For any scheme of K3 type $X_0/k$ 
the deformations over $R$ of $X_0$ 
and of its second crystalline cohomology
as a display with a quadratic form
are equivalent. 
\end{Thm}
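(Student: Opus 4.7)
\medskip
\noindent\textbf{Proof plan.}
The plan is to reduce by induction on the length of $R$ to the following infinitesimal step: for any nilpotent divided power thickening $B\twoheadrightarrow A$ of admissible local Artin rings with common residue field $k$, and any deformation $X_A/A$ of $X_0$ with associated display-with-quadratic-form $\u M_A$ over $\u\WW(A)$, the set of lifts of $X_A$ to $B$ is in natural bijection with the set of lifts of $\u M_A$ over $\u\WW(B)$ in the category of displays with perfect quadratic form. The base case $R=k$ is tautological, so it suffices to construct this bijection and verify its functoriality in $B\twoheadrightarrow A$.

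Both sides will be matched through the relative frame $\u\WW(B/A)$. On the display side, take $G$ to be the orthogonal group of the quadratic form on $H^2_{\mathrm{crys}}(X_0/k)$ and $\mu$ the cocharacter coming from the Hodge grading of a K3 type scheme. Since the Hodge numbers in weight $2$ are $(1,n,1)$, the weights of $\mu$ are $-1,0,1$, and $\mu$ is the minuscule cocharacter of $G$ attached to its vector representation. Proposition \ref{Pr:Intro}, applied to this $(G,\mu)$, identifies $G$-displays of type $\mu$ over $\u\WW(A)$ with those over $\u\WW(B/A)$. Composed with base change along $\u\WW(B)\to\u\WW(B/A)$, this shows that lifts of $\u M_A$ over $\u\WW(B)$ are classified by the lifts of the Hodge filtration inside the module underlying the $G$-display over $\u\WW(B/A)$, which we identify with $H^2_{\mathrm{crys}}(X_A/B)$; these lifts are constrained to be isotropic of the correct rank, with a compatible complement.

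On the geometric side, the deformation theory of schemes of K3 type, generalising the classical picture for K3 surfaces, says that a lift $X_B/B$ of $X_A/A$ is uniquely determined by its Hodge filtration $\Fil^1 H^2_{\mathrm{dR}}(X_B/B)\subset H^2_{\mathrm{crys}}(X_A/B)$, and that this filtration is automatically isotropic for the quadratic form (by Poincar\'e duality for K3 surfaces, by axiom in the broader K3 type class). Conversely, every isotropic lift of the Hodge filtration is the Hodge filtration of a unique lift $X_B$. Combining this with the previous paragraph gives a bijection between lifts of $X_A$ and lifts of $\u M_A$; the bijection is manifestly functorial in $B\twoheadrightarrow A$ since on both sides it is realised by the common datum of an isotropic lift of a line in the crystal.

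The main obstacle is the identification, in the first step above, of the module underlying the $G$-display over $\u\WW(B/A)$ with the relative de Rham cohomology $H^2_{\mathrm{crys}}(X_A/B)$ in a way that carries the $\mu$-filtration produced by the $G$-display equivalence to the Hodge filtration of a lift $X_B$. This is a base-change compatibility for Langer-Zink's crystal of displays along $\u\WW(B)\to\u\WW(B/A)$, together with the assertion that the Hodge filtration of a K3 type scheme is isotropic for its quadratic form. The ordinary hypothesis in the previous work of Langer-Zink was exactly what permitted an elementary handling of this step; Proposition \ref{Pr:Intro} removes it, and once it is in place the theorem follows by formal composition.
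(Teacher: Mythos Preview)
Your proposal is correct and follows essentially the same route as the paper: reduce by induction to a square-zero (hence nilpotent PD) thickening $B\to A$, use the unique lifting lemma (Proposition~\ref{Pr:Intro}) for the minuscule orthogonal display datum to pass from $\u\WW(A)$ to $\u\WW(B/A)$, then identify lifts to $\u\WW(B)$ with self-dual lifts of the Hodge filtration via Corollary~\ref{Co:Hodge-lift-orth}, and match these with geometric lifts via the Langer--Zink deformation result for schemes of K3 type. The only cosmetic difference is that the paper phrases the Hodge-lift step in the language of orthogonal displays rather than $G$-displays, and cites \cite[Corollary~32]{Langer-Zink:K3} (using the Beauville--Bogomolov form) for the geometric side rather than treating isotropy as an axiom.
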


Let us briefly explain the new aspect of the proof given here.
The Frobenius type of the second crystalline cohomology of a scheme of
K3 type corresponds (after a twist) to the cocharacter $\mu=(1,0,\ldots,0,-1)$
of the group $\GL_n$, which is not minuscule, 
so the deformation theory of higher displays of type $\mu$ does not work well.
But $\mu$ is minuscule as a cocharacter 
of a suitable orthogonal group $G\subset\GL_n$,
and $G$-displays correspond to displays with a non-degenerate quadratic form.
Therefore Proposition \ref{Pr:Intro} can be applied.

\subsection*{Terminology and Notation.}

We fix a prime $p$.
All rings are commutative with a unit.
A ring or an abelian group is called $p$-adic 
if it is $p$-adically complete and separated.
All divided power thickenings (PD thickenings) of rings will be assumed
to be compatible with the natural divided powers of $p$.
An ideal $I$ of a ring $R$ is called bounded nilpotent if there is an $n$
such that $x^n=0$ for all $x\in I$.
An endomorphism $f$ of an abelian group $A$ is called pointwise nilpotent
if for each $a\in A$ there is an $n$ with $f^n(a)=0$. 
If $u:R\to S$ is a ring homomorphism, for an $R$-module $M$
we write $M^u=M\otimes_{R,u}S$. 
If $\alpha:G\to H$ is a homomorphism of groups in a topos and
$P$ is a (right) $G$-torsor we denote by $P^\alpha$ the associated $H$-torsor.
%


\setcounter{tocdepth}{1}
\tableofcontents

\section{Higher frames}

Let $p$ be a prime. In the following, frame means higher frame, while the frames of \cite{Zink:Windows, Lau:Frames} and their variants will be called 1-frames.


\begin{Defn}
\label{Def:frame}
A pre-frame $\u S=(S,\sigma,\tau)$ consists of a $\ZZ$-graded ring 
\[
S=\bigoplus_{n\in\ZZ} S_n
\]
and ring homomorphisms $\sigma,\tau:S\to S_0$.
Let $\sigma_n,\tau_n:S_n\to S_0$ denote the restrictions of $\sigma,\tau$.
The pre-frame $\u S$ is called a frame if the following holds.
\begin{enumerate}
\item
\label{It:frame-a}
The endomorphism $\tau_0$ is the identity of $S_0$, 
and $\tau_{-n}:S_{-n}\to S_0$ is bijective for $n\ge 1$.
Let $t\in S_{-1}$ be the unique element with $\tau_{-1}(t)=1$.
\item
\label{It:frame-b}
The endomorphism $\sigma_0$ of $S_0$ is a Frobenius lift,
and $\sigma_{-1}(t)=p$.
\item
\label{It:frame-c}
We have $p\in\Rad(S_0)$, the Jacobson radical of $S_0$.
\end{enumerate}
We say that $\u S$ is a frame for $R=S_0/tS_1$.
A homomorphism of pre-frames is a homomorphism of graded rings 
that commutes with $\sigma$ and $\tau$.
\end{Defn}



\begin{Remark} 
\label{Rk:frame-recover}
The frame axioms imply that $S_{\le 0}=S_0[t]$ is the polynomial ring
with variable $t$.
A frame $\u S$ is determined by the graded ring $S_{\ge 0}$ 
together with the ring homomorphism $\sigma:S_{\ge 0}\to S_0$ 
and the maps $t:S_{n+1}\to S_{n}$ for $n\ge 0$ 
subject to the following axioms.
\begin{enumerate}
\item
\label{It:frame-recover a}
The homomorphism $t:S_{\ge 1}\to S_{\ge 0}$ is $S_{\ge 0}$-linear.
\item
\label{It:frame-b}
$\sigma_0:S_0\to S_0$ is a Frobenius lift, and
$\sigma_n(t(a))=p\sigma_{n+1}(a)$ for $a\in S_{n+1}$.
\item
\label{It:frame-c}
We have $p\in\Rad(S_0)$.
\end{enumerate}
One recovers $\tau_n(a)=t^n(a)$.
In the examples of frames we will often define the triple
$(S_{\ge 0},\sigma,t)$,
but the definition of displays over a frame is more easily
expressed in terms of the data $(S,\sigma,\tau)$.
\end{Remark}

\begin{Remark}
\label{Rk:tau}
If $\u S$ is a frame for $R$,
the homomorphism 
$\tau$ induces an isomorphism $S/(t-1)S\cong R$.
\end{Remark}

\begin{Remark}
\label{Rk:S0pRp-nil}
If $\u S$ is a frame for $R$,
each $a$ in the kernel of $S_0/p\to R/p$ satisfies $a^p=0$
since $\sigma(ty)=p\sigma_1(y)$ for $y\in S_1$.
\end{Remark}


%

\begin{Remark}
\label{Rk:O-frames}
Let $\OOO$ be the ring of integers in a finite extension of $\QQ_p$ 
with residue field $\FF_q$ and with a fixed prime element $\pi\in\OOO$.
One can define $\OOO$-frames by the following modification 
of Definition \ref{Def:frame}: $S$ is a graded $\OOO$-algebra, 
$\sigma_0$ is a lift of the $q$-Frobenius,
and $\sigma_{-1}(t)=\pi$.
This will not be carried out in the following, 
but see Remarks \ref{Rk:frames-WO} \& \ref{Rk:G-disp-O}
and Example \ref{Ex:G-zip}.
\end{Remark}

We refer to \S\ref{Subse:DisplayGauge} for the relation between frames and $\varphi$-gauges.

\subsection{Examples of frames}

\begin{Example}[Frame associated to a filtration with Frobenius divisibility]
\label{Ex:frame-torsion-free}
If $A$ is a $p$-torsion free ring with $p\in\Rad(A)$,
equipped with a Frobenius lift $\sigma_0:A\to A$
and a descending sequence of ideals
$A=J_0\supseteq J_1\supseteq J_2\supseteq\ldots$ 
such that $J_nJ_m\subseteq J_{n+m}$ and $\sigma_0(J_n)\subseteq p^nA$,
we obtain a frame $\u S$ for $R=A/J_1$ as follows.
For $n\ge 0$ we set $S_n=J_n$ and $\sigma_n=p^{-n}\sigma_0$, 
and $t:S_{n+1}\to S_n$ is the inclusion.
This determines $\u S$ by Remark \ref{Rk:frame-recover}.

Explicitly, $S$ is the ring of all Laurent polynomials 
$\sum a_{n}t^{-n}\in A[t,t^{-1}]$ 
such that $a_{n}\in J_n$ for $n\ge 1$, 
with the structure of a $\ZZ$-graded $A$-algebra
such that $\deg(t)=-1$.
The homomorphisms $\sigma,\tau:S\to S_0=A$
extend $\sigma_0$ and $\tau_0=\id$ by $\sigma(t)=p$ and $\tau(t)=1$.
\end{Example}

\begin{Example}[Tautological frame]
\label{Ex:frame-tautological}
Let $A$ be a ring with $p\in\Rad(A)$ 
and with a Frobenius lift $\sigma_0:A\to A$. 
There is a unique frame $\u S$ with $S_0=A$ 
and $S_{n}=0$ for $n>0$ such that
$\sigma$ extends the given $\sigma_0$,
namely $S=A[t]$.
\end{Example}

For a ring $R$ let $W(R)$ be the ring of $p$-typical Witt vectors
with Frobenius $\sigma$ and Verschiebung $v$, and let $I(R)=v(W(R))$ so that $W(R)/I(R)\cong R$.

\begin{Example}[Witt frame]
\label{Ex:frame-W}
For a $p$-adic ring $R$ we define the Witt frame $\u S=\u W(R)$ as follows,
using again Remark \ref{Rk:frame-recover}.
We set $S_0=W(R)$, and $S_n=I(R)=v(W(R))$ as an $S_0$-module for $n\ge 1$.
For $n,m\ge 1$ the multiplication map $S_n\times S_m\to S_{n+m}$
is given by 
\[
I(R)\times I(R)\to I(R),\qquad (v(a), v(b))\mapsto v(ab).
\]
The homomorphism $t:S_1\to S_0$ is the inclusion $I(R)\to W(R)$,
and $t:S_{n+1}\to S_n$ is the multiplication $p:I(R)\to I(R)$ for $n\ge 1$. 
The endmorphism $\sigma_0$ of $W(R)$ is the Witt vector Frobenius,
and we set $\sigma_n(v(a))=a$ for $n\ge 1$.
The axioms of Remark \ref{Rk:frame-recover} are easily verified;
the condition $p\in\Rad(W(R))$ holds since $W(R)$ is $p$-adic
by \cite[Prop.\ 3]{Zink:Display}.
$\u W(R)$ is a frame for $R$, which is functorial in $R$.
\end{Example}

\begin{Remark}
The pre-frame $\u W(R)$ can be defined for an arbitrary ring $R$,
but the condition $p\in\Rad(R)$ does not hold in general.
%
\end{Remark}

\begin{Remark}
\label{Rk:frame-W-tf}
If $R$ is a $p$-adic ring such that $W(R)$ is $p$-torsion free,
the Witt frame $\u W(R)$ coincides with the frame 
of Example \ref{Ex:frame-torsion-free} 
for $A=W(R)$ and $J_n=p^{n-1}I(R)=I(R)^n$. 
The last equation holds since $v(a)v(b)=pv(ab)$.
\end{Remark}

\begin{Example}[Truncated Witt frame]
\label{Ex:frame-Wm}
For an $\FF_p$-algebra $R$ and $m\ge 1$
we can define a truncated Witt frame $\u S=\uW m(R)$
by $S_0=W_m(R)$ and $S_n=I_{m+1}(R)=v(W_{m+1}(R))$ for $n\ge 1$,
with the unique frame structure such that the natural surjective map 
$\u W(R)\to\uW m(R)$ is a homomorphism of frames.
\end{Example}

\begin{Example}[Zip frame]
\label{Ex:frame-W1}
For an $\FF_p$-algebra $R$, the frame $\u S=\uW 1(R)$ 
will also be called the zip frame of $R$;
this terminology is explained by Example~\ref{Ex:disp-W1-zip} below.
Explicitly $\u S$ looks as follows.
We have $S_n=R$ for all $n$, thus $S=R^{(\ZZ)}$ as an abelian group.
The multiplication $S_n\times S_m\to S_{n+m}$ is zero if $n<0$ and $m>0$,
it is the usual multiplication of $R$ if $n,m\le 0$ or $n,m\ge 1$, 
and it is given by $(a,b)\mapsto a^pb$ if $n=0$ and $m\ge 1$.
We can identify
\begin{equation}
\label{Eq:W1}
S=R[t]\times_{\Frob,R}R[u]=\{(f,g)\in R[t]\times R[u]\mid f(0)^p=g(0)\}
\end{equation}
by sending $(a)\in R^{(\ZZ)}$ to $f=\sum_{n\ge 0}a_{-n}t^n$ and
$g=a_0^p+\sum_{n\ge 1}a_nu^n$. 
Then $\sigma,\tau:S\to R$ are given by 
$\tau((f,g))=f(1)$ and $\sigma((f,g))=g(1)$.
\end{Example}

\begin{Remark}
\label{Rk:frame-W1-final}
For an $\FF_p$-algebra $R$,
the zip frame $\uW 1(R)$ is a final object in the category of frames for $R$.
Namely, let $\u S$ be a frame for $R$ and let $\pi_0:S_0\to R$ 
denote the projection.
For $n\ge 1$ define $\pi_{-n}:S_{-n}\to R$ and $\pi_n:S_n\to R$ by 
$\pi_{-n}(a)=\pi_0(\tau_{-n}(a))$
and $\pi_n(b)=\pi_0(\sigma_n(b))$.
Then $\pi$ is the unique frame homomorphism $\u S\to\uW 1(R)$
over the identity of $R$.
Similarly, if $\u S'$ is a frame for a ring $R'$
then $\Hom(R',R)=\Hom(\u S',\uW 1(R))$.
\end{Remark}

\begin{Example}[Relative Witt frame]
\label{Ex:frame-W-rel}
For a PD thickening of $p$-adic rings $B\to A=B/J$
we define the relative Witt frame $\u S=\u W(B/A)$ as follows,
using again Remark \ref{Rk:frame-recover}.
Set $S_0=W(B)$ and 
$S_n=I(B)\oplus J$ as an $S_0$-module for $n\ge 1$,
where $S_0$ acts on $J$ by the projection $S_0\to B$.
For $n,m\ge 1$, the multiplication $S_n\times S_m\to S_{n+m}$ 
is defined by
\[
(v(a)+x)(v(b)+y)=v(ab)+xy
\]
for $a,b\in W(B)$ and $x,y\in J$.
We recall 
that the divided powers on $J$ induce an isomorphism 
\begin{equation}
\label{Eq:log}
\log:W(J)\to J^\NN
\end{equation}
as in \cite[\S 1.4]{Zink:Display}.
Define
\[
t:S_1\to S_0,\qquad
t(v(a)+x)=v(a)+\log^{-1}(x,0,0,\ldots)
\]
and
\[
t:S_n\to S_{n-1},\qquad
t(v(a)+x)= pv(a)+x
\]
for $n\ge 2$. 
We note that $t:S_1\to I(B/A)=\Ker(W(B)\to A)$ is bijective.
The endomorphism $\sigma_0$ of $W(B)$ is the Witt vector Frobenius,
and for $n\ge 1$ we set $\sigma_n(v(a)+x)=a$ for $a\in W(B)$ and $x\in J$.
The axioms of Remark \ref{Rk:frame-recover} are easily verified.
$\u W(B/A)$ is a frame for $A$, which is functorial in 
the PD thickening $B\to A$.
For $A=B$ we have $\u W(B/A)=\u W(A)$.
\end{Example}

\begin{Remark}
If $W(B)$ is $p$-torsion free, the frame $\u W(B/A)$ coincides with the
frame of Example \ref{Ex:frame-torsion-free} 
for $A=W(B)$ and $J_n=p^{n-1}I(B)\oplus J$
where $J$ is viewed as an ideal of $W(B)$ 
by $x\mapsto\log^{-1}(x,0,0,\ldots)$;
cf.\ Remark \ref{Rk:frame-W-tf}.
\end{Remark}

\begin{Remark}
If $B\to A$ is a PD thickening of $\FF_p$-algebras,
one can also define a truncated version $\uW m(B/A)$
of the relative Witt frame.
\end{Remark}

\begin{Remark}
[Minimal frames]
\label{Rk:Smin}
Let us call a frame $\u S$ minimal if $S_{\ge 0}$ is generated by $S_1$ 
as an $S_0$-algebra. 
For any frame $\u S$ we define a sub-frame $\u S'=\u S^{\min}\subseteq\u S$ as follows.
For $n\le 1$ let $S'_n=S_n$ and for $n\ge 2$ let $S'_n\subseteq S_n$ 
be the image of the multiplication map $S_1\otimes\ldots\otimes S_1\to S_n$.
The functor
$\u S\mapsto\u S^{\min}$ is right adjoint to the inclusion functor from minimal frames to frames.
We have $\u W(R)=\u W(R)^{\min}$, but $\u W(B/A)\ne\u W(B/A)^{\min}$ in general.
\end{Remark}


\begin{Example}
[Zink frame]
\label{Ex:frame-WW}
Let $R$ be a local ring such that $\Fm_R$ is bounded nilpotent and
the residue field $k=R/\Fm$ is perfect of characterstic $p$.
Zink \cite{Zink:Dieudonne} defines a subring 
$\WW(R)=W(k)\oplus\hat W(\Fm)$ of $W(R)$,
here we use the notation of \cite{Lau:Relations}.
We call the ring $R$ \emph{admissible} if $p\ge 3$ or $pR=0$;
this is more general than in the introduction.
In this case the ring $\WW(R)$ gives rise to a subframe 
$\u\WW(R)$ of $\u W(R)$
consisting of $\WW(R)\subseteq W(R)$ in degree $0$ and 
$\II(R)=\WW(R)\cap I(R)\subseteq I(R)$ in each degree $n\ge 1$.
The admissibility implies that $\sigma_1:\II(R)\to W(R)$
has image in $\WW(R)$ as required.
We have $p\in\Rad(\WW(R))$ since $\WW(R)$ is $p$-adic;
see \cite[\S 1F]{Lau:Relations}.
The frame $\u\WW(R)$ is functorial in $R$.
\end{Example}

\begin{Example}[Relative Zink frame]
\label{Ex:frame-WW-rel}
If $B\to A=B/J$ is a PD thickening of admissible rings 
as in Example \ref{Ex:frame-WW}
with nilpotent divided powers, there is a subframe
$\u\WW(B/A)$ of $\u W(B/A)$ consisting of $\WW(B)\subseteq W(B)$
in degree $0$ and $\II(B)\oplus J\subseteq I(B)\oplus J$ 
in each degree $n\ge 1$.
Indeed, for nilpotent divided powers the isomorphism 
$\log$ of \eqref{Eq:log} induces an isomorphism 
$\hat W(J)\cong J^{(\NN)}$,
so the homomorphism
$t:\II(B)\oplus J\to W(B)$ has image in $\WW(B)$ as required.
The frame $\u\WW(B/A)$ is functorial in the PD thickening $B\to A$.
For $A=B$ we have $\u\WW(B/A)=\u\WW(A)$.
\end{Example}

\begin{Remark}
\label{Rk:frames-WO}
Let $\OOO$, $q$, $\pi$ be as in Remark \ref{Rk:O-frames}.
All frames related to the Witt vectors have obvious variants
for Drinfeld's $\OOO$-Witt vectors $W_\OOO$ \cite{Drinfeld:Coverings}.
More precisely, 
for a $\pi$-adic $\OOO$-algebra there is an $\OOO$-frame $\u W{}_\OOO(R)$, 
for an $\OOO$-PD thickening of $\pi$-adic $\OOO$-algebras
$B\to A$ there is an $\OOO$-frame $\u W{}_\OOO(B/A)$,
for an $\FF_q$-algebra $R$ there are truncated $\OOO$-frames $\uW {\OOO,m}(R)$,
and for a local Artin $\OOO$-algebra $R$ there is an $\OOO$-frame
$\u\WW{}_\OOO(R)$, with a relative version for a nilpotent $\OOO$-PD thickening. 
For an $\FF_q$-algebra $R$ the $q$-zip frame $\uW {\OOO,1}(R)$
is described as in Example \ref{Ex:frame-W1} with $q$ in place of $p$.
\end{Remark}

\subsection{1-frames and Verj\"ungung}

One source of (higher) frames are $1$-frames with Verj\"ungung as
defined in \cite{Langer-Zink:K3} with some mild modifications.

\begin{Defn}
\label{Def:1-frame}
A $1$-frame $\SSS=(S_0\supset I,\sigma_0,\dot\sigma)$ consists of a ring $S_0$,
an ideal $I\subset S_0$, 
a ring endomorphism $\sigma_0$ of $S_0$, 
and a $\sigma_0$-linear map $\dot\sigma:I\to S_0$
with the following properties.
\begin{enumerate}
\item
$\sigma_0:S_0\to S_0$ is a Frobenius lift.
\item
$\sigma_0(a)=p\dot\sigma(a)$ for $a\in I$.
\item
$p\in\Rad(S_0)$.
\end{enumerate}
We say that $\SSS$ is a $1$-frame for $R=S_0/I$.
\end{Defn}

A $1$-frame 
is a frame in the sense of \cite{Lau:Frames} with $\theta=p$ 
but without surjectivity condition on $\dot\sigma$; 
see \cite[Remark 2.11]{Lau:Frames}.
We note that $I\subseteq\Rad(S_0)$ as required in 
\cite[Def.\ 2.1]{Lau:Frames}
since the existence of $\dot\sigma$ implies that
the image of $I\to S_0/p$ is a nil-ideal.

For a frame $\u S$ such that $t:S_1\to S_0$ is injective,
a $1$-frame $\SSS$ is defined by $I=tS_1$ and 
$\dot\sigma(ta)=\sigma_1(a)$ for $a\in S_1$.
We say that the frame $\u S$ extends the $1$-frame $\SSS$.

The following is a variant of a definition in \cite{Langer-Zink:K3}.

\begin{Defn}
\label{Def:Verj}
Let $\SSS$ be a 1-frame. 
A structure of Verj\"ungung on $\SSS$ is a pair $(\nu,\pi)$ where
$\nu:I\otimes_{S_0}I\to I$ and $\pi:I\to I$ are $S_0$-linear maps 
such that the following holds. We write $\nu(a\otimes b)=a*b$.
\begin{enumerate}
\item
$\nu$ is commutative and associative,
\item
$\pi(a*b)=ab=\pi(a)*b$ for $a,b\in I$,
\item
$\dot\sigma(a*b)=\dot\sigma(a)*\dot\sigma(b)$ for $a,b\in I$,
\item
$\dot\sigma(\pi(a))=\sigma_0(a)$ for $a\in I$.
\end{enumerate}
\end{Defn}

\begin{Const}
\label{Const:1fr2fr}
Let $\SSS=(S_0\supset I,\sigma_0,\dot\sigma)$ 
be a 1-frame with Verj\"ungung $(\nu,\pi)$.
We construct a frame $\u S$ that extends $\SSS$,
using Remark \ref{Rk:frame-recover}.
Necessarily $t:S_1\to S_0$ is the inclusion map $I\to S_0$.
For $n\ge 2$ we set $S_n=I$ as an $S_0$-module, 
and $t:S_n\to S_{n-1}$ is equal to $\pi$.
For $n,m\ge 1$ the multiplication $S_n\times S_m\to S_{n+m}$ 
is given by $\nu$,
and $\sigma_n(a)=\dot\sigma(a)$ for $n\ge 1$.
The axioms of Remark \ref{Rk:frame-recover} 
are direct consequences of Definition \ref{Def:Verj}.
\end{Const}

\begin{Example}
The frames $\u W(R)$, $\u W(B/A)$, $\u\WW(R)$, $\u\WW(B/A)$
arise from 1-frames with Verj\"ungung using this construction.
More precisely, 
for $\u W(R)$ the Verj\"ungung on the ideal $I=I(R)$ is defined
by 
\[
v(a)*v(b)=v(ab),\qquad
\pi(v(a))=pv(a)
\]
for for $a,b\in W(R)$.
For $\u W(B/A)$ with $A=B/J$ the Verj\"ungung on the ideal 
$I=I(B/A)=I(B)\oplus J$ is defined by 
\[
(v(a)+x)*(v(b)+y)=v(ab)+xy,\qquad
\pi(v(a)+x)=pv(a)+x
\]
for $a,b\in W(R)$ and $x,y\in J$.
The Verj\"ungung for $\WW(R)$ and $\WW(B/A)$ is obtained by restriction.
\end{Example}

\section{Graded modules and displays}

Let $S$ be a $\ZZ$-graded ring.
We denote by $\Mod_S^{\gr}$ the category of $\ZZ$-graded $S$-modules
and by $\Hom_S^0(M,N)$ the homomorphisms in this category,
while $\Hom_S(M,N)$ denotes homomorphisms of $S$-modules.
For graded $S$-modules $M$ and $N$ the tensor product $M\otimes_SN$
carries a natural grading, and the category $\Mod_S^{\gr}$ 
has an internal hom given by 
\[
\Hom_S^*(M,N)=\bigoplus_n\Hom_S^n(M,N)
\]
where $\Hom_S^d(M,N)=\Hom_S^0(M,N(d))$ and
$N(d)_n=N_{d+n}$.
We have $\Hom_S^*(M,N)\subseteq\Hom_S(M,N)$,
with equality when $M$ is a finite $S$-module.


\begin{Lemma}
\label{Le:proj-mod-gr}
A graded $S$-module $M$ is projective as an $S$-module iff it is
projective in the category of graded $S$-modules. 
\end{Lemma}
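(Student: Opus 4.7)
The plan is to prove both implications by comparing graded and ungraded decompositions. The key observation is that for any tuple $(d_i)$ the graded free module $F = \bigoplus_i S(d_i)$ is simultaneously free as an ordinary $S$-module (of rank $\#\{i\}$), so projectivity with respect to it transfers between the two settings. I will set up a graded surjection $\pi : F \to M$ from such an $F$ and split it in the appropriate category.

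For the direction ``projective in $\Mod_S^{\gr}$ implies projective as $S$-module,'' I choose any graded surjection $\pi : F \twoheadrightarrow M$ with $F = \bigoplus_i S(d_i)$ and lift the identity of $M$ to a degree-$0$ section in $\Mod_S^{\gr}$. This exhibits $M$ as a graded direct summand of $F$, hence also as an ungraded direct summand of the free $S$-module $F$, so $M$ is $S$-projective.

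For the converse, I again pick a graded surjection $\pi : F \twoheadrightarrow M$ with $F = \bigoplus_i S(d_i)$. Since $M$ is $S$-projective, $\pi$ admits an $S$-linear section $s : M \to F$, which need not be degree preserving. The trick is to decompose $s$ into graded components: for homogeneous $m \in M_k$ the element $s(m) \in F$ has finite graded support, so writing $s(m) = \sum_n s_n(m)$ with $s_n(m) \in F_{k+n}$ defines maps $s_n : M \to F$ of degree $n$ for $n \in \ZZ$. Each $s_n$ is $S$-linear: for $a \in S_j$ and $m \in M_k$ homogeneous, the identity $s(am)=a\,s(m)$ splits according to the ambient grading of $F$ into the equalities $s_n(am) = a\,s_n(m)$ in $F_{j+k+n}$. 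Applying the degree-$0$ map $\pi$ to $s=\sum_n s_n$ and extracting the degree-$0$ component of $\pi\circ s = \id_M$ yields $\pi\circ s_0 = \id_M$. Thus $s_0$ is a graded section of $\pi$, so $M$ is a graded direct summand of $F$ and therefore projective in $\Mod_S^{\gr}$.

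The only non-routine step is the graded decomposition of the ungraded section $s$; the finite-support property of homogeneous elements of $F$ makes this well defined, and $S$-linearity of the individual components follows formally from matching degrees. Everything else reduces to the observation that a shifted free module $S(d)$ is free of rank one as an ungraded $S$-module, so I do not expect a real obstacle.
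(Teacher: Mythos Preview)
Your proof is correct and follows essentially the same approach as the paper. For the direction ``projective as module implies projective as graded module,'' the paper phrases it functorially---$\Hom_S^0(M,-)$ is an $S_0$-direct summand of $\Hom_S(M,-)$ and hence exact---but your explicit extraction of $s_0$ from $s$ is precisely the retraction $\Hom_S(M,F)\to\Hom_S^0(M,F)$ underlying that claim; the other direction is identical to the paper's.
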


\begin{proof}
If $M$ is projective as a module then also as a graded module
because $\Hom_S^{0}(M,N)$ is a direct summand of $\Hom_S(M,N)$
as an $S_0$-module, thus exact as a functor of $N$.
If $M$ is projective as a graded module, then $M$ is a direct summand
of a direct sum $N=\bigoplus_iS(d_i)$ as a graded module, 
and $N$ is a free $S$-module, thus $M$ is projective as a module.
\end{proof}

For a finite projective graded $S$-module $M$ we define $M^*=\Hom(M,S)$
as a graded $S$-module.
Then the composition map $M^*\otimes_S N\to\Hom_S(M,N)$ is bijective for 
every graded $S$-module $N$.

\subsection{Type of projective graded modules}

%
In the following let $\u S=(S,\sigma,\tau)$ be a frame for $R$, 
thus $R=S_0/tS_1$.
Let 
\begin{equation}
\label{Eq:rho}
\rho:S\to R
\end{equation}
be the ring homomorphism which extends the projection $S_0\to R$
by zero on all $S_n$ with $n\ne 0$. 
Then $\rho$ identifies $S/(S_{<0}+S_{>0})$ with $R$.
If $R$ is considered as a graded ring concentrated in degree zero, 
$\rho$ is a homomorphism of graded rings.
%
We have the following version of Nakayama's lemma.

\begin{Lemma}
\label{Le:NAK}
If $M$ is a finite graded $S$-module,
$M\otimes_{S,\rho}R=0$ implies $M=0$.
\end{Lemma}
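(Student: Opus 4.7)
My plan is to adapt the classical determinant/adjoint proof of Nakayama's lemma to the graded setting. Set $\Fa=tS_1\subseteq S_0$; then $\ker\rho=\Fa\oplus\bigoplus_{n\ne 0}S_n$ as a graded ideal, and the hypothesis reads $M=(\ker\rho)M$.

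Choose homogeneous generators $m_1,\ldots,m_r$ of $M$ with $\deg m_i=d_i$. Expanding the relation $m_i\in(\ker\rho)M$ in the generators gives $m_i=\sum_j c_{ij}m_j$ with $c_{ij}\in\ker\rho$, and extracting the degree-$d_i$ component allows us to replace $c_{ij}$ by its homogeneous degree-$(d_i-d_j)$ part $a_{ij}\in(\ker\rho)_{d_i-d_j}$; concretely $a_{ij}\in\Fa$ when $d_i=d_j$ and $a_{ij}\in S_{d_i-d_j}$ when $d_i\ne d_j$. The adjoint identity applied to $(I-A)m=0$ with $A=(a_{ij})$ then yields $\det(I-A)\cdot m_i=0$ for every $i$.

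The key step is the analysis of $\det(I-A)$. A degree count on the Leibniz expansion shows each monomial $\prod_i(I-A)_{i,\sigma(i)}$ has total degree $\sum_i(d_i-d_{\sigma(i)})=0$, so $\det(I-A)\in S_0$; the identity permutation contributes $\prod_i(1-a_{ii})\in 1+\Fa$. For $\sigma\ne\id$ I claim the full contribution lies in $\Fa$. The off-diagonal part $\prod_{\sigma(i)\ne i}a_{i,\sigma(i)}$ has total degree $0$, and grouping its factors by sign of degree factors it as $s_+\cdot s_-\cdot s_0$ with $s_+\in S_E$, $s_-\in S_{-E}$ for some $E\ge 0$ determined by $\sigma$, and $s_0$ a product of $\Fa$-elements (coming from off-diagonal entries with $d_i=d_{\sigma(i)}$). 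Since $S_{-E}=S_0t^E$ and, for $E\ge 1$, the frame definition gives $\tau_E(S_E)=\tau_1(t^{E-1}(S_E))\subseteq\tau_1(S_1)=tS_1=\Fa$, one has $S_E\cdot S_{-E}=S_0\cdot\tau_E(S_E)\subseteq\Fa$; the case $E=0$ is immediate since $\sigma\ne\id$ then forces $s_0$ to be a nonempty product of $\Fa$-elements. Hence every $\sigma\ne\id$ term lies in $\Fa$, and $\det(I-A)\in 1+\Fa$.

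To finish, Remark~\ref{Rk:S0pRp-nil} combined with $p\in\Rad(S_0)$ gives $\Fa\subseteq\Rad(S_0)$: for $a\in\Fa$ one has $a^p\in pS_0$, so $(1-a)(1+a+\cdots+a^{p-1})=1-a^p\in 1+pS_0\subseteq S_0^\times$, forcing $1-a\in S_0^\times$; the same reasoning applies to $1-sa$ for any $s\in S_0$. Therefore $\det(I-A)\in 1+\Fa\subseteq S_0^\times$, and $\det(I-A)\cdot m_i=0$ forces $m_i=0$ for every $i$, proving $M=0$. I expect the main difficulty to be the degree-bookkeeping in the third paragraph, where the specific frame axioms enter precisely through the identity $\tau_E(S_E)\subseteq\Fa$ for $E\ge 1$; everything else is essentially formal.
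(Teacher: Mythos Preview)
Your proof is correct and follows essentially the same route as the paper's: choose homogeneous generators, write $m_i=\sum a_{ij}m_j$ with $a_{ij}\in\ker\rho$ homogeneous of degree $d_i-d_j$, apply the adjoint identity, and use $\Fa=tS_1\subseteq\Rad(S_0)$ (via Remark~\ref{Rk:S0pRp-nil} and $p\in\Rad(S_0)$) to conclude that $\det(I-A)$ is a unit. Your degree-bookkeeping in the third paragraph is correct but heavier than needed: since $\ker\rho$ is an ideal of $S$, every monomial in the Leibniz expansion other than the leading $1$ already lies in $\ker\rho$, and since you have already observed $\det(I-A)\in S_0$, it follows immediately that $\det(I-A)-1\in(\ker\rho)\cap S_0=\Fa$ without any case analysis on permutations or signs of degrees.
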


\begin{proof}
The ideal $I=tS_1=\Ker(S_0\to R)$ 
is contained in the Jacobson ideal $\Rad(S_0)$
because  $p\in\Rad(S_0)$ by definition,
and the kernel of $S_0/p\to R/p$ is a nil-ideal
by Remark \ref{Rk:S0pRp-nil}.
The kernel of $\rho$ is 
$K=\bigoplus_n K_n$ with $K_0=I$ and $K_n=S_n$ for $n\ne 0$.
Let $x_1,\ldots,x_r$ be homogeneous generators of $M$ 
with $x_i$ of degree $d_i$.
We can write $x_i=\sum a_{ij}x_j$ with $a_{ij}\in K$ of degre $d_i-d_j$.
Then $\det(\delta_{ij}-a_{ij})=1-a$ with $a\in K_0$, 
and $(1-a)x_i=0$.
But $(1-a)$ is a unit of $S_0$ since $I\subseteq\Rad(S_0)$, thus $M=0$.
\end{proof}

\begin{Cor}
\label{Co:NAK-proj}
Let $M$ and $N$ be finite graded $S$-modules where $M$ is projective.
A homomorphism of graded $S$-modules $f:N\to M$ is bijective iff its reduction
$\bar f:N\otimes_{S,\rho}R\to M\otimes_{S,\rho}R$ is bijective. 
\end{Cor}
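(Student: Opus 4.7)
The plan is to apply Lemma~\ref{Le:NAK} twice, once to obtain surjectivity and once to obtain injectivity, using the projectivity of $M$ in between. The forward implication is immediate since $-\otimes_{S,\rho}R$ is a functor, so I concentrate on the converse.

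First I would handle surjectivity. Let $C=\Coker(f)$, which is a finite graded $S$-module because $M$ is. Right-exactness of $-\otimes_{S,\rho}R$ identifies $C\otimes_{S,\rho}R$ with $\Coker(\bar f)$, which vanishes by hypothesis. Lemma~\ref{Le:NAK} then gives $C=0$, so $f$ is surjective.

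Next I would handle injectivity by exploiting that $M$ is projective. By Lemma~\ref{Le:proj-mod-gr}, $M$ is projective in $\Mod_S^{\gr}$, so the surjection $f:N\to M$ admits a splitting in $\Mod_S^{\gr}$. Setting $K=\Ker(f)$, this yields a decomposition $N\cong M\oplus K$ of graded $S$-modules. Since $N$ is finite and $K$ is a direct summand of $N$, $K$ is a finite graded $S$-module. Base change along $\rho$ preserves the direct sum, giving $N\otimes_{S,\rho}R\cong(M\otimes_{S,\rho}R)\oplus(K\otimes_{S,\rho}R)$, under which $\bar f$ acts as the identity on the first factor and as zero on the second. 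In particular $\Ker(\bar f)=K\otimes_{S,\rho}R$, which vanishes by the injectivity of $\bar f$. A second application of Lemma~\ref{Le:NAK} forces $K=0$, so $f$ is injective.

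There is no real obstacle here; the only point requiring attention is to produce the splitting in the category of graded modules (so that $K$ is itself graded and finite) rather than in $\Mod_S$, and this is exactly what Lemma~\ref{Le:proj-mod-gr} provides.
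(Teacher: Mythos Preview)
Your proof is correct and follows essentially the same route as the paper's own argument: surjectivity via Lemma~\ref{Le:NAK} applied to $\Coker(f)$, then splitting off $\Ker(f)$ using projectivity of $M$ and applying Lemma~\ref{Le:NAK} again. If anything, you are slightly more careful in invoking Lemma~\ref{Le:proj-mod-gr} to ensure the splitting happens in the graded category.
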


\begin{proof}
If $\bar f$ is bijecitve, $f$ is surjective by Lemma \ref{Le:NAK}, 
thus $N\cong M\oplus \Ker(f)$ as graded modules
such that $f$ is the projection to the first factor. 
Then $M'=\Ker(f)$ is a finite module with $M'\otimes_{S,\rho}R=0$, thus
$M'=0$ by Lemma \ref{Le:NAK} again.
\end{proof}

\begin{Defn}
\label{Def:std-proj}
A graded $S$-module is called \emph{standard projective} 
if it takes the form $M=L\otimes_{S_0}S$ for a finite projective
graded $S_0$-module $L$.
\end{Defn}

More explicitly, we have $L=\bigoplus_{i} L_i$ 
with finite projective $S_0$-modules $L_i$ which are almost all zero,
and $L\otimes_{S_0}S=\bigoplus_i L_i\otimes_{S_0}S(-i)$.
Every standard projective graded $S$-module is a
finite projective graded $S$-module.

\begin{Lemma}
\label{Le:proj-mod-std}
Assume that all finite projective $R$-modules lift to $S_0$.
Then every finite projective graded $S$-module is standard projective.
\end{Lemma}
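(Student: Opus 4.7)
The plan is to reduce $M$ modulo $\Ker(\rho)$, lift the resulting graded $R$-module to a standard projective graded $S$-module $N$, construct a comparison map $f \colon N \to M$, and then invoke Corollary \ref{Co:NAK-proj} to conclude that $f$ is a graded isomorphism.

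First I would form $\bar M = M \otimes_{S,\rho} R$. Since $M$ is finite projective as a graded $S$-module (hence a direct summand of some $\bigoplus_j S(d_j)$ by Lemma \ref{Le:proj-mod-gr}) and $R$ sits in degree zero, $\bar M$ is a finite projective graded $R$-module that decomposes as $\bigoplus_i \bar M_i$ with each $\bar M_i$ a finite projective $R$-module that vanishes for almost all $i$. By the hypothesis of the lemma, each $\bar M_i$ lifts to a finite projective $S_0$-module $L_i$ equipped with an isomorphism $L_i \otimes_{S_0} R \cong \bar M_i$. Placing $L_i$ in degree $i$ and setting $L = \bigoplus_i L_i$, the graded $S$-module $N = L \otimes_{S_0} S$ is standard projective and satisfies $N \otimes_{S,\rho} R \cong \bar M$ canonically.

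Next I would construct a graded $S$-linear lift $f \colon N \to M$ of this identification. By the tensor–restriction adjunction this amounts to a graded $S_0$-linear map $\varphi \colon L \to M$ whose reduction modulo $\Ker(\rho)$ recovers the chosen isomorphism. In degree $i$ the quotient $M_i \to \bar M_i$ is surjective and $L_i$ is projective over $S_0$, so the composite $L_i \xrightarrow{\sim} \bar M_i$ lifts to an $S_0$-linear map $\varphi_i \colon L_i \to M_i$; taking the direct sum over $i$ produces $\varphi$, and hence $f$.

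By construction the reduction $\bar f$ of $f$ is the chosen isomorphism $\bar N \xrightarrow{\sim} \bar M$, so Corollary \ref{Co:NAK-proj} immediately implies that $f$ itself is bijective, whence $M \cong N$ is standard projective. I do not expect a serious obstacle: the one point worth checking explicitly is the grading-wise finite projectivity of $\bar M$ over $R$, which is a direct consequence of $M$ being a graded summand of a finite free graded $S$-module. Everything else is a packaging of projective-lifting with the graded Nakayama lemma already recorded in Corollary \ref{Co:NAK-proj}.
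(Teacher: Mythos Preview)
Your proof is correct and follows essentially the same approach as the paper: reduce modulo $\rho$, lift the resulting graded $R$-module to a standard projective $N$, build a comparison map, and invoke Corollary~\ref{Co:NAK-proj}. The only cosmetic difference is the direction of the comparison map---the paper lifts $M \to N$ using graded projectivity of $M$, whereas you lift $N \to M$ using projectivity of the $L_i$---and either direction works since both $M$ and $N$ are finite projective.
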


\begin{proof}
Let $M$ be a finite projective graded $S$-module.
The graded $R$-module $\bar L=M\otimes_{S,\rho}R$ is finite projective.
Let $L$ be a lift of $\bar L$ to a graded finite projective $S_0$-module
and let $N=L\otimes_{S_0}S$ as a graded $S$-module.
The isomorphism of graded $R$-modules
$M\otimes_{S,\rho}R\cong N\otimes_{S,\rho}R$
lifts to a map of graded $S$-modules $M\to N$,
which is bijective by Corollary \ref{Co:NAK-proj}.
\end{proof}

\begin{Remark}
\label{Rk:mod-std-p-adic}
If the ring $S_0$ is $p$-adic,
the hypothesis of Lemma \ref{Le:proj-mod-std} is satisfied
because the kernel of $S_0/p\to R/p$ is a nil-ideal by 
Remark \ref{Rk:S0pRp-nil},
and thus the homomorphisms $S_0\to S_0/p\to R/p\leftarrow R$
induce bijective maps on the sets of isomorphism classes
of finite projective modules.
\end{Remark}

\begin{Defn}
\label{Def:type-module}
Let $M$ be a finite projective graded $S$-module and let
$\bar L=M\otimes_{S,\rho}R$ as a finite projective graded
$R$-module.
The graded rank of $M$ is the sequence of ranks $(\rk(\bar L_i))_{i\in\ZZ}$,
where $\rk(\bar L_i)$ is a locally constant function on $\Spec R$.
We will also organize this information as follows.
Let $\mu\in\ZZ^n$ with $\mu_1\ge\ldots\ge \mu_n$.
We say that $M$ has type $\mu$ if $\rk(\bar L_i)$ 
is equal to the multiplicity of $i$ in $\mu$.
Set $\deg(\mu)=\sum_i\mu_i$.
\end{Defn}

For example, $M=\bigoplus_i S(-\mu_i)$ has type $\mu$.

\subsection{Predisplays and displays}

Let $\u S=(S,\sigma,\tau)$ be a frame as before.

\begin{Defn}
A \emph{predisplay} $\u M=(M,F)$ over $\u S$ 
consists of a graded $S$-module $M$
and a $\sigma$-linear map $F:M\to M^\tau$.
The predisplay $\u M$ is a \emph{display} 
if $M$ is a finite projective $S$-module
and $F$ is a $\sigma$-linear isomorphism, i.e.\ the corresponding
$S_0$-linear map $M^\sigma\to M^\tau$ is bijective.
The predisplay $\u M$ is called \emph{effective} if $t^n:M_0\to M_{-n}$ 
is bijective for $n\ge 0$. The type of a display is the type of the
underlying graded module as in Definition \ref{Def:type-module}.
\end{Defn}

\begin{Remark}
We have
$M^\tau=\varinjlim(M_0\xrightarrow t
M_{-1}\xrightarrow tM_{-2}\xrightarrow t\ldots)$;
this follows from Remark \ref{Rk:tau}.
A display $\u M$ is effective iff $M_0\to M^{\tau}$ is bijective.
\end{Remark}

%

\begin{Remark}
For a homomorphism of frames $\u S\to\u S'$ there is a base change functor
of predisplays $\u M=(M,F)\mapsto\u M\otimes_{\u S}\u S'=(M\otimes_SS',F\otimes\sigma')$.
This functor is left adjoint to the restriction of scalars functor 
from $\u S'$-predisplays to $\u S$-predisplays. 
The base change preserves displays.
\end{Remark}

\begin{Remark}
Recall that a graded $S$-module $M$ is projective as a module
iff it is projective as a graded module by Lemma \ref{Le:proj-mod-gr}.
A more restrictive variant of the definition of displays would require 
that $\u M$ is a standard projective graded $S$-module as in  
Definition \ref{Def:std-proj}.
This makes no difference when $S_0$ is $p$-adic;
see Lemma \ref{Le:proj-mod-std} and Remark \ref{Rk:mod-std-p-adic}.
\end{Remark}

\begin{Remark}
Let $M$ be a finite projective graded $S$-module.
A homomorphism $M^\sigma\to M^\tau$ 
is bijective iff it is surjective
because $M^\sigma$ and $M^\tau$ 
are finite projective $S_0$-modules of equal rank 
(as locally constant functions on $\Spec S_0$).
Indeed, if $M$ is standard projective, 
i.e.\ $M=L\otimes_{S_0}S$ for a finite projective graded $S_0$-module $L$,
then $M^\tau=L$ and $M^\sigma=L^{\sigma_0}$, 
and the equality of ranks holds 
since $\sigma_0$ is a Frobenius lift and $p\in\Rad(S_0)$.
In general it suffices to show that 
$M^\sigma\otimes R/p$ and $M^\tau\otimes R/p$ have equal rank.
Using Remark \ref{Rk:frame-W1-final} we can assume that $R=R/p$ 
and that $S=\uW 1(R)$ is the associated zip frame. 
In that case $M$ is standard projective.
\end{Remark}


\begin{Remark}
\label{Rk:disp-Frob-mod}
If $\u M$ is an effective predisplay, 
we can identify $M_0=M^\tau$,
and $F_0$ defines a $\sigma_0$-linear endomorphism of $M^\tau$.
If $\u M$ is an arbitrary display, 
for sufficiently large $d$ we can identify $M_{-d}=M^\tau$,
and $(M^\tau,F_{-d})$ is a finite projective Frobenius module over $S_0$,
with $F_{-d-1}=pF_{-d}$.
\end{Remark}


\begin{Remark}
Predisplays and displays are related to the $\varphi$-gauges of \cite{Fontaine-Jannsen:Frobenius-gauges}.
We refer to \S\ref{Subse:DisplayGauge} for details.
\end{Remark}

\subsection{Normal decomposition and standard data}

Let $L=\bigoplus_iL_i$ be a finite projective graded $S_0$-module
and let
\begin{equation}
\label{Eq:M-std}
M=L\otimes_{S_0}S=\bigoplus_i L_i\otimes_{S_0}S(-i)
\end{equation}
be the associated standard projective graded $S$-module,
thus $L_i\otimes_{S_0}S_j$ lies in degree $i+j$.
We consider $L$ as a submodule of $M$ by
$L\to M$, $x\mapsto x\otimes 1$.
The composition $L\to M\to M^\tau$, $x\mapsto x\otimes 1\mapsto x\otimes 1$  is bijective, and
\[
\Hom_\sigma(M,M^\tau)\xrightarrow\sim\Hom_{\sigma_0}(L,M^\tau)
\cong\Hom_{\sigma_0}(L,L)
\]
by sending $F$ to the restriction $\Phi=F|_L$.
The pair $(M,F)$ is a display iff $\Phi$ is a $\sigma_0$-linear isomorphism.

\begin{Defn}
A standard datum is a pair $(L,\Phi)$ where $L$ is a finite projective
graded $S_0$-module and $\Phi:L\to L$ is a $\sigma_0$-linear isomorphism.
A normal decomposition of a display $\u M=(M,F)$ 
is a finite projective graded $S_0$-module $L\subseteq M$ 
such that $M=L\otimes_{S_0}S$ as a graded $S$-module.
\end{Defn}

A normal decomposition $L$ of a display $\u M$ gives a standard
datum $(L,\Phi)$ with $\Phi=F|_L$, using the identification
$M^{\tau}\cong L$. Conversely, a standard datum
$(L,\Phi)$ gives a display $(M,F)$ defined by $M=L\otimes_{S_0}S$
and $F(x\otimes s)=\sigma(s)\Phi(x)$ for $x\in L$ and $s\in S$.
If each finite projective $R$-module lifts to $S_0$,
then every display has a normal decomposition 
and thus comes from a standard datum;
see Lemma \ref{Le:proj-mod-std}.

\subsection{Change of basis}
\label{Se:disp-change-basis}

Let $M$ be a finite projective graded $S$-module. 
We consider the set $X$ of $\sigma$-linear isomorphisms 
$F:M\to M^\tau$ 
and the group $G=\Aut_S^{0}(M)$ 
of automorphisms of the graded $S$-module $M$. 
Then $G$ acts on $X$ by 
\begin{equation}
\label{Eq:conj-1}
X\times G\to X,\qquad
(F,g)\mapsto (g^\tau)^{-1}\circ F\circ g
\end{equation}
Equivalently, $g$ acts on the linear isomorphism
$F':M^\sigma\to M^\tau$ corresponding to $F$
by $F'\mapsto(g^\tau)^{-1}\circ F'\circ g^\sigma$.
The groupoid of displays with underlying graded $S$-module isomorphic to $M$
is the quotient groupoid $[X/G]$.

The action \eqref{Eq:conj-1} can be made explicit as follows.
For $\mu=(\mu_1,\ldots,\mu_n)\in\ZZ^n$ with $\mu_1\ge\ldots\ge\mu_n$
we consider the graded $S_0$-module $L=\bigoplus_iS_0(-\mu_i)$ and the
associated graded $S$-module
\[
M=L\otimes_{S_0}S=\bigoplus_iS(-\mu_i).
\]
There are natural identifications $M^\tau=L=S_0^n$ and $M^\sigma=L^{\sigma_0}=S_0^n$ as $S_0$-modules,
which gives a bijective map
\[
X=\Isom_{S_0}(M^\sigma,M^\tau)
\cong\GL_n(S_0).
\]
The endomorphisms of $M$ as a graded module are described by matrices:
\begin{equation}
\label{Eq:End-gr-M}
\End_S^{0}(M)\cong
\{A=(a_{ij})\in M_n(S)\mid 
a_{ij}\in S_{\mu_j-\mu_i}\}.
\end{equation}
Here the entries $a_{ij}$ with $\mu_i=\mu_j$ and thus $a_{ij}\in S_0$
form a sequence of square matrices $A_1,\ldots, A_r$, 
where $r$ is the number of different entries in $\mu$,
\[
A=
\left(
\begin{matrix}
A_1 && * \\
& \ddots \\
* && A_r
\end{matrix}
\right).
\]

As an example, for $\mu=(3,2,2,0)$ we have
\[
\End_S^{0}\bigl(S(-3)\oplus S(-2)^2\oplus S(0)\bigr)=
\left(
\begin{matrix}
S_0 & S_{-1} & S_{-1} & S_{-3} \\
S_1 & S_0 & S_0 & S_{-2} \\
S_1 & S_0 & S_0 & S_{-2} \\
S_3 & S_2 & S_2 & S_0
\end{matrix}
\right)
\]
with $r=3$ and square matrices $A_1$, $A_2$, $A_3$ of order $1$, $2$, $1$.

\begin{Lemma}
\label{Le:AutM}
The group $G=\Aut_S^{0}(M)\subseteq\End_S^{0}(M)$ consists of all 
matrices $A$ as in \eqref{Eq:End-gr-M}
such that the diagonal blocks $A_1,\ldots,A_r$ are invertible.
\end{Lemma}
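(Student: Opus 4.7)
The plan is to prove both implications simultaneously by reducing modulo $\rho:S\to R$ and using the Nakayama-type results already established for finite projective graded $S$-modules.

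First I would view $A\in\End_S^0(M)$ as a homomorphism of graded $S$-modules $A:M\to M$. Since $M$ is finite projective, Corollary \ref{Co:NAK-proj} applies: $A$ is an automorphism if and only if its reduction $\bar A:\bar M\to\bar M$ is an automorphism of $\bar M=M\otimes_{S,\rho}R=\bigoplus_i R(-\mu_i)$. The key observation is that $\rho$ kills $S_n$ for every $n\neq 0$; hence every matrix entry $a_{ij}\in S_{\mu_j-\mu_i}$ with $\mu_i\neq\mu_j$ reduces to zero. So $\bar A$ is block diagonal with blocks $\bar A_1,\ldots,\bar A_r$, and $\bar A\in\Aut_R^{0}(\bar M)$ if and only if each $\bar A_k$ is invertible as an endomorphism of a finite projective $R$-module.

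Next I would pass between $S_0$-invertibility and $R$-invertibility of the diagonal blocks. Since $A_k\in M_{n_k}(S_0)$ with $n_k$ the size of the $k$-th block, Nakayama over the local/Jacobson-radical setting applies: recall from the proof of Lemma \ref{Le:NAK} that the kernel of $S_0\to R$ lies in $\Rad(S_0)$ (this uses $p\in\Rad(S_0)$ together with Remark \ref{Rk:S0pRp-nil}). Therefore each $A_k$ is invertible over $S_0$ if and only if $\bar A_k$ is invertible over $R$. Combining this with the previous paragraph yields the claim: $A$ is invertible iff each $\bar A_k$ is iff each $A_k$ is.

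I do not expect a genuine obstacle here; the statement is essentially a bookkeeping consequence of the two Nakayama lemmas already available. The one point worth being explicit about is that the reduction $\bar A$ really is block diagonal with blocks $\bar A_k$, which is purely a grading statement and uses nothing beyond the vanishing of $\rho$ on $S_{\neq 0}$ and the assumption that the entries of $A$ are homogeneous of the prescribed degrees.
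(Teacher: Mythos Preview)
Your proposal is correct and follows essentially the same approach as the paper: reduce via $\rho$ using the Nakayama-type Corollary~\ref{Co:NAK-proj}, observe that $\rho(A)$ is block diagonal because $\rho$ annihilates $S_{\ne 0}$, and then use $\ker(S_0\to R)\subseteq\Rad(S_0)$ to pass between invertibility of $A_k$ over $S_0$ and of $\bar A_k$ over $R$. The paper phrases the first step via Lemma~\ref{Le:NAK} (surjectivity, with surjective $\Leftrightarrow$ bijective) rather than the corollary, but the argument is the same.
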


\begin{proof}
By Lemma \ref{Le:NAK}, a matrix $A$ as in \eqref{Eq:End-gr-M}
defines a surjective endomorphism
of $S^n$ iff its image $\rho(A)$ defines a surjective endomorphism of $R^n$.
In both cases, surjective is equivalent to bijective.
Moreover $\rho(A)$ is the block diagonal matrix with diagonal blocks 
$\rho(A_1),\ldots,\rho(A_r)$.
Thus $A$ is invertible iff $\rho(A)$ is invertible iff all $\rho(A_i)$ are invertible iff all $A_i$ are invertible; 
recall that the kernel of $S_0\to R$ lies in $\Rad(S_0)$
by the proof of Lemma \ref{Le:NAK}.
\end{proof}

The ring homomorphisms $\sigma,\tau:S\to S_0$ 
induce group homomorphisms $\sigma,\tau:G\to\GL_n(S_0)$,
and in terms of matrices the action \eqref{Eq:conj-1}
is given by 
\begin{equation}
\label{Eq:Conj}
X\times G\to X,\qquad
(B,A)\mapsto \tau(A)^{-1}\cdot B\cdot \sigma(A).
\end{equation}
Here $\sigma$ acts on $A=(a_{ij})$ by $a_{ij}\mapsto\sigma_{\mu_j-\mu_i}(a_{ij})$.

\subsection{Exact and tensor structure}

The category of predisplays has an obivous tensor product,
\[
(M,F)\otimes(M',F')=(M\otimes_SM,F\otimes F'),
\]
using the identification 
$(M\otimes_SN)^\tau=M^\tau\otimes_{S_0}N^\tau$
and similarly for $\sigma$.
The tensor product of predisplays preserves displays because 
the tensor product of graded $S$-modules preserves finite projective modules. 
The display $\u S=(S,\sigma)$ is the unit object for the tensor 
product of (pre)displays.
For $d\in\ZZ$ let $\u U(d)=(S(-d),\sigma)$ as a display. 
Then $\u U(d)$ is invertible with inverse $\u U(-d)$.
The type of $\u U(d)$ is $(d)\in\ZZ^1$.

A sequence of displays $0\to \u M'\to\u M\to\u M''\to 0$ is called exact if it is exact in
the abelian category of predisplays, i.e.\ if it is exact as a sequence of $S$-modules.
This makes displays into an exact category, 
which is Karoubian (idempotent complete).
The tensor product of displays preserves exactness.

We define a bilinear form of predisplays $\beta:\u M\times\u N\to\u P$ 
to be a bilinear map of the underlying graded $S$-modules 
$\beta:M\times N\to P$
such that $F(\beta(x,y))=\beta^\tau(F(x),F(y))$
where $\beta^\tau:M^\tau\times N^\tau\to P^\tau$
is the induced bilinear map of $S_0$-modules.
A bilinear form $\beta:\u M\times\u N\to\u P$ 
corresponds to a homomorphism of predisplays
$\u M\otimes \u N\to \u P$.

The dual of a display $\u M=(M,F)$ is the display $\u M^*=(M^*,F^*)$
where $M^*=\Hom_S(M,S)$ as a graded $S$-module, and the isomorphism 
$(M^*)^\sigma\to(M^*)^\tau$ corresponding to $F^*$ is the dual of the
inverse of the isomorphism $M^\sigma\to M^\tau$ corresponding to $F$.
The canonical bilinear form $\gamma:M^*\times M\to S$ is a bilinear form
of displays $\u M^*\times\u M\to\u S$. 

For a display $\u M$ and a predisplay $\u P$ we define 
$\uHom(\u M,\u P)=\u P\otimes\u M^*$; 
this is a display if $\u P$ is a display.
For another predisplay $\u N$ the homomorphism
\[
\Hom(\u N,\u\Hom(\u M,\u P))\to\Hom(\u N\otimes\u M,\u P)
\]
defined by 
$\lambda\mapsto({\id}_{\u P}\otimes\gamma)\circ(\lambda\otimes{\id}_{\u M})$
is bijective.

A bilinear form of displays $\u N\times\u M\to \u S$ is called perfect
if the underlying bilinear form of $S$-modules is perfect,
or equivalently if the corresponding homomorphism of displays
$\u N\to\u M^*$ is an isomorphism.
More generally, 
if $\u P$ is a display of rank one, a bilinear form of displays
$\u N\times\u M\to \u P$ will be called perfect if the corresponding 
homomorphism $\u N\to\u M^*\otimes P$ is an isomorphism.

\subsection{Examples}

Let us record what the general definition of displays gives for specific frames.

\begin{Example}
If $k$ is a perfect field,
displays over the Witt frame $\u W(k)$ are
equivalent to pairs $(N,\Phi)$ where $N$ is a finite free $W(k)$-module
and $\Phi$ is a $\sigma_0$-linear isomorphism of $N[1/p]$.
\end{Example}

Using the relation between displays and $\varphi$-gauges 
as explained in \S\ref{Subse:DisplayGauge}, in particular
Example \ref{Ex:Witt-gauge}, this is the statement of
\cite[Cor.\ 2.2.10]{Fontaine-Jannsen:Frobenius-gauges}.
We include a proof for completeness.

\begin{proof}
The graded ring $S$ is the ring of all $\sum a_nt^{-n}\in W(k)[t,t^{-1}]$
such that $p^n\mid a_n$ for $n\ge 1$, with $\deg(t)=-1$.
Using the elementary divisors theorem 
one verifies that finite projective graded $S$-modules $M$ 
are equivalent to pairs $(N,N')$ 
where $N$ is a finite free $W(k)$-module
and $N'$ is a lattice in $N[1/p]$
by the following functors.
To $M$ one associates the $S_0$-module
$N=M^\tau$ with $N'=\sum_ip^{-i}\Image(M_i\to M^\tau)$,
and to $(N,N')$ one associates the graded $S$-module $M$
with $M_i=N\cap p^iN'$ 
such that $t:M_{i+1}\to M_i$ is the inclusion.

If $M$ corresponds to $(N,N')$ 
we consider $M_i$ as a submodule of $N=M^\tau$.
Then a $\sigma$-linear map $F:M\to M^\tau$
is a system of $\sigma_0$-linear maps $F_i:M_i\to N$ with $pF_{i+1}=F_i$,
which corresponds to a $\sigma_0$-linear map $\Phi:N\to N[1/p]$
with $\Phi(N')\subseteq N$,
defined by $\Phi(x)=F_0(x)$ for $x\in M_0$.
The pair $(M,F)$ is a display iff $\Phi(N')=N$.
Thus $N'$ is determined by $\Phi$, 
and $N'$ exists iff $\Phi$ is bijective on $N[1/p]$.
%
\end{proof}


\begin{Example}
\label{Ex:disp-tautological}
Let $\u S$ be a tautological frame 
as in Example \ref{Ex:frame-tautological},
thus $S=S_0[t]$. 
A display over $\u S$ is equivalent to 
a finite projective $S_0$-module $N$ 
with an exhaustive and separating decreasing filtration 
by direct summands $(N_i)_{i\in\ZZ}$, 
together with $\sigma_0$-linear maps $F_i:N_i\to N$ such that $pF_{i+1}=F_i$
and such that $N$ is generated by the union of all $F_i(N_i)$.
\end{Example}

\begin{Example}
For a 1-frame $\SSS$ with Verj\"ungung $(\nu,\pi)$, 
Langer and Zink \cite{Langer-Zink:K3} define predisplays, displays,
and extended displays.
Let $\u S$ be the frame that extends $\SSS$ as in Construction \ref{Const:1fr2fr},
and let $\u S^{\min}$ be the minimal subframe of $\u S$ as in Remark \ref{Rk:Smin}.
Then an effective predisplay over $\u S^{\min}$ 
is the same as a predisplay over $\SSS$
that satisfies the condition alpha 
of equation (7) of \cite{Langer-Zink:K3}.
Effective displays over $\u S^{\min}$ 
which admit a normal decomposition
are equivalent to displays over $\SSS$,
while effective displays over $\u S$ 
which admit a normal decomposition are equivalant to
extended displays over $\SSS$.
These equivalences preserves the tensor product of displays;
to prove this one verifies that both tensor products represent the
same notion of bilinear maps of displays.
Specific cases are:
\begin{enumerate}
\item
For $\u S=\u W(R)$ or $\u S=\u\WW(R)$ we have $\u S=\u S^{\min}$,
so effective displays over $\u S$ are equivalent 
to displays over $\SSS$. In the case of $\u W(R)$ these
coincide with the higher displays of \cite{Langer-Zink:DRW-Disp}. 
\item
For $\u S=\u W(B/A)$ or $\u S=\u\WW(B/A)$ we have $\u S\ne\u S^{\min}$ in general.
In this case, effective displays over $\u S^{\min}$ 
are equivalent to displays over $\SSS$, 
while effective displays over $\u S$ are equivalent to 
extended displays over $\SSS$.
\end{enumerate}
\end{Example}

\begin{Example}
[Displays over $\uW 1(R)$ and $F$-zips]
\label{Ex:disp-W1-zip}
Let $\u S=\uW 1(R)$ for an $\FF_p$-algebra $R$; 
see Example \ref{Ex:frame-W1} and Remark \ref{Rk:frame-W1-final}.
Displays over $\uW 1(R)$ are equivalent to 
the $F$-zips over $R$ of \cite{Moonen-Wedhorn:Discrete} 
as a tensor category. 
\end{Example}

We recall that an $F$-zip over $R$ is a finite projective $R$-module
$\MMM$ with two filtrations $C^*$ (ascending) and $D_*$ (decending)
by direct summands and an isomorphism $(\gr^*_C)^{(p)}\cong\gr_*^D$.
The above equivalence is a variant of a result of \cite{Schnellinger},
who proves that the modified $F$-zips of \cite{Wedhorn:De-Rham} are
equivalent to $\varphi$-$R$-gauges in the sense of 
\cite{Fontaine-Jannsen:Frobenius-gauges}; 
see \S\ref{Subse:DisplayGauge} and in particular Example \ref{Ex:zip-gauge}.

\begin{proof}
To prove the equivalence between displays and $F$-zips,
let $\CCC_R$ be the category of finite projective graded $S$-modules,
moreover let $\DDD_R$ be the category of collections
\[
(\MMM,C^*,\NNN,D_*,\alpha)
\]
where $\MMM$ and $\NNN$ are finite projective $R$-modules,
$C^*$ is an ascending filtration of $\MMM$ and $D_*$ a decending
filtration of $\NNN$ by direct summands, 
and $\alpha:(\gr^*_C)^{(p)}\cong\gr_*^D$ is
an isomorphism of graded $R$-modules. 
The categories $\CCC_R$ and $\DDD_R$ are equivalent by the following functors.

We define a functor
\[
\CCC_R\to\DDD_R,\qquad M\mapsto (\MMM,C^*,\NNN,D_*,\alpha)
\]
by $\MMM=M^\tau$ and $\NNN=M^\sigma$ where $C^i\subseteq\MMM$ 
is the image of $M_i$ and $D_i\subseteq\NNN$ is generated by the
image of $M_i$. 
The natural surjective maps $M_i^{(p)}\to(\gr^i_C)^{(p)}$
and $M_i^{(p)}\to\gr_i^D$ have the same kernel and therefore
induce the desired isomorphism $\alpha_i:(\gr^i_C)^{(p)}\cong\gr_i^D$.
Indeed, the equality of kernels and the axioms of an object of $\DDD_R$
are easily verified
when $M=L\otimes_RS(-d)$ for a finite projective $R$-module,
and the general case follows because every $M\in\CCC_R$ is a finite direct sum of
such modules. 

Conversely, we define a functor 
\[
\DDD_R\to\CCC_R,\qquad(\MMM,C^*,\NNN,D_*,\alpha)\mapsto M
\]
by $M_i=C^i\times_{\gr_i^D}D_i=
\{(x,y)\in C^i\times D_i\mid\alpha_i(\bar x\otimes 1)=\bar y\text{ in }\gr_i^D\}$.
This gives a graded $S_0$-module $M=\bigoplus_iM_i$. 
The $S$-module structure is given as follows. Let $n\ge 1$.
For $a\in S_{-n}=R$ define $a:M_i\to M_{i-n}$ by
the multiplication $a:C^i\to C^{i-n}$ and by zero on $D_i$.
For $a\in S_n=R$ define $a:M_i\to M_{i+n}$ by 
zero on $C^i$ and by the multiplication $a:D_i\to D_{i+n}$.
The fact that this construction is well-defined is easily verified
if $\gr^*_C$ is concentrated in one degree, and the general case follows
because every object of $\DDD_R$ is a finite direct sum of such objects.
Similarly one verifies that the functors are mutually inverse.

Now, a display over $\uW 1(R)$ consists of a module $M\in\CCC_R$ 
together with an isomorphism
$F:M^\sigma\cong M^\tau$, which corresponds to an isomorphism 
$u:\NNN\cong\MMM$ for the associated object of $\DDD_R$.
Then $(\MMM,C^*,u(D_*),\gr_*(u)\circ\alpha)$ is an $F$-zip over $R$.
\end{proof}

\subsection{Displays and Frobenius gauges}
\label{Subse:DisplayGauge}

Displays are related with the theory of $\varphi$-gauges of \cite{Fontaine-Jannsen:Frobenius-gauges} as follows. 

\begin{Point}
\label{Pt:frame-fgauge}
Let $\u S$ be a frame with an element $\fgauge\in S_1$ such that $\sigma_1(\fgauge)=1$, moreover let $\vgauge=t\in S_{-1}$. 
(For an arbitrary frame, 
$\fgauge$ need not exist, for example the tautological frame 
of Example \ref{Ex:frame-tautological} has $S_1=0$.)
As in \cite{Fontaine-Jannsen:Frobenius-gauges} 
we write
\[
S_{-\infty}=S/(1-\vgauge)\qquad\text{and}\qquad S_{+\infty}=S/(1-\fgauge).
\] 
The homomorphisms $\sigma,\tau:S\to S_0$ induce an isomorphism 
$\underline\tau:S_{-\infty}\xrightarrow\sim S_0$ and a homomorphism 
$\underline\sigma:S_{+\infty}\to S_0$. 
Let $\varphi=\underline\tau^{-1}\circ\underline\sigma$.
Then $(S,\varphi)$ is a $\varphi$-ring in the sense of 
\cite[\S 1.4]{Fontaine-Jannsen:Frobenius-gauges}, and
predisplays over $\u S$ correspond to $\varphi$-$S$-modules,
moreover a predisplay $(M,F)$ over $\u S$
corresponds to a $\varphi$-$S$-gauge
iff $F:M^\sigma\to M^\tau$ is bijective.
In particular, displays over $\u S$ correspond to $\varphi$-$S$-gauges 
whose underlying graded $S$-module is finite projective.
\end{Point}

\begin{Point}
\label{Pt:gauge-S0}
Conversely, let $S_0$ be a ring with a Frobenius lift $\sigma_0$ such that $p\in\Rad(S_0)$, and let $S=S_0[\fgauge,\vgauge]/(\fgauge\vgauge-p)$ as a graded $S_0$-algebra with $\deg(\fgauge)=1$ and $\deg(\vgauge)=-1$.
Then we can identity $S_{+\infty}=S_0=S_{-\infty}$, and $(S,\sigma_0)$ is a $\varphi$-ring.
Moveover let $\tau:S\to S_0$ extend the identity of $S_0$ by $\tau(\vgauge)=1$ and $\tau(\fgauge)=p$,
and let $\sigma:S\to S_0$ extend $\sigma_0$ by $\sigma(\vgauge)=p$ and $\sigma(\fgauge)=1$. 
Then $(S,\sigma,\tau)$ is a frame that gives the $\varphi$-ring $(S,\sigma_0)$ by the above construction,
so $\varphi$-$S$-modules correspond to predisplays over this frame etc.
\end{Point}

\begin{Point}
If $\u S$ is a frame with $\fgauge\in S_1$ as in \ref{Pt:frame-fgauge}, the construction of \ref{Pt:gauge-S0} applied to the pair $(S_0,\sigma_0)$ gives a frame $\u S'$ with $S'=S_0[\fgauge,\vgauge]/(\fgauge\vgauge-p)$. There is a  frame homomorphism
\[
\psi_-:\u S\to\u S'
\]
defined by $\psi_-(a_0t^n)=\sigma_0(a_0)\vgauge^n$ for $a_0\in S_0$ and $n\ge 0$, and $\psi_-(a_n)=\sigma_n(a)\fgauge^n$ for $a_n\in S_n$ and $n\ge 1$.
If $\fgauge\vgauge=1$ in $S$, there is also a frame homomorphism in the opposite direction
\[
\psi_+:\u S'\to\u S
\]
which extends the identity of $S_0$ by $\vgauge\mapsto t$ and $\fgauge\mapsto\fgauge$.
The composition $\psi_+\circ\psi_-$ extends $\sigma_0$ on $S_0$ by
$t\mapsto t$ and $a_n\mapsto\fgauge^n\sigma_n(a_n)$ for $a_n\in S_n$ with $n\ge 1$.
\end{Point}

\begin{Example}
\label{Ex:Witt-gauge}
Let $\u S=\u W(R)$ for a $p$-adic ring $R$. 
Then $\sigma_n:S_n\to S_0$ is bijective for $n\ge 1$,
so there is a unique $\fgauge\in S_1$ with $\sigma_1(\fgauge)=1$, namely $\fgauge=v(1)$,
moreover $\underline\sigma$ is bijective, 
which means that $(S,\varphi)$ is a perfect $\varphi$-ring
in the sense of \cite[\S 1.4]{Fontaine-Jannsen:Frobenius-gauges}.
We have $\fgauge\vgauge=p$ iff $R$ is an $\FF_p$-algebra.
In this case, $\psi_+$ is an isomorphism iff $R$ is perfect 
iff $\psi_-$ is an isomorphism, and consequently displays over 
a perfect ring $R$
are equivalent to $\varphi$-$W(R)$-gauges whose underlying graded module
is finite projective; cf.\ \cite{Wid}.
If $R$ is a general $\FF_p$-algebra, the restriction of scalars by
$\psi_+$ defines a functor from displays over $R$ to $\varphi$-$W(R)$-modules, which happens to be fully faithful because the endomorphisms of $S$ as an $S$-module and as a $W(R)[\fgauge,\vgauge]$-module coincide.
\end{Example}

\begin{Example}
\label{Ex:zip-gauge}
Let $\u S=\uW 1(R)$ for an $\FF_p$-algebra $R$.
Again there is a unique $\fgauge\in S_1$ with $\sigma_1(\fgauge)=1$,
namely $\fgauge=u$ under the isomorphism \eqref{Eq:W1}.
If we identify
\[
S=\{(f,g)\in R[\vgauge]\times R[\fgauge]\mid f(0)^p=g(0)\},
\]
\[
S'=\{(f,g)\in R[\vgauge]\times R[\fgauge]\mid f(0)=g(0)\},
\]
the homomorphism $\psi_-:S\to S'$ applies the Frobenius to the coefficients of $f$ without changing $g$, and the homomorphism $\phi_+:S'\to S$ applies the Frobenius to the coefficients of $g$ without changing $f$.
By Example \ref{Ex:disp-W1-zip},
displays over $\u S$ are equivalent to the $F$-zips of \cite{Moonen-Wedhorn:Discrete},
which are also used in \cite{Pink-Wedhorn-Ziegler:F-zips},
while displays over $\u S'$ are equivalent to the modified $F$-zips of
\cite{Wedhorn:De-Rham} by \cite{Schnellinger}.
The base change under $\psi_-$ corresponds to the functor
from $F$-zips to modified $F$-zips  defined by
$(M,C^*,D_*,\varphi)\mapsto(M,(C^*)^{(p)},D_*,\varphi)$,
while the base change under $\psi_+$ correponds to the functor
in the opposite direction 
$(M,C^*,D_*,\varphi)\mapsto(M^{(p)},C^*,(D_*)^{(p)},\varphi^{(p)})$.
\end{Example}

\section{Descent}

The frames $\u W(R)$ and $\u\WW(R)$ are functors of $R$,
and naturally the associated graded modules and displays 
satisfy descent with respect to some topology on $\Spec R$.
The details differ in the two cases.
For $\u W(R)$ we have fpqc descent 
using that the ring $W(R)$ is the limit of $W_m(R)$;
this is a variant of the Witt vector descent of \cite{Zink:Display}.
In the case of $\u\WW(R)$ we get etale descent using the
observation that for an etale homomorphism of admissible
rings $R\to R'$ (necessarily finite etale) 
the homomorphism $\WW(R)\to\WW(R')$ is finite etale again.
This obervation can be formalized:
If $\u S$ is a frame with $p$-adic components $S_n$
for a ring $R$ in which $p$ is nilpotent,
an etale homomorphism $R\to R'$ can be extended 
to a frame homomorphism $\u S\to\u S'$ 
so that $\u S'$ is a functor of $R'$.
We will verify etale descent in this situation.


\subsection{Sheaf properties of the Witt frame}

The frames $\u W(R)$ form an fpqc sheaf with respect to $R$ because
this holds for the ring $W(R)$, which is bijective to $R^\NN$ as a set.
If $B\to A$ is a PD thickening of rings in which $p$ is nilpotent,
each etale homomorphism $A\to A'$ 
lifts to a unique etale homomorphism $B\to B'$,
and the assignment $A'\mapsto\u W(B'/A')$ is an etale sheaf of frames.

\subsection{Etale localization of frames}
\label{Se:etale-loc}



%

\begin{Defn}
\label{Def:p-adic-frame}
A frame $\u S$ is called $p$-adic if each $S_n$ is a $p$-adic 
abelian group and $p$ is nilpotent in the ring $R=S_0/tS_1$.
\end{Defn}

\begin{Lemma}
\label{Le:p-adic-frame}
$p$-adic frames $\u S$ are equivalent to systems of 
frames $(\u S^{(n)})_{n\in\NN}$ with isomorphisms 
$\u S^{(n+1)}/p^n\cong\u S^{(n)}$
such that the corresponding sequence of rings 
$R^{(n)}=S^{(n)}_0/tS^{(n)}_1$  is stationary, 
by sending $\u S$ to $\u S^{(n)}=\u S/p^n$.
\end{Lemma}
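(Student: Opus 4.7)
The plan is to exhibit mutually inverse functors: the forward functor $\u S\mapsto(\u S/p^n)_{n\in\NN}$ reduces each component modulo $p^n$, while the backward functor $(\u S^{(n)})\mapsto\varprojlim_n\u S^{(n)}$ takes componentwise inverse limits. For the forward direction, I would verify that each $\u S/p^n$ is a frame: the conditions $\tau_0=\id$, $\tau_{-k}$ bijective, $\sigma_0$ a Frobenius lift, and $\sigma_{-1}(t)=p$ all pass to quotients, and $p\in\Rad(S_0/p^n)$ holds because $p$ is nilpotent there. The compatibility $\u S^{(n+1)}/p^n\cong\u S^{(n)}$ is immediate, and $R^{(n)}=R/p^n R$ is stationary precisely because $p$ is nilpotent in $R$.

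For the backward direction, set $S_k:=\varprojlim_n S^{(n)}_k$ with the inherited graded ring structure and maps $\sigma,\tau$. Since the transitions $S^{(n+1)}_k\to S^{(n)}_k$ are surjective (being reductions modulo $p^n$), each $S_k$ is $p$-adically complete and separated with $S_k/p^n S_k\cong S^{(n)}_k$. The frame axioms pass to the inverse limit, and $p\in\Rad(S_0)$ follows from $p$-adic completeness of $S_0$ via the geometric series. The remaining condition for $\u S$ to be $p$-adic, namely that $p$ be nilpotent in $R=S_0/tS_1$, is the main point.

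To handle $p$-nilpotence, choose $N$ with $R^{(n)}=R_\infty$ for $n\geq N$; the compatibility $R_\infty/p^N\cong R_\infty$ forces $p^N R_\infty=0$, so $p^N\in tS^{(m)}_1$ for every $m\geq N$. It suffices to produce $y\in S_1$ with $ty=p^N$, since then $p^N$ maps to zero in $R$ and hence $p^N R=0$. The key observation is that $p^m=p^{m-N}\cdot p^N\in tS^{(m+1)}_1$ for $m\geq N$, and consequently $p^m S^{(m+1)}_0\subseteq tS^{(m+1)}_1$ by the $S_0$-linearity of $t$. Given any partial solution $y_m\in Y_m:=\{z\in S^{(m)}_1\mid tz=p^N\}$, any lift $\tilde y\in S^{(m+1)}_1$ satisfies $t\tilde y-p^N\in p^m S^{(m+1)}_0\subseteq tS^{(m+1)}_1$, so $t\tilde y-p^N=td$ for some $d\in S^{(m+1)}_1$, and $\tilde y-d\in Y_{m+1}$. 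A recursive construction of a compatible sequence $(y_m)$ then produces the desired $y\in S_1$.

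Mutual inverseness follows from $p$-adic completeness of the $S_k$ on one side and the given compatibility of the system on the other. The main obstacle I expect is carrying out the recursion on $(Y_m)$: the element $\tilde y-d$ above does not automatically reduce to the chosen $y_m$ in $S^{(m)}_1$, so a more delicate Mittag-Leffler argument is needed, either by refining the partial choices as the construction proceeds or by exploiting the torsor structure of $Y_m$ over $\Ker(t\colon S^{(m)}_1\to S^{(m)}_0)$ to produce a compatible section throughout the tower.
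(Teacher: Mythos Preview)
Your overall structure matches the paper's: the forward and backward functors are as you describe, and the only substantive point is proving that $p$ is nilpotent in $R=S_0/tS_1$ for the limit frame. Your argument for this point is correct in spirit but more laborious than necessary, and the gap you flag is real.

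The paper avoids the finite-level lifting entirely by working directly in the limit. Fix $m$ with $R/p^m=R/p^{m+1}$ and set $N=\ker\bigl(t\colon S_1\to S_0/p^m\bigr)$, so $t(N)\subseteq p^mS_0$. Since $S_1$ is $p$-adic and $N$ is closed (as the kernel of a map to a separated group), $N$ is $p$-adic. The stationarity hypothesis gives $p^mS_0\subseteq tS_1+p^{m+1}S_0$, which says exactly that $t\colon N\to p^mS_0/p^{m+1}S_0$ is surjective. Now the standard successive-approximation argument for $p$-adic modules (surjective mod~$p$ and source $p$-adically complete implies surjective) yields $t(N)=p^mS_0$, hence $p^mS_0\subseteq tS_1$ and $R=R/p^m$.

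This is really the same inductive mechanism you are setting up, but by packaging it as ``$N$ is $p$-adic and $t$ is surjective mod $p$'' the Mittag--Leffler bookkeeping disappears: the compatibility of successive approximants is automatic when you work inside the single $p$-adic module $N$ rather than across the tower $(S_1^{(m)})$. Your refinement---observing that the correction $d$ can be taken in $p^{m-N}S_1^{(m+1)}$ and then passing to stabilized images---would also work, but it is the long way around.
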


We note that $R^{(n)}=R^{(n+1)}/p^n$ for all $n$, 
so the sequence $R^{(n)}$ is constant for $n\ge m$
iff $p^mR^{(m+1)}=0$.

\begin{proof}
Since $p$-adic abelian groups are equivalent to systems
of abelian groups $(A^{(n)})_{n\in\NN}$ with $A^{(n+1)}/p^n=A^n$,
frames $\u S$ with $p$-adic components $S_n$ are equivalent
to systems $(\u S^{(n)})_{n\in\NN}$ as in the lemma 
with no condition on $R^{(n)}$.
In this situation let $R=S_0/tS_1$, thus $R^{(n)}=R/p^n$. 
In general $R$ need not be $p$-adic.
Assume that $R/p^m=R/p^{m+1}$ for some $m$.
Let $N\subseteq S_1$ be the kernel of $t:S_1\to S_0/p^m$.
Then $N$ is $p$-adic since $S_1$ is $p$-adic, 
moreover $N\to p^mS_0/p^{m+1}S_0$ is surjective. 
Hence $N\to p^mS_0$ is surjective, and thus $R=R_m$.
\end{proof}

\begin{Lemma}
\label{Le:etale-bc-frame}
Let $\u S$ be a $p$-adic frame for $R$ 
as in Definition \ref{Def:p-adic-frame}
and let $R\to R'$ be an etale ring homomorphism.
There is a unique $p$-adic frame $\u S'$ for $R'$
with a homomorphism $\u S\to\u S'$
such that $S_0/p^i\to S_0'/p^i$ is etale and 
$R\otimes_{S_0}S_0'=R'$ and 
$S/p^i\otimes_{S_0}S_0'=S'/p^i$.
\end{Lemma}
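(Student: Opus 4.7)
The plan is to invoke Lemma \ref{Le:p-adic-frame} to reduce to finite levels and then apply etale topological invariance of nil thickenings at each level. Fix $i \ge 1$ and write $\bar S_n := S_n/p^i$. Since $p^i = 0$ in $\bar S_0$ and the kernel of $S_0/p \to R/p$ is a nil-ideal by Remark \ref{Rk:S0pRp-nil}, the kernel $\bar K$ of $\bar S_0 \twoheadrightarrow R$ is itself nil: each $x \in \bar K$ satisfies $x^n \in p \bar S_0$ for some $n$, hence $x^{ni} = 0$. Consequently the etale $R$-algebra $R'$ lifts uniquely up to unique isomorphism to an etale $\bar S_0$-algebra $\bar S_0'$ with $\bar S_0' \otimes_{\bar S_0} R = R'$. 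Uniqueness forces compatibility across levels in $i$, giving in the limit a $p$-adic etale extension $S_0 \to S_0'$.

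I would then lift the Frobenius: the same etale-nil lifting, applied to morphisms, furnishes a unique ring endomorphism $\sigma_0' : \bar S_0' \to \bar S_0'$ extending the composite $\bar S_0 \xrightarrow{\sigma_0} \bar S_0 \to \bar S_0'$ and reducing mod $p$ to the canonical Frobenius on $\bar S_0'/p$. Together these assemble into a Frobenius lift on $S_0'$ extending $\sigma_0$.

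The full graded structure is then built by base change, using Remark \ref{Rk:frame-recover}. Set $\bar S_n' := \bar S_n \otimes_{\bar S_0} \bar S_0'$ for $n \ge 1$ and $\bar S_{-n}' := \bar S_0'$ for $n \ge 1$, so that $\bar S_{\le 0}'$ becomes $\bar S_0'[t]$. The graded multiplication, the $\bar S_0'$-linear maps $t' : \bar S_{n+1}' \to \bar S_n'$, and the homomorphism $\sigma' : \bar S_{\ge 0}' \to \bar S_0'$ are all obtained by tensoring along $\bar S_0 \to \bar S_0'$; for $\sigma'$ one uses that $\sigma$ and $\sigma_0'$ agree on $\bar S_0$ to glue them into a single ring map. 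The axioms of Remark \ref{Rk:frame-recover} --- $\bar S_0'$-linearity of $t'$, the Frobenius divisibility $\sigma_n'(t'a) = p \sigma_{n+1}'(a)$, and $p \in \Rad(\bar S_0')$ (automatic from $p$-nilpotence) --- transfer under flat base change, and $\bar S_0'/t' \bar S_1' = R \otimes_{\bar S_0} \bar S_0' = R'$. Taking the limit over $i$ yields the desired $p$-adic frame $\u S'$ for $R'$, and uniqueness is forced at each level by uniqueness of the etale lift combined with the prescribed identity $S/p^i \otimes_{S_0} S_0' = S'/p^i$. The main technical input is the etale--nil lifting principle, first for the algebra and then for its Frobenius; beyond that, everything is formal diagram chasing across the tower of truncations.
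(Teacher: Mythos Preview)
Your proposal is correct and follows essentially the same approach as the paper: reduce via Lemma~\ref{Le:p-adic-frame} to the case where $p$ is nilpotent in $S$, use topological invariance of the etale site over the nil-thickening $S_0\to R$ to lift $R'$ and then the Frobenius, and obtain the graded structure by base change $S'=S_0'\otimes_{S_0}S$. The paper packages the graded base change directly as $S'=S_0'\otimes_{S_0}S$ with $\tau=\id\otimes\tau$ and $\sigma=\sigma_0'\otimes\sigma$, whereas you reconstruct it via Remark~\ref{Rk:frame-recover}, but this is only a cosmetic difference.
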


We call $\u S'$ the base change of $\u S$ under $R\to R'$
and write $\u\bS(R')=\u S'$. 
Then $\u\bS$ is a presheaf of frames on the category of
affine etale $R$-schemes.

\begin{proof}
By Lemma \ref{Le:p-adic-frame} we may assume that $p^mS=0$. 
Then the kernel of $S_0\to R$ is a nil-ideal, 
so the etale $R$-algebra $R'$ lifts to a unique etale $S_0$-algebra $S_0'$, 
and we have to set $S'=S_0'\otimes_{S_0}S$ as a graded ring
with $\tau={\id}\otimes\tau$.
Since $S_0\to S_0'$ is etale, there is a unique homomorphism
$\sigma_0$ in the middle such that the following diagram commutes.
\[
\xymatrix@M+0.2em{
S_0 \ar[r] \ar[d]_{\sigma_0} &  
S_0' \ar[r] \ar@{-->}[d]^{\sigma_0} & 
S_0'/p \ar[d]^{\Frob} \\
S_0 \ar[r] & S_0' \ar[r] & S_0'/p  
}
\]
This also determines $\sigma=\sigma_0\otimes\sigma$ on $S'=S'_0\otimes_{S_0}S$.
\end{proof}

\begin{Lemma}
\label{Le:S-sheaf}
The presheaf\/ $\u\bS$ is an etale sheaf.
\end{Lemma}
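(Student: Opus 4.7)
The plan is to reduce to the truncated case and then invoke classical etale descent component by component. First, by Lemma \ref{Le:p-adic-frame}, a $p$-adic frame is equivalent to a compatible system of truncations $\u S/p^n$ with stationary residue rings. Since the base change construction of Lemma \ref{Le:etale-bc-frame} commutes with reduction mod $p^i$ by the defining identity $S/p^i\otimes_{S_0}S_0'=S'/p^i$, it suffices to show that $R'\mapsto\u\bS(R')/p^i$ is an etale sheaf for each $i\ge 1$. I therefore assume $p^mS=0$ for some $m$ for the remainder of the argument; in particular all structure will be carried out on ordinary (non-completed) tensor products.

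Under this assumption the kernel of $S_0\to R$ is a nil-ideal, so the functor of reduction induces an equivalence between etale $S_0$-algebras and etale $R$-algebras. Given an etale covering $R'\to R''$ of affine etale $R$-schemes, the induced $S_0'\to S_0''$ is the unique etale lift, and classical etale descent for etale algebras gives descent of the presheaf $R'\mapsto S_0'$. For the higher-degree components I use the formula $S_n'=S_0'\otimes_{S_0}S_n$ together with the fact that $S_0\to S_0'$ is etale and hence flat; tensoring with $S_n$ preserves the equalizer diagram expressing descent of $S_0'$, so $R'\mapsto S_n'$ is an etale sheaf of $S_0'$-modules.

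It remains to observe that the frame structure on $\u\bS(R')$, namely the multiplication of $S'$, the map $t$, the ring homomorphism $\tau$, and the Frobenius lift $\sigma$, is uniquely determined by the corresponding data on $\u S$ together with the $S_0'$-algebra structure (for $\sigma_0'$ this is exactly the unique lift witnessed by the commutative diagram in the proof of Lemma \ref{Le:etale-bc-frame}). Uniqueness of these structure maps ensures that the already established sheaf property of the individual components upgrades to the sheaf property of the presheaf of frames $\u\bS$. I do not foresee a genuine obstacle: the only potentially delicate issue is the interaction between $p$-adic completion and descent, and the initial reduction to the truncated case handles it cleanly.
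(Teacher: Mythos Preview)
Your proof is correct and follows the paper's approach (the paper summarizes it as ``quasi-coherent modules are etale sheaves and limits of sheaves are sheaves''). One minor imprecision: the reason the equalizer diagram for $S_0'$ survives tensoring with $S_n$ over $S_0$ is not that $S_0\to S_0'$ is flat, but that the Amitsur complex for the faithfully flat map $S_0(R')\to S_0(R'')$ is universally exact over $S_0(R')$ --- equivalently, the quasi-coherent $S_0(R')$-module $S_n(R')$ satisfies etale descent.
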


\begin{proof}
This is immediate from the construction, using that quasi-coherent modules are
etale sheaves and that limits of sheaves are sheaves.
\end{proof}

\begin{Lemma}
\label{Le:frame-WW-bc}
Let $A$ and $A'$ be admissible local Artin rings rings 
as in Example \ref{Ex:frame-WW}
and let $A\to A'$ be an etale ring homomorphism. 
Then $\u\WW(A')$ is the base change of\/ $\u\WW(A)$ under $A\to A'$.
If $B\to A$ is a PD thickening of admissible local Artin rings
with nilpotent divided powers
and if $B\to B'$ is the unique etale homomorphism that lifts $A\to A'$,
then $\u\WW(B'/A')$ 
is the base change of\/ $\u\WW(B/A)$ under $A\to A'$.
\end{Lemma}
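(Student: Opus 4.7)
The plan is to verify that $\u\WW(A')$ satisfies the universal property of the base change frame $\u\bS(A')$ from Lemma \ref{Le:etale-bc-frame} applied to $\u S=\u\WW(A)$ and $A\to A'$. Both $\u\WW(A)$ and $\u\WW(A')$ are $p$-adic in the sense of Definition \ref{Def:p-adic-frame} since $\WW$ and $\II$ of admissible local Artin rings are $p$-adic (\cite[\S 1F]{Lau:Relations}) and $p$ is nilpotent in $A$. By Lemma \ref{Le:p-adic-frame} it therefore suffices to compare the two frames modulo $p^i$ for each $i\ge 1$, which amounts to checking (a) that $\WW(A)/p^i\to\WW(A')/p^i$ is etale with reduction $A\to A'$ modulo $\II(A)$, (b) that the graded pieces $\II(A')$ are obtained from $\II(A)$ by extension of scalars along this map, and (c) that this identification is compatible with the Frobenius $\sigma$ and with the maps $t$.

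The heart of the matter is the etaleness in (a). Since $A\to A'$ is a finite etale homomorphism of admissible local Artin rings, the induced residue field extension is finite separable and lifts uniquely to a finite etale extension of the associated Witt vector rings of the residue fields. Combined with the decomposition of Example \ref{Ex:frame-WW}, etaleness of $\WW(A)\to\WW(A')$ modulo $p^i$ should reduce to the analogous fact for the truncated Witt vectors $W_m(A)\to W_m(A')$, which is standard from the functoriality of $W_m$ under finite etale base change. Granting this, assertions (b) and (c) follow by flat base change: one has
\[
\II(A')=\ker(\WW(A')\to A')=\II(A)\otimes_{\WW(A)}\WW(A')
\]
modulo $p^i$, and since the higher graded pieces of $\u\WW$ all coincide with $\II$ as modules with identical structure maps, the graded $S'$-algebra structure modulo $p^i$ coincides with $S_0'\otimes_{S_0}S/p^i$. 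The Frobenius $\sigma_0$ on $\WW(A')/p^i$ is the unique lift of the Frobenius mod $p$ by etaleness, hence agrees with the Witt vector Frobenius; the maps $\sigma_n$ on $\II(A')$ for $n\ge 1$ are determined by the formula $\sigma_n(v(a))=a$ on both sides and are thus automatically compatible.

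The relative version follows identically. Since $B\to B'$ is the unique etale lift of $A\to A'$, the preceding argument gives that $\WW(B)\to\WW(B')$ is etale with reduction $A\to A'$ modulo $\II(B/A)$, and flat base change then produces $J'=J\otimes_B B'$ and hence $\II(B')\oplus J'=(\II(B)\oplus J)\otimes_{\WW(B)}\WW(B')$ modulo $p^i$. The logarithm isomorphism $\log^{-1}$ used in the definition of $t:S_1\to S_0$ is given by a universal divided-power formula and is therefore automatically compatible with etale base change. The main obstacle is precisely the etaleness of $\WW(A)\to\WW(A')$; once this is secured, everything else reduces to routine flat base change and to the uniqueness of etale lifts of Frobenius.
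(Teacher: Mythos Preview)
Your overall strategy---verifying the conditions of Lemma~\ref{Le:etale-bc-frame} modulo $p^i$ and then deducing (b) and (c) by flat base change once (a) is secured---is correct and matches the paper's framework. The gap is in your treatment of (a). You suggest that etaleness of $\WW(A)/p^i\to\WW(A')/p^i$ ``should reduce to the analogous fact for the truncated Witt vectors $W_m(A)\to W_m(A')$'', but no such reduction is available: the ring $\WW(A)/p^i$ is not $W_m(A)$ for any $m$. For $i$ large enough that $p^i\hat W(\Fm')=0$ one has $\WW(A)/p^i=W_i(k)\oplus\hat W(\Fm)$, and $\hat W(\Fm)$ (Witt vectors with entries in $\Fm$ and finite support) is genuinely different from the corresponding piece of $W_i(A)$; for instance there is a surjection $\WW(A)/p^i\to W_i(A)$ with nontrivial kernel whenever $\Fm\ne 0$. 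Invoking the decomposition of Example~\ref{Ex:frame-WW} is the right move, but it does not by itself produce a comparison with $W_m(A)$.

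The paper argues directly from the decomposition $\WW(A)/p^i=W_i(k)\oplus\hat W(\Fm)$. Since $W_i(k)\to W_i(k')$ is finite \'etale, it suffices to prove that the natural map
\[
\hat W(\Fm)\otimes_{W_i(k)}W_i(k')\longrightarrow\hat W(\Fm')
\]
is bijective. This is done by filtering by powers of $\Fm$: for $N=\Fm^r/\Fm^{r+1}$ and $N'=\Fm'^r/\Fm'^{r+1}$ one has $N'=N\otimes_kk'$ and $\hat W(N)=N^{(\NN)}$, whence $\hat W(N)\otimes_kk'\cong\hat W(N')$. The same decomposition handles $S_n/p^i=\hat I(\Fm)\oplus I_{i+1}(k)$ for $n\ge 1$ and, in the relative case, the additional summand $J$ via $J'=J\otimes_{W_i(k)}W_i(k')$. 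Once this explicit computation replaces your appeal to $W_m(A)$, the remainder of your outline (flat base change for $\II$, uniqueness of the Frobenius lift, compatibility of $t$ and of $\log$) goes through.
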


\begin{proof}
Let $\Fm\subset A$ and $\Fm'\subset A'$ be the maximal ideals,
$k=A/\Fm$, and $k'=A'/\Fm'$.
We write $\u S=\u\WW(A)$ and $\u S'=\u\WW(A')$.
Let $i$ be sufficiently large such that $p^i\hat W(\Fm')=0$.
Then $S_0/p^i=\hat W(\Fm)\oplus W_i(k)$ and 
$S_n/p^i=\hat I(\Fm)\oplus I_{i+1}(k)$ for $n\ge 1$,
moreover we have a sequence of ring homomorphisms
\[
W_i(k)\to S_0/p^i\to W_i(k)\to k
\]
where the composition of the first two maps is the identity.
All this also holds for $A'$ instead of $A$ with $'$ in appropriate places. 
We have to show that the homomorphism $S_0/p^i\to S'_0/p^i$ 
is etale and lifts $k\to k'$.
This is true for the homomorphism $W_i(k)\to W_i(k')$, 
hence we have to show that the natural map
\begin{equation}
\label{Eq:S0WW}
S_0/p^i\otimes_{W_i(k)}W_i(k')\to S_0'/p^i
\end{equation}
is bijective, or equivalently that
\begin{equation*}
\hat W(\Fm)\otimes_{W_i(k)}W_i(k')\to\hat W(\Fm')
\end{equation*}
is bijective. 
This holds if for each $N=\Fm^r/\Fm^{r+1}$ and $N'=\Fm'^r/\Fm'^{r+1}$
the homomorphism 
\[
\hat W(N)\otimes_kk'\to\hat W(N')
\] 
is bijective,
which is clear since $N'=N\otimes_kk'$ and $\hat W(N)=N^{(\NN)}$.
It also follows that $S_0\to S_0'$ 
is a finite etale lift of $k\to k'$.
Moreover we have to show that 
\begin{equation}
\label{Eq:SnSS}
S_n/p^i\otimes_{S_0}S_0' \to S'_n/p^i
\end{equation}
is biective for all $n\in\ZZ$,
or equivalently that
\begin{equation}
\label{Eq:SnWW}
S_n/p^i\otimes_{W_i(k)}W_i(k') \to S'_n/p^i
\end{equation}
is bijective.
This is proved as \eqref{Eq:S0WW}, 
using that $S_n\cong S_0$ for $n\le 0$
and $S_n/p^i\cong\hat I(\Fm)\oplus W_i(k)$ for $n\ge 1$.

Now let $B\to A=B/J$ and $B'\to A'=B'/J'$ 
be as in the second part of the lemma,
and let $\u S=\u\WW(B/A)$ and $\u S'=\u\WW(B'/A')$.
Since $J'=JB'$ is a PD ideal with nilpotent divided powers,
the frame $\u\WW(B'/A')$ is defined.
Let $i$ be sufficiently large such that $p^i\hat W(\Fm_{B'})=0$.
Again we have to show that 
\eqref{Eq:SnWW} is bijective for all $n\in\ZZ$.
In degrees $n\le 0$ the frame $\u\WW(B/A)$ coincides with $\u\WW(B)$,
which was treated before.
For $n>0$ we have $S_n/p^i\cong\hat I(\Fm_B)\oplus J\oplus W_i(k)$,
and here $J'=J\otimes_BB'=J\otimes_{W_i(k)}W_i(k')$ as required.
\end{proof}

\begin{Example}
Let $A\to A'$ be an etale homomorphism of $\FF_p$-algebras and let $m\ge 1$.
Then $\uW m(A')$ is the base change of $\u W(A)$ under $A\to A'$.
If $B\to A=B/J$ is a PD thickening of $\FF_p$-algebras and $B\to B'$
is the etale homomorphism that lifts $A\to A'$, then $\uW m(B'/A')$ is the base change of $\uW m(B/A)$ under $A\to A'$.
The proof is easy, using \cite[Prop.~A.8]{Langer-Zink:DRW}.
\end{Example}

\begin{Example}
\label{Ex:frame-W-bc}
If $A\to A'$ is an etale homomorphism of rings in which $p$ is nilpotent,
$\u W(A')$ does not in general coincide with the base change of $\u W(A)$
under $A\to A'$ because the limit topology of $W(A)=\varprojlim_m W_m(A)$ 
differs from the $p$-adic topology. 
\end{Example}

\subsection{Descent of graded modules}

\begin{Lemma}
\label{Le:descent-grad-mod-S}
Let $\u S$ a $p$-adic frame for $R$ 
(Definition \ref{Def:p-adic-frame}).
The categories of finite projective graded $\bS(R')$-modules 
for varying etale $R$-algebras $R'$ form an etale stack.
\end{Lemma}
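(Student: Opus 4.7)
The plan is to reduce mod $p^m$ and then invoke faithfully flat descent for modules, keeping track separately of the grading and of finite projectivity.

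By Lemma \ref{Le:p-adic-frame}, $\u S = \varprojlim_m \u S/p^m$. A finite projective graded $S$-module $M$ is $p$-adically complete with each $M/p^mM$ finite projective graded over $\u S/p^m$, and conversely every compatible system of such modules glues uniquely to a finite projective graded $S$-module. This equivalence is compatible with base change to $\u\bS(R')$ for $R'$ etale over $R$, since by construction $\u\bS$ respects $p$-adic truncations (Lemma \ref{Le:etale-bc-frame}). Because a limit of etale stacks is again an etale stack, it suffices to treat the truncated case $p^m \u S = 0$.

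In that case Lemma \ref{Le:etale-bc-frame} identifies $\bS(R') = S \otimes_{S_0} S_0'$ as a graded $S$-algebra, with $S_0 \to S_0'$ etale. Given a covering family $\{R \to R_i\}$, set $R^\circ = \prod R_i$; then $S_0 \to S_0^\circ$ is faithfully flat and etale, so $S \to \bS(R^\circ)$ is faithfully flat. For the sheaf property on morphisms, given finite projective graded $\bS(R)$-modules $M, N$, the formula
\[
\Hom_{\bS(R')}^0(M \otimes_S \bS(R'),\, N \otimes_S \bS(R')) \cong \Hom_S^0(M,N) \otimes_{S_0} S_0'
\]
exhibits the relevant presheaf of morphisms as an etale sheaf in $R'$, by Lemma \ref{Le:S-sheaf} together with quasi-coherent descent.

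For effectivity of descent, let $M^\circ$ be a finite projective graded $\bS(R^\circ)$-module equipped with a descent datum for the faithfully flat map $\bS(R) \to \bS(R^\circ)$. Standard faithfully flat descent for modules produces an $S$-module $M$ with $M \otimes_S \bS(R^\circ) \cong M^\circ$ compatible with the descent datum. Since that descent datum is by hypothesis a morphism of graded $\bS(R^\circ \otimes_R R^\circ)$-modules, it restricts to a descent datum on each graded piece $M_n^\circ$, which descends separately to an $S_0$-module $M_n$; the resulting decomposition $M = \bigoplus_n M_n$ is the required grading, and its compatibility with the $S$-action descends componentwise in the same fashion. Finite projectivity of $M$ is automatic since it is fpqc-local on the base. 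The main technical point is the reduction to the truncated case, after which everything splits into ordinary module descent applied to each graded piece separately.
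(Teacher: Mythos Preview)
Your truncated case ($p^mS=0$) is fine and matches the paper. The gap is in the reduction step.

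First, the claim that a finite projective graded $S$-module is $p$-adically complete is false as stated: since $S_{-n}\cong S_0$ for all $n\ge 0$, the graded ring $S=\bigoplus_n S_n$ has infinitely many nonzero components, so already $S$ itself is not $p$-adically complete (its completion is $\prod_n S_n$). What is true is that each graded piece $M_n$ is $p$-adic, but that is a weaker statement.

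Second, and more seriously, the converse assertion---that every compatible system of finite projective graded $S/p^m$-modules glues to a finite projective graded $S$-module---is precisely the non-trivial content you are skipping. One can form the gradewise limit $M_n=\varprojlim_m (M^{(m)})_n$ and set $M=\bigoplus_n M_n$, but nothing you have said shows this is finite projective over $S$. The paper supplies exactly this argument: one takes $\bar L = M^{(m)}\otimes_{S,\rho}R$ (independent of large $m$), lifts it to a finite projective graded $S_0$-module $L$ using that $S_0$ is $p$-adic (Remark~\ref{Rk:mod-std-p-adic}), forms the standard projective $N=L\otimes_{S_0}S$, chooses a map $N\to M$ reducing to the identity on $\bar L$, and then invokes Corollary~\ref{Co:NAK-proj} at each finite level to conclude that $N/p^i\to M^{(i)}$ is an isomorphism, hence $N\cong M$. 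This use of the standard projective structure together with Nakayama is the missing ingredient in your reduction; once you insert it, your argument and the paper's coincide.
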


%

\begin{proof}
Since for finite projective graded $S$-modules $M$ and $N$ the
graded $S$-module $\Hom_S(M,N)$ is finite projective again,
the homomorphisms form an etale sheaf by Lemma \ref{Le:S-sheaf}.
Let $R\to R'$ be faithfully flat etale and $R''=R'\otimes_RR'$.
We have to show that a finite projective graded module $M'$ over 
$S'=\bS(R')$ with a descent datum over $S''=\bS(R'')$ 
descends to a finite projective graded $S$-module.

Assume first that $p^mS=0$. 
Then $S_0\to S_0'$ is faithfully flat etale, 
and the graded $S_0'$-module $M'$ descends to a graded $S_0$-module $M$.
Since $S'=S\otimes_{S_0}S_0'$, the multiplication map $S'\times M'\to M'$
descends to $S\times M\to M$, and $M$ is a graded $S$-module such that $M'=M\otimes_SS'$.
Since $S\to S'$ is faithfully flat,
the $S$-module $M$ is finite projective,
and $M$ is projective as a graded $S$-module 
by Lemma \ref{Le:proj-mod-gr}.

In general this shows that $M'/p^i$ descends 
to a finite projective graded module $M^i=\bigoplus_nM^i_n$ over $S/p^i$. 
Let $M_n=\varprojlim_iM^i_n$. 
We have to show that the graded $S$-module $M=\bigoplus_n M_n$ 
is finite projective. 
Assume that $p^mR=0$ and let $\bar L=M^{m}\otimes_{S,\rho}R$,
which is independent of $m$. Let $L$ be a finite projective graded
$S_0$-module which lifts $\bar L$ and choose a homomorphism
$N=L\otimes_{S_0}S\to M$ that induces the identity of $\bar L$
under reduction by the homomorphism $\rho:S\to R$ of \eqref{Eq:rho}. 
Then $N/p^i\to M^i$ is an isomorphism by Corollary \ref{Co:NAK-proj},
hence $N\to M$ is an isomorphism, and $M$ is projective.
\end{proof}

\begin{Lemma}
\label{Le:descent-grad-mod-W}
The categories of finite projective graded $\u W(R)$-modules
for varying rings $R$ in which $p$ is nilpotent form an fpqc stack.
\end{Lemma}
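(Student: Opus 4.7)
The plan is to mirror the structure of the proof of Lemma \ref{Le:descent-grad-mod-S}, replacing etale descent for general $p$-adic frames by Zink's fpqc descent for Witt vectors (a variant of which is discussed in \cite{Zink:Display}): namely, $R\mapsto W(R)$ preserves faithful flatness, and finite projective $W(R)$-modules form an fpqc stack on the category of rings in which $p$ is nilpotent.

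First, to obtain the sheaf property for morphisms, note that for finite projective graded $\u W(R)$-modules $M$ and $N$ the graded module $\Hom_{\u W(R)}^*(M,N)$ is finite projective graded again, with each homogeneous component a finite projective $W(R)$-module (a summand of a finite free one). The sheaf property on an fpqc cover then reduces to the fpqc sheaf property of $R\mapsto W(R)$ and of $R\mapsto I(R)$, combined with Zink's descent for sections of finite projective $W(R)$-modules.

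For effective descent, let $R\to R'$ be an fpqc cover with $p$ nilpotent in $R$, and let $R''=R'\otimes_R R'$. Let $M'$ be a finite projective graded $\u W(R')$-module equipped with a descent datum over $\u W(R'')$. By Lemma \ref{Le:proj-mod-gr} and Lemma \ref{Le:proj-mod-std} (applicable since $W(R')$ is $p$-adic), $M'$ is the image of an idempotent endomorphism $e'$ of a finite free graded module $F'=\bigoplus_{i=1}^n \u W(R')(d_i)$. The graded rank data $(d_1,\ldots,d_n)$, read off from the finite projective graded $R'$-module $M'\otimes_{\u W(R'),\rho}R'$, descend by classical fpqc descent of finite projective $R$-modules, yielding a standard free graded module $F$ over $\u W(R)$ with $F\otimes_{\u W(R)}\u W(R')\cong F'$. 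The endomorphism ring $\End_{\u W(R)}^0(F)$ is a finite product of components $S_{d_j-d_i}$ of $\u W(R)$, each of which equals either $W(R)$ or $I(R)$. Both are fpqc sheaves of $W(R)$-modules whose finite projective module categories satisfy fpqc descent, so the idempotent $e'$ descends to an idempotent $e\in\End_{\u W(R)}^0(F)$, and $M=e(F)$ is the required finite projective graded $\u W(R)$-module, recovering $M'$ after base change.

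The main obstacle is to verify that descent at the level of individual components of $\u W(R)$ is compatible with the full graded algebra structure of the frame, in particular with the multiplication $v(a)v(b)=pv(ab)$ on $I(R)$ coming from the Verschiebung, so that the descended idempotent $e$ is genuinely an endomorphism of $F$ as a graded $\u W(R)$-module. This is ensured by the observation that $\u W(R)$, viewed as a graded-ring-valued functor on rings in which $p$ is nilpotent, is itself an fpqc sheaf by the functoriality of Witt vectors; hence any descent datum for graded modules automatically respects the frame structure.
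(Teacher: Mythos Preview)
Your treatment of the Hom sheaf is fine and matches the paper's. The problem is in the effective descent step: you claim that the idempotent $e'\in\End^0_{\u W(R')}(F')$ descends to an idempotent in $\End^0_{\u W(R)}(F)$ because the components of the latter ring are fpqc sheaves. But a sheaf condition only lets you descend a section once you know that its two pullbacks to $R''$ agree, and nothing in your setup forces $p_1^*(e')=p_2^*(e')$. The descent datum you are given is an isomorphism $\phi:p_1^*M'\xrightarrow{\sim}p_2^*M'$; it says that the \emph{images} of $p_1^*(e')$ and $p_2^*(e')$ inside $F''=\bigoplus\u W(R'')(d_i)$ are isomorphic, not that the two idempotents coincide. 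Equivalently, the presentation $M'\hookrightarrow F'$ as a summand is not part of the descent datum, and there is no reason it should be compatible with $\phi$. So the step ``$e'$ descends'' is exactly the nontrivial content of the lemma, and you have assumed it rather than proved it.

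The paper's proof confronts this obstruction head-on. It first descends the graded $R'$-module $\bar L'=M'\otimes_\rho R'$ to $\bar L$ over $R$, lifts $\bar L$ to a graded projective $W(R)$-module $L$, and sets $N=L\otimes_{W(R)}S$. One then has an isomorphism $\alpha:N'\cong M'$ over the identity of $\bar L'$ by Corollary~\ref{Co:NAK-proj}, and the question becomes whether $\alpha$ can be chosen compatibly with the descent data. The obstruction lives in the \v Cech set $H^1(R'/R,K)$ for $K=\Ker(\uAut(N)\to\uAut(\bar L))$, and the paper shows this vanishes by filtering $K$: over an $\FF_p$-algebra $K$ is a limit of vector groups, and in general one inducts on the nilpotence degree of $p$ in $R$, using that the successive kernels are infinite products of quasi-coherent sheaves. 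This cohomological step is precisely what replaces your unjustified descent of $e'$, and it genuinely uses the specific structure of the Witt frame (in particular the square-zero kernel of $W(R)\to W(R/p^i)$ identified via $\log$), not just the sheaf property of $W$ and $I$.
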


\begin{proof}
As in the preceding proof, the homomorphisms form an fpqc sheaf because
$W(R)$ and $I(R)$ form fpqc sheaves.
Assume that $R\to R'$ is faithfully flat, let $R''=R'\otimes_RR'$,
and let $\u S=\u W(R)$, $\u S'=\u W(R')$,  $\u S''=\u W(R'')$.
We have to show that a finite projective graded $S'$-module $M'$
with a descent datum over $S''$ descends to $S$. 

The finite projective graded $R'$-module $\bar L'=M'\otimes_{S',\rho}R'$
carries a descent datum over $R''$ and thus descends to a
finite projective graded $R$-module $\bar L$.
Let $L$ be a finite projective graded $S_0$-module that lifts $\bar L$,
and let $N=L\otimes_{S_0}S$ and $N'=N\otimes_SS'$. 
Using Corollary \ref{Co:NAK-proj} we find
an isomorphism $\alpha:N'\cong M'$ lifting the identity of $\bar L'$.
We have to show that there is an $\alpha$ which commutes with the descent data.
The obstruction lies in the \v Cech cohomology set $H^1(R'/R,K)$
where $K$ is the kernel of $\uAut(N)\to\uAut(\bar L)$,
and $\uAut(N)$ and $\uAut(\bar L)$ are considered as sheaves
on the category of flat affine $R$-schemes.
We will show that $H^1(R'/R,K)$ is trivial.

First, if $R$ is an $\FF_p$-algebra, by an obivous variant of Lemma \ref{Le:AutM}
one finds a representation $K=\varprojlim K_m$
with surjective transition maps whose kernels are vector groups, 
and with $K_0=0$, thus $H^1(R'/R,K)=0$.
In general assume that $p^{i+1}R=0$ and $p^iR\ne 0$ for some $i\ge 1$.
Let $\bar R=R/p^i$, let $\bar N$ and $\bar{\bar L}$ 
be the base change of $N$ and $\bar L$ under $R\to\bar R$,
and let $\bar K$ be the kernel of $\uAut(\bar N)\to\uAut(\bar{\bar L})$,
where $\uAut(\bar N)$ and $\uAut(\bar{\bar L})$ 
are again considered as sheaves
on the category of flat affine $R$-schemes.
The reduction map $K\to\bar K$ is surjective as presheaves.
%
%
Using that the kernel of $W(R)\to W(\bar R)$ has square zero
and is isomorphic to $(p^iR)^\NN$ via $\log$, one verifies that
the kernel $K_0$ of $K\to\bar K$ is isomorphic to an
infinite direct product of quasi-coherent sheaves associated
to $R$-modules of the form $p^iV$ for finite projective $R$-modules $V$.
Thus $H^1(R'/R,K_0)$ is trivial, 
and by induction it follows that $H^1(R'/R,K)$ is trivial as required.
%
%
%
%
\end{proof}

\subsection{Descent of displays}

\begin{Lemma}
Let $\u S$ be a $p$-adic frame for $R$
(Definition \ref{Def:p-adic-frame}).
The categories of displays over $\u\bS(R')$ 
for varying etale $R$-algebras $R'$ form an etale stack.
\end{Lemma}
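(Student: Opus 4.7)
The plan is to reduce the claim to Lemma \ref{Le:descent-grad-mod-S} (descent of finite projective graded modules), using that the frame base change of Lemma \ref{Le:etale-bc-frame} intertwines both $\sigma$ and $\tau$ by construction. Combined with faithfully flat descent for $S_0$-linear maps of finite projective modules, this will provide descent for the Frobenius datum $F$ as well.

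First I would check that display homomorphisms form an etale sheaf. A morphism of displays $(M,F)\to(N,G)$ is a graded $S$-linear map $\phi\colon M\to N$ satisfying $G\circ\phi^\sigma=\phi^\tau\circ F$. For finite projective graded modules, $\Hom_S^0(M,N)$ is a direct summand of the finite projective graded module $M^*\otimes_S N$ and hence varies as an etale sheaf in the argument $R'$ by Lemma \ref{Le:descent-grad-mod-S}. The intertwining condition is the equalizer of two maps into the sheaf $\Hom_{S_0}^0(M^\sigma,N^\tau)$, so display homomorphisms form an etale subsheaf.

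For the descent of objects, let $R\to R'$ be a faithfully flat etale covering with $R''=R'\otimes_RR'$, and let $(M',F')$ be a display over $\u S'=\u\bS(R')$ equipped with descent data over $\u S''=\u\bS(R'')$. By Lemma \ref{Le:descent-grad-mod-S} the module $M'$ descends to a finite projective graded $\u S$-module $M$ with $M\otimes_S S'\cong M'$ compatibly with the descent datum. Because $\u S\to\u S'$ is a homomorphism of frames, i.e.\ commutes with $\sigma$ and $\tau$, one obtains canonical isomorphisms
\[
(M')^{\sigma'}\cong M^\sigma\otimes_{S_0}S_0',\qquad (M')^{\tau'}\cong M^\tau\otimes_{S_0}S_0',
\]
and analogous identifications over $S_0''$. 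Hence $F'$ is an $S_0'$-linear isomorphism between the base changes of the two finite projective $S_0$-modules $M^\sigma$ and $M^\tau$, carrying descent data; it therefore descends to an $S_0$-linear map $F\colon M^\sigma\to M^\tau$, which remains bijective by faithful flatness. The resulting pair $(M,F)$ is the desired display over $\u S$ restricting to $(M',F')$ on $\u S'$.

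The substantive work was already carried out in Lemma \ref{Le:descent-grad-mod-S}; what remains is essentially bookkeeping. The only point that has to be singled out is the compatibility of $\sigma$ and $\tau$ with the etale base change of frames, but this is built into the construction of $\u\bS(R')$ in Lemma \ref{Le:etale-bc-frame}, so no new difficulty arises.
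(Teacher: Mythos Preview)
Your strategy is the same as the paper's: invoke Lemma \ref{Le:descent-grad-mod-S} for the underlying graded module and then descend the datum $F:M^\sigma\to M^\tau$ separately as an $S_0$-linear isomorphism between finite projective $S_0$-modules. The paper's proof is two sentences carrying out exactly this.

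There is one imprecision worth flagging. You repeatedly appeal to ``faithfully flat descent'' along $S_0\to S_0'$ and conclude bijectivity ``by faithful flatness''. But the construction in Lemma \ref{Le:etale-bc-frame} only guarantees that $S_0/p^r\to S_0'/p^r$ is (faithfully flat) \'etale for each $r$; the $p$-adic limit $S_0\to S_0'$ need not be flat when $S_0$ is not Noetherian. The paper avoids this by observing that the sheaf condition for $\Isom(M^\sigma,M^\tau)$ can be checked after reduction modulo $p^r$, where genuine \'etale descent for the finite projective $S_0/p^r$-modules $M^\sigma/p^r$ and $M^\tau/p^r$ applies, and then passing to the inverse limit (which is harmless since $M^\sigma$, $M^\tau$ are finite projective over the $p$-adic ring $S_0$). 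With this adjustment your argument is correct and coincides with the paper's.
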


\begin{proof}
The modules $M$ of a display form an etale stack by
Lemma \ref{Le:descent-grad-mod-S}.
The isomorphisms $M^\sigma\cong M^\tau$ form an etale sheaf by
etale descent for the modules 
$M^\sigma/p^rM^\sigma$ and $M^\tau/p^rM^\tau$
over $S_0/p^r$.
\end{proof}

\begin{Lemma}
The categories of displays over $\u W(R)$ 
for varying rings $R$ in which $p$ is nilpotent form an fpqc stack.
\end{Lemma}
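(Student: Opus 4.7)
The plan is to mimic the proof of the preceding etale-stack lemma, replacing Lemma \ref{Le:descent-grad-mod-S} by its fpqc counterpart Lemma \ref{Le:descent-grad-mod-W}. Essentially all of the hard technical content, namely fpqc descent for finite projective graded $\u W(R)$-modules together with the $\check H^1$-vanishing input, has already been supplied by Lemma \ref{Le:descent-grad-mod-W}, so I expect the display statement to follow formally, with no genuinely new ingredient.

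Let $R\to R'$ be a faithfully flat map of $p$-nilpotent rings and $R''=R'\otimes_RR'$; let $(M',F')$ be a display over $\u W(R')$ with a descent datum over $\u W(R'')$. First, by Lemma \ref{Le:descent-grad-mod-W} the underlying graded $\u W(R')$-module $M'$ descends to a finite projective graded $\u W(R)$-module $M$; since $\sigma$ and $\tau$ are functorial in $R$, one has $M'^\sigma=M^\sigma\otimes_{W(R)}W(R')$ and likewise for $\tau$. Second, I would identify the $\sigma$-linear map $F$ with the associated $S_0$-linear map $M^\sigma\to M^\tau$, i.e.\ with a section of the finite projective graded $\u W(R)$-module $\uHom(M^\sigma,M^\tau)$ placed in degree zero, so that $F'$ descends to an $S_0$-linear map $F:M^\sigma\to M^\tau$ by the sheaf aspect of Lemma \ref{Le:descent-grad-mod-W}. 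Morphisms between two displays are handled the same way, observing that the compatibility of a module morphism with $F$ is a closed condition preserved by the descent.

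The only remaining point is that the resulting $F$ is bijective and not merely linear. Here I would use that $F$ is a map between finite projective $W(R)$-modules of equal rank, so bijective is equivalent to surjective; the cokernel of $F$ is a finitely generated $W(R)$-module that vanishes after the base change to $W(R')$, hence vanishes by fpqc descent for $W(R)$-modules, itself an instance of Lemma \ref{Le:descent-grad-mod-W}. The real obstacle has thus already been overcome in the proof of Lemma \ref{Le:descent-grad-mod-W}; the passage from modules to displays is a routine bookkeeping exercise in gluing linear data on top of the already-established module descent.
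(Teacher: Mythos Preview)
Your approach is essentially the same as the paper's: descend the underlying module via Lemma~\ref{Le:descent-grad-mod-W}, then descend the Frobenius map separately. The paper's version of the second step is a one-line appeal to fpqc descent for finite projective $W(R)$-modules \cite[Cor.~34]{Zink:Display}, which in particular gives that \emph{isomorphisms} between such modules form an fpqc sheaf.

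There is, however, a genuine gap in your bijectivity argument. You write that the cokernel of $F$ ``vanishes after the base change to $W(R')$, hence vanishes by fpqc descent for $W(R)$-modules.'' This step implicitly uses that $W(R)\to W(R')$ is faithfully flat (both to identify $\Coker(F)\otimes_{W(R)}W(R')$ with $\Coker(F')$ and to conclude vanishing), and that is precisely the subtle point: for $R\to R'$ faithfully flat one does \emph{not} know that $W(R)\to W(R')$ is faithfully flat, which is why the Witt-vector descent of \cite{Zink:Display} and of Lemma~\ref{Le:descent-grad-mod-W} proceeds indirectly via the truncations $W_m$. Lemma~\ref{Le:descent-grad-mod-W} does not cover arbitrary finitely generated $W(R)$-modules such as your cokernel. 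The fix is easy: since $\Hom_{W(R)}(M^\sigma,M^\tau)$ and $\Hom_{W(R)}(M^\tau,M^\sigma)$ are finite projective $W(R)$-modules (hence direct summands of the fpqc sheaf $W(-)^n$ and thus fpqc sheaves themselves), descend $(F')^{-1}$ to a map $G:M^\tau\to M^\sigma$ and then use the sheaf property of $\Hom$ to conclude $FG=\id$ and $GF=\id$. This is what the citation of \cite[Cor.~34]{Zink:Display} encodes.

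A minor point of phrasing: $\uHom(M^\sigma,M^\tau)$ is not a graded $S$-module ``placed in degree zero''; $M^\sigma$ and $M^\tau$ are $S_0=W(R)$-modules, so you should simply say that $\Hom_{W(R)}(M^\sigma,M^\tau)$ is a finite projective $W(R)$-module and invoke the sheaf property for such modules.
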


\begin{proof}
The modules $M$ of a display form an fpqc stack by
Lemma \ref{Le:descent-grad-mod-W}.
The isomorphisms $M^\sigma\cong M^\tau$ form an fpqc sheaf by
fpqc descent of finite projective $W(R)$-modules,
see \cite[Cor.~34]{Zink:Display}.
\end{proof}

\section{$G$-displays}

Let $k$ be a finite extension of $\FF_p$.

\begin{Defn}
A display datum is a pair $(G,\mu)$ 
where $G$ is a smooth affine group scheme of finite type over $\ZZ_p$ 
and $\mu$ is a cocharacter of $G_{W(k)}$.

A frame over $W(k)$ is a frame $\u S$ where $S$ is a graded $W(k)$-algebra
and $\sigma:S\to S_0$ extends the Frobenius of $W(k)$.
\end{Defn}

\begin{Example}
If $R$ is a $W(k)$-algebra, then $W(R)$ is a $W(k)$-algebra
by the Cartier homomorphism $W(k)\to W(W(k))$
(see \cite[Chap.~VII, Prop.~4.12]{Lazard})
composed with $W(W(k))\to W(R)$.
It follows that the frames 
$\u W(R)$, $\uW m(R)$, $\u W(B/A)$, $\u\WW(R)$, and $\u\WW(B/A)$
of Examples \ref{Ex:frame-W}, \ref{Ex:frame-Wm}, \ref{Ex:frame-W-rel}, 
\ref{Ex:frame-WW}, and \ref{Ex:frame-WW-rel} 
are frames over $W(k)$ if the input rings $R$ and $B$ are $W(k)$-algebras.
\end{Example}

\begin{Remark}
\label{Rk:G-disp-O}
The following definition of $G$-displays can be extended 
to $\OOO$-frames as in Remark \ref{Rk:O-frames}.
In that context, a display datum 
will be a pair $(G,\mu)$ where $G$ is
an affine group scheme of finite type over $\OOO$ and $\mu$ is a cocharacter
of $G$ defined over a finite unramified extension $\OOO'$ of $\OOO$, 
and one should consider $\OOO$-frames over $\OOO'$ in the obvious sense.
\end{Remark}

\subsection{Definition of the display group}


For an affine $W(k)$-scheme $X=\Spec A$,
an action of $\Gm$ on $X$ is the same as a $\ZZ$-grading of the $W(k)$-algebra $A$.
Explicitly, if an action 
\[
X\times\Gm\to X,\qquad(x,\lambda)\mapsto x*\lambda
\]
is given, then $\lambda\in\Gm$ acts on $f\in A$ by $(\lambda*f)(x)=f(x*\lambda)$,
and $A_n$ is the set of all $f\in A$ such that $\lambda*f=\lambda^nf$.
If $\Gm$ acts on $X$ and $S$ is a $\ZZ$-graded $W(k)$-algebra,
we denote by 
\[
X(S)^{0}\subseteq X(S)
\]
the set of $\Gm$-equivariant morphisms $\Spec S\to X$ over $W(k)$, 
or equivalently the set of homomorphisms of graded $W(k)$-algebras
$A\to S$.


%

Let $(G,\mu)$ be a display datum and let $\u S$ be a frame over $W(k)$ for $R$.
We consider the action of $\lambda\in\Gm$ on $G_{W(k)}$ by 
$g*\lambda=\mu(\lambda)^{-1}g\mu(\lambda)$ 
and define the display group
\begin{equation}
\label{Eq:G(S)-mu}
G(S)_\mu=G_{W(k)}(S)^{0}
\end{equation}
%
with respect to this action.
The homomorphisms $\sigma,\tau:S\to S_0$ of $\ZZ_p$-algebras induce
group homomorphisms 
\[
\sigma,\tau:G(S)_\mu\to G(S_0).
\]
Explicitly, if $G=\Spec A$, an element $g\in G(S)_\mu$
corresponds to a homomorphism of graded $W(k)$-algebras
$g:A\otimes W(k)\to S$,
and $\sigma(g)$ corresponds to the composition
\[
A\xrightarrow{x\mapsto x\otimes 1} 
A\otimes W(k)\xrightarrow {\;g\;} 
S\xrightarrow{\;\sigma\;} S_0,
\]
and similarly for $\tau$.
The group $G(S)_\mu$ acts on the set $G(S_0)$ by
\begin{equation}
\label{Eq:action}
G(S_0)\times G(S)_\mu\to G(S_0),
\qquad
(x,g)\mapsto \tau(g^{-1})x\sigma(g).
\end{equation}
We want to take the quotient of \eqref{Eq:action}
with respect to the etale topology of $R$.
This is possible when $\u S$ is a $p$-adic frame or when $\u S=\u W(R)$
since in both cases $\u S$ is a functor of $R$, at least for etale maps.
Before carrying this out let us record how the action \eqref{Eq:action}
is related to $\sigma$-conjugation.

\subsection{$G$-displays and $\sigma$-$\mu$-conjugation}
\label{Se:G-disp-tf}


Let $(G,\mu)$ be a display datum
and $\u S$ a frame over $W(k)$.
We define a pre-frame $\u B=\u S{}_{\iso}$ 
with a homomorphism $\u S\to\u B$ by $B_0=S_0[1/p]$ and $B=B_0[t,t^{-1}]$,
which determines $\sigma,\tau:B\to B_0$.
Let $G(B)_\mu\subseteq G(B)$ as in \eqref{Eq:G(S)-mu}.
Again we obtain group homomorphisms $\sigma,\tau:G(B)_\mu\to G(B_0)$.

\begin{Lemma}
\label{Le:action-tf}
The homomorphism $\tau:G(B)_\mu\to G(B_0)$ is bijective,
and under this isomorphism the action 
$G(B_0)\times G(B)_\mu\to G(B_0)$ defined as in \eqref{Eq:action}
corresponds to
\begin{equation}
\label{Eq:action2}
G(B_0)\times G(B_0)\to G(B_0),\qquad 
(x,g)\mapsto g^{-1}\cdot x\cdot\sigma_0(\mu(p)g\mu(p)^{-1}).
\end{equation}
\end{Lemma}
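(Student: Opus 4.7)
The plan is to describe the inverse of $\tau$ explicitly using the special shape $B=B_0[t,t^{-1}]$ (with $\deg(t)=-1$, so that $B_n=B_0\cdot t^{-n}$), and then to reinterpret $\sigma(g)$ in terms of $\mu$ using the definition of the grading on $A\otimes W(k)$ as coming from the conjugation action $h\mapsto\mu(\lambda)^{-1}h\mu(\lambda)$.

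For the bijectivity of $\tau$, any graded $W(k)$-algebra homomorphism $g\colon A\otimes W(k)\to B$ sends a homogeneous element $a\in(A\otimes W(k))_n$ into $B_n=B_0\cdot t^{-n}$, so $g(a)=\psi(a)\,t^{-n}$ for a unique $\psi(a)\in B_0$. Extending linearly, the multiplicativity of $g$ gives that $\psi\colon A\otimes W(k)\to B_0$ is a $W(k)$-algebra homomorphism, i.e.\ an element of $G(B_0)$; the construction is reversed by the same formula, and since $\tau(t)=1$ we have $\tau(g)=\psi$, whence $\tau\colon G(B)_\mu\to G(B_0)$ is bijective.

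For the second statement, set $\psi=\tau(g)$. Since $\sigma$ extends $\sigma_0$ on $B_0$ and $\sigma(t)=p$, the formula $g(a)=\psi(a)\,t^{-n}$ yields
\[
\sigma(g)(a)=\sigma_0(\psi(a))\cdot p^{-n}
\]
for $a\in (A\otimes W(k))_n$. On the other hand, the grading is characterized by the identity $a(\mu(\lambda)^{-1}h\mu(\lambda))=\lambda^n a(h)$ for $a\in(A\otimes W(k))_n$, valid over any $W(k)$-algebra $R$ for $\lambda\in R^\times$ and $h\in G(R)$. Applying this with $R=B_0$ and $\lambda=p^{-1}$ (using $p\in B_0^\times$), the element $\mu(p)\psi\mu(p)^{-1}\in G(B_0)$ sends $a\in(A\otimes W(k))_n$ to $p^{-n}\psi(a)$. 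The functoriality map induced by $\sigma_0\colon B_0\to B_0$ then gives, using $\sigma_0(p)=p$,
\[
\sigma_0(\mu(p)\psi\mu(p)^{-1})(a)=p^{-n}\sigma_0(\psi(a)),
\]
which matches the formula for $\sigma(g)(a)$. Hence $\sigma(g)=\sigma_0(\mu(p)\psi\mu(p)^{-1})$ in $G(B_0)$, and substituting into \eqref{Eq:action} produces \eqref{Eq:action2}.

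The only real subtlety is in the last step: $\sigma$ is not $W(k)$-linear on $B_0$, so one must be careful when passing between the $\ZZ_p$-algebra homomorphisms $A\to B_0$ defining points of $G(B_0)$ and their $W(k)$-linear extensions $A\otimes W(k)\to B_0$. The computation above is however consistent with either point of view, so no real obstacle arises.
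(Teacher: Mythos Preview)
Your argument is correct and is essentially the same as the paper's, only expressed in explicit coordinates rather than geometrically. The paper introduces an auxiliary $B_0$-algebra homomorphism $\tau_p:B\to B_0$ with $\tau_p(t)=p$, observes that $\sigma=\sigma_0\circ\tau_p$, identifies $\Spec B\cong\Gm\times\Spec B_0$, and then interprets $\tau_p\circ\tau^{-1}$ as the action of $p^{-1}\in\Gm(B_0)$, i.e.\ $x\mapsto\mu(p)x\mu(p)^{-1}$; your formula $g(a)=\psi(a)t^{-n}$ and the computation $\sigma(g)(a)=p^{-n}\sigma_0(\psi(a))$ are exactly this, unpacked on homogeneous elements.
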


\begin{proof}
Let $\tau_p:B\to B_0$ be the homomorphism of $B_0$-algebras 
with $\tau_p(t)=p$, and recall that $\tau(t)=1$. 
Then $\sigma=\sigma_0\circ\tau$.
We can identify $\Spec B=\Gm\times\Spec B_0$ 
such that $\lambda\in\Gm$ acts on $\Gm$ 
by multiplication with $\lambda^{-1}$
since $\deg(t)=-1$. 
Hence $\tau$ and $\tau_p$ induce bijective homomorphisms
$G(G)_\mu\to G(B_0)$, 
and the automorphism $\tau_p\circ\tau^{-1}$ of $G(B_0)$
is given by the action of $p^{-1}\in\Gm(B_0)$,
which ist $x\mapsto\mu(p)x\mu(p)^{-1}$.
Hence the endomorphism $\sigma\circ\tau^{-1}$ of $G(B_0)$
is given by $x\mapsto\sigma_0(\mu(p)x\mu(p)^{-1})$,
and the lemma follows.
\end{proof}

If the ring $S_0$ is torsion free, 
$S_0\to B_0$ is injective,
and $G(S_0)$ is a subgroup of $G(B_0)$.
In this case the action \eqref{Eq:action2}
determines the action \eqref{Eq:action}.

\subsection{$G$-displays over $p$-adic frames}
\label{Se:G-disp-nil-frame}

Let $\u S$ be a $p$-adic frame over $W(k)$ for $R$
and let $\u\bS$ be the associated sheaf of frames
on the category of affine etale schemes over $\Spec R$ 
as in \S \ref{Se:etale-loc}.

\begin{Lemma}
\label{Le:X(S)-sheaf}
If $X=\Spec A$ is an affine $W(k)$-scheme of finite type
with an action of\/ $\Gm$,
the presheaves $X(\bS)^{0}$ and $X(\bS_0)$ on the
category of affine etale $R$-schemes are etale sheaves.
%
\end{Lemma}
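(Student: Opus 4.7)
The plan is to deduce both sheaf properties formally from the fact that $\u\bS$ is an etale sheaf (Lemma~\ref{Le:S-sheaf}), using only that $\Hom_{W(k)\text{-alg}}(A,-)$ preserves limits. Being a sheaf of frames means in particular that each graded component $\bS_n$ is an etale sheaf of abelian groups and that $\bS_0$ is an etale sheaf of rings; the multiplication maps $\bS_n \times \bS_m \to \bS_{n+m}$, the unit, and the Frobenius $\sigma$ are defined sectionwise, so they assemble into a sheaf of graded $W(k)$-algebras automatically.

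For $X(\bS_0)$ I would argue as follows. Given an etale covering $R' \to R''$ with $R''' = R'' \otimes_{R'} R''$, the presheaf $\bS_0$ fits into an equalizer diagram
\[
\bS_0(R') \to \bS_0(R'') \rightrightarrows \bS_0(R''').
\]
Applying the functor $\Hom_{W(k)\text{-alg}}(A,-)$, which preserves equalizers of rings since it is representable, yields the analogous equalizer for $X(\bS_0) = \Hom_{W(k)\text{-alg}}(A,\bS_0(-))$, so $X(\bS_0)$ is an etale sheaf.

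For $X(\bS)^{0}$ I would unfold the definition: a section over $R'$ is a family of $W(k)$-linear maps $(f_n \colon A_n \to \bS_n(R'))_{n \in \ZZ}$ satisfying $f_0(1)=1$ and $f_{n+m}(ab) = f_n(a)\, f_m(b)$ for $a \in A_n$, $b \in A_m$. Each presheaf $R' \mapsto \Hom_{W(k)}(A_n,\bS_n(R'))$ is an etale sheaf by the same equalizer argument applied to $\bS_n$; multiplicativity and the unit condition cut out a subfunctor by equalities of sections, and equalities of sections are preserved under arbitrary limits. Assembling the pieces shows that $X(\bS)^{0}$ is an etale sheaf.

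The only minor issue is the graded bookkeeping in the second step, but the whole argument is formal once one records that $\u\bS$ being a sheaf of frames is equivalent to each $\bS_n$ being a sheaf of abelian groups with the algebraic operations inherited sectionwise. No serious obstacle is expected.
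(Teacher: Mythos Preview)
Your argument is correct and rests on the same input as the paper's proof, namely Lemma~\ref{Le:S-sheaf}. The paper organizes the reduction differently: it first checks the case $X=\AA^1$ with a given weight $r$, where $X(\bS)^0=\bS_r$ and $X(\bS_0)=\bS_0$ are sheaves directly by Lemma~\ref{Le:S-sheaf}, and then uses the finite type hypothesis to present a general $X$ as an equalizer $\AA^n\rightrightarrows\AA^m$ with homogeneous coordinates, so that $X(\bS)^0$ and $X(\bS_0)$ become equalizers of sheaves. Your version instead applies the limit-preserving functor $\Hom_{W(k)\text{-alg}}(A,-)$ (resp.\ its graded variant) directly to the sheaf equalizer diagram for $\bS_0$ (resp.\ the $\bS_n$). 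This is slightly more abstract and, as you implicitly observe, does not actually use that $X$ is of finite type; the paper's route makes the geometry explicit at the cost of invoking that hypothesis. Both are equally valid and essentially formal.
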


\begin{proof}
If $R\to R'$ is an etale homomorphism 
and if $\u S\to\u S'$ is the base change of $\u S$, 
then $X(\bS)^{0}(R')=X(S')^{0}$ and $X(\bS_0)(R')=X(S'_0)$.
Assume that $X=\AA^1=\Spec W(k)[u]$ with weight $r\in\ZZ$,
i.e.\ $u$ has degree $r$, 
which corresponds to the action of $\Gm$ by $\lambda*u=\lambda^ru$.
Then $X(S)^{0}=S_r$ and $X(S_0)=S_0$,
and the sheaf property holds by Lemma \ref{Le:S-sheaf}.
In general, $X$ is the equalizer of a pair of maps $\AA^n\rightrightarrows\AA^m$,
where each coordinate is homogeneous of some degree.
Hence $X(\bS)^{0}$ 
and $X(\bS_0)$ 
are equalizers of maps between etale sheaves,
so they are etale sheaves.
\end{proof}

\begin{Defn}
\label{Def:G-disp-nil}
For a display datum $(G,\mu)$,
the groupoid of $G$-displays of type $\mu$ over $\u S$ is defined as
\[
G\GDisp_\mu(\u S)=[G(\bS_0)/G(\bS)_\mu](\Spec R),
\]
the quotient groupoid of the action \eqref{Eq:action} 
with respect to the etale topology.
\end{Defn}

Thus $G\GDisp_\mu(\u S)$
is the groupoid of pairs $(Q,\alpha)$ where $Q$ is an
etale $G(\bS)_\mu$-torsor over $\Spec R$ and 
$\alpha:Q\to G(\bS_0)$ is a $G(\bS)_\mu$-equivariant map of sheaves
with respect to the action \eqref{Eq:action}.

\begin{Remark}
\label{Rk:G-disp}
$G\GDisp_\mu(\u S)$
is also equivalent to the groupoid of pairs $(Q,\gamma)$ 
where $Q$ is an etale $G(\bS)_\mu$-torsor over $\Spec R$ and 
$\gamma:Q^\sigma\to Q^\tau$ is an isomorphism of $G(\bS_0)$-torsors.
Indeed, the map of sheaves 
$G(\bS_0)\times G(\bS_0)\to G(\bS_0)$,
$(g,g')\mapsto g^{-1}g'$ 
induces an isomorphism
\[
[G(\bS_0)\backslash(G(\bS_0)\times G(\bS_0))/G(\bS)_\mu]\xrightarrow\sim
[G(\bS_0)/G(\bS)_\mu]
\]
where the left quotient groupoid is taken with respect to the action
\begin{multline*}
G(\bS_0)\times(G(\bS_0)\times G(\bS_0))\times G(\bS)_\mu\to
G(\bS_0)\times G(\bS_0),\\
(h,(g,g'),k)\mapsto (hg\tau(k),hg'\sigma(k)),
\end{multline*}
which leads to the pairs $(Q,\gamma)$.
%
\end{Remark}

\begin{Remark}[Functoriality]
For a homomorphism $\u S\to\u S'$ of $p$-adic frames  
there is a base change functor $G\GDisp_\mu(\u S)\to G\GDisp_\mu(\u S')$.
For a homomorphism $\gamma:G\to G'$ of group schemes of finite type 
over $\ZZ_p$ and $\mu'=\gamma\circ\mu$
there is a base change functor $G\GDisp_\mu(\u S)\to G'\GDisp_{\mu'}(\u S)$.
\end{Remark}

\begin{Example}
\label{Ex:disp-GLn}
Let $G=\GL_n$ and let $\mu:\Gm\to\GL_n$ be the cocharacter 
$\mu(x)=\diag(x^{\mu_1},\ldots,x^{\mu_n})$ with $\mu_1\ge\ldots\ge\mu_n$.
Then $x\in\Gm$ acts on a matrix $A=(a_{ij})$ 
by $a_{ij}\mapsto x^{\mu_j-\mu_i}a_{ij}$. 
It follows that $\GL_n(S)_\mu=\Aut_S^{0}(N_\mu)$ 
for the graded $S$-module 
$N_\mu=\bigoplus _iS(-\mu_i)$; see Lemma \ref{Le:AutM}.
Every graded $S$-module of type $\mu$ is isomorphic to $N_\mu$ locally
in $\Spec R$. Hence \S \ref{Se:disp-change-basis}
implies that
\[
\GL_n\GDisp_{\mu}(\u S)
\]
is the groupoid of displays of type $\mu$ over $\u S$.
\end{Example}

\begin{Example}
Let $R$ be an admissible local Artin
$W(k)$-algebra as in Example \ref{Ex:frame-WW},
or let $B\to A$ be a PD thickening of admissible 
local Artin $W(k)$-algebras
with nilpotent divided powers.
We denote the groupoids of $G$-displays of type $\mu$ over
the associated frames $\u\WW(R)$ and $\u\WW(B/A)$ by
\[
G\GDisp_\mu^\WW(R)=G\GDisp_\mu(\u\WW(R))
\]
and
\[
G\GDisp_\mu^\WW(B/A)=G\GDisp_\mu(\u\WW(B/A)).
\]
\end{Example}

\begin{Example}
For a $k$-algebra $R$ and $m\ge 1$ we denote the 
groupoid of $G$-displays of type $\mu$ 
over the frame $\uW m(R)$ of Example \ref{Ex:frame-Wm} by
\[
G\GDisp_\mu^{W_m}(R)=G\GDisp_\mu(\uW m(R)).
\]
The functoriality of $\uW m(R)$ with respect to $R$
and the resulting base change of $G$-displays 
makes $G\GDisp_\mu^{W_m}$ into a fibered category over
the category of affine $\FF_p$-schemes,
and $G\GDisp_\mu^{W_m}$ is a smooth algebraic stack 
of dimension zero over $k$ because the functors 
$
R\mapsto G(\uW m(R))_\mu
$
and
$
R\mapsto G(W_m(R))
$
are representable by affine smooth group schemes of equal dimension;
see Lemma \ref{Le:Greenberg-graded} below, which extends
\cite{Greenberg:Schemata, Greenberg:SchemataII} to the graded setting.
\end{Example}

\begin{Lemma}
\label{Le:Greenberg-graded}
Let $X=\Spec A$ be an affine $W(k)$-scheme of finite type
with a $\Gm$-action, and $n\ge 1$.
The functor of $k$-algebras $X_n(R)= X(\uW n(R))^{0}$ 
is representable by an affine $k$-scheme of finite type.
If $X$ is smooth of dimension $d$, then $X_n$ is smooth of dimension $nd$,
and the reduction map $X_{n+1}\to X_n$ 
is surjective and smooth of dimension $d$.
\end{Lemma}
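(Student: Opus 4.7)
The plan is to establish three facts in sequence: (i) representability of $X_n$ as an affine $k$-scheme of finite type, (ii) smoothness of $X_n$ of dimension $nd$ over $k$, and (iii) smoothness and surjectivity of the reduction map $X_{n+1}\to X_n$ with fibers of dimension $d$.

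For (i), I would first choose a finite set of homogeneous generators of $A$ as a $W(k)$-algebra, producing a $\Gm$-equivariant closed immersion $\iota : X \hookrightarrow Y = \Spec W(k)[x_1,\ldots,x_N]$ with $x_j$ of weight $r_j \in \ZZ$. For each weight $r$, the functor $R \mapsto \uW n(R)_r$ on $k$-algebras is representable by $\AA^n_k$: for $r \le 0$ via the isomorphism $\tau_r : \uW n(R)_r \cong W_n(R)$ and the standard Witt-coordinate identification $W_n \cong \AA^n_k$, and for $r \ge 1$ via the Verschiebung bijection $W_n(R) \cong v(W_{n+1}(R)) = \uW n(R)_r$. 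Hence $Y_n \cong \AA^{Nn}_k$, and $X_n$ is cut out inside $Y_n$ by the graded defining equations of $\iota$, giving an affine $k$-scheme of finite type.

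For (ii) and (iii), I would apply the infinitesimal lifting criterion to two different graded square-zero extensions. Both reduce to the following problem: given a graded surjection $\tilde S \twoheadrightarrow S$ of graded $W(k)$-algebras with kernel a graded square-zero ideal $L$, and a graded homomorphism $\phi : A \to S$, the set of graded lifts $A \to \tilde S$ is a torsor under the module of graded degree-zero $W(k)$-derivations $\operatorname{Der}^{\gr,0}_{W(k)}(A,L) \cong \Hom^{\gr,0}_A(\Omega^1_{A/W(k)} \otimes_A S, L)$. Smoothness of $X$ over $W(k)$ provides an ungraded lift by the usual infinitesimal lifting, and since $L$ is graded, the defect of this lift can be projected to its degree-zero component to produce a genuinely graded lift. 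For (ii) I take $\tilde S = \uW n(R')$ and $S = \uW n(R)$ for a square-zero extension $R' \twoheadrightarrow R$ of $k$-algebras with kernel $J$; the kernel $L$ has $J$-rank $n$ in each degree (the $n$ Witt coordinates valued in $J$), so the derivation module has rank $nd$ over $J$, yielding smoothness of $X_n$ of dimension $nd$. For (iii) I take $\tilde S = \uW{n+1}(R)$ and $S = \uW n(R)$; the kernel has $R$-rank $1$ in each degree (the top Witt coordinate produced by $V^n$), giving relative dimension $d$ for $X_{n+1} \to X_n$.

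The main obstacle is verifying that in both applications the kernel $L$ is indeed a square-zero graded ideal. For (ii) this uses $J^2 = 0$ together with the Witt polynomial formulas, showing that each coordinate of the Witt product of two vectors with coordinates in $J$ lies in $J^2 = 0$. For (iii), the crucial point is $V^n(a) \cdot V^n(b) = p^n V^n(ab)$ combined with the vanishing $p^{n+1} = V^{n+1}(1) = 0$ in $W_{n+2}(R)$, valid because $R$ is an $\FF_p$-algebra. With these structural facts in place, the graded Nakayama-type argument for promoting ungraded lifts to graded ones, and the rank computation for $\operatorname{Der}^{\gr,0}$ via $\Omega^1_{X/W(k)}$ of total rank $d$, proceed by routine bookkeeping.
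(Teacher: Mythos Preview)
Your approach is genuinely different from the paper's and largely sound, but there are two real gaps and one arithmetic slip.

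\textbf{Comparison of strategies.} For representability (i) you and the paper do the same thing. For smoothness the paper takes a more geometric route: it covers $X$ by $\Gm$-stable opens $U$ admitting $\Gm$-equivariant \emph{\'etale} maps $U\to\AA^d$, and then shows that the functor $(-)_n$ sends \'etale maps to \'etale maps. The key point there is that for an \'etale map the lift along a square-zero extension of graded rings is \emph{unique}, hence automatically $\Gm$-equivariant. Your ``projection trick'' (take an ungraded lift produced by smoothness of $X$, then project to the degree-preserving component using $L^2=0$) is a nice substitute that bypasses the \'etale reduction; it is correct and is the genuinely new idea in your argument.

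\textbf{Gap 1: smoothness of $X_{n+1}\to X_n$.} Your item (iii) takes $\tilde S=\uW{n+1}(R)$ and $S=\uW n(R)$; this lifting problem shows that $X_{n+1}(R)\to X_n(R)$ is surjective and identifies each fiber as a torsor under a rank-$d$ module. That gives surjectivity and relative dimension, but \emph{not} smoothness of the morphism $X_{n+1}\to X_n$. For that you must verify the infinitesimal criterion relative to $X_n$: given a square-zero extension $R'\to R$, a point $\phi\in X_{n+1}(R)$, and $\psi'\in X_n(R')$ compatible in $X_n(R)$, produce a lift in $X_{n+1}(R')$. Equivalently, apply your projection trick to the graded surjection
\[
\uW{n+1}(R')\;\longrightarrow\;\uW{n+1}(R)\times_{\uW n(R)}\uW n(R'),
\]
whose kernel is contained in $\ker\bigl(\uW{n+1}(R')\to\uW{n+1}(R)\bigr)$ and hence has square zero. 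Alternatively, once (ii) gives that $X_n$ and $X_{n+1}$ are smooth over $k$, it suffices to check surjectivity on tangent spaces at closed points, which is exactly the case $R=k$, $R'=k[\epsilon]$ of the displayed problem. Either way, this step is missing from your write-up.

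\textbf{Gap 2: the dimension count.} Asserting that $\operatorname{Der}^{\gr,0}_{W(k)}(A,L)$ has rank $nd$ (resp.\ $d$) over $J$ (resp.\ $R$) is not automatic: you need that $\Omega^1_{A/W(k)}\otimes_A S$ is \emph{standard projective} as a graded $S$-module, i.e.\ isomorphic to $\bigoplus_{i=1}^d S(-e_i)$, so that $\Hom^{\gr,0}_S(\Omega^1\otimes_A S,L)\cong\bigoplus L_{e_i}$. For the tangent-space calculation at a $k$-point this follows from Lemma~\ref{Le:proj-mod-std} since $S_0=W_n(k)$ is local; you should invoke this explicitly rather than calling it routine bookkeeping.

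\textbf{Arithmetic slip.} Your justification for $L^2=0$ in (iii) is not right: $p^{n+1}=V^{n+1}(1)$ is $(0,\ldots,0,1)\in W_{n+2}(R)$, which is \emph{nonzero}. The correct computation (in degree~$0$) is that $V^n(a)V^n(b)=p^nV^n(ab)$ in $W(R)$, and for an $\FF_p$-algebra $p^nV^n(c)=V^{2n}(c^{p^n})$, which vanishes in $W_{n+1}(R)$ since $2n>n$. In positive degrees the frame multiplication is $v(a)*v(b)=v(ab)$ with $a,b\in W_{n+1}(R)$, so the same identity applies. Your conclusion is correct; only the bookkeeping needs fixing.
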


\begin{proof}
Assume that $X=\AA^1$ with weight $r\in\ZZ$.
Then $X_n(R)=W_n(R)$ when $r\le 0$ and $X_n(R)=I_{n+1}(R)$ when $r>0$,
in both cases $X_n\cong\AA^n$ such that $X_{n+1}\to X_n$ is a
coordinate projection $\AA^{n+1}\to\AA^n$. 
In general, $X$ is the equalizer of a pair of maps $\AA^m\rightrightarrows\AA^l$
where each coordinate is homogeneous of some degree,
hence $X_n$ is representable affine of finite type.
Assume that $X$ is smooth of dimension $d$.
Then $X$ is covered by open sets of the form $U=\Spec A_f$ for a
homogeneous $f\in A$ such that there is a $\Gm$-equivariant etale
map $U\to\AA^d=Y$ 
where each coordinate of $\AA^d$ is homogeneous of some degree.
The resulting functors $U_n$ form an open cover of $X_n$.
We claim that $U_n\to Y_n\cong\AA^{dn}$ is etale, which proves the lemma.
To prove the claim let $S\to R$ be a surjective homomorphism of $\FF_p$-algebras
with kernel of square zero and let $u\in U_n(R)$ and $y\in Y_n(S)$ be
given with equal image in $Y_n(R)$. 
The kernel of the ring homomorphism $\uW n(S)\to \uW n(R)$ has square
zero, which implies that there is a unique $\tilde u\in U(\uW n(S))$ 
which lifts $u$ and $y$. 
The element $\tilde u$ is $\Gm$-equivariant by its uniqueness,
which means that $\tilde u\in U_n(S)$, and the claim is verified.
\end{proof}

\begin{Remark}
If $(G,\mu)$ is of Hodge type, i.e.\ there is an embedding $G\to\GL_n$ such that $\mu$ is minuscule for $\GL_n$, a related construction of truncated displays with $(G,\mu)$-structure is given in \cite{Zhang:Stratifications}.
\end{Remark}

\subsection{$G$-displays over Witt vectors}
\label{Se:G-disp-W}

Let $(G,\mu)$ be a display datum.
We have fpqc sheaves $G(W)$ and $G(\u W)_\mu$
on the category $\AffNilp_{W(k)}$ 
of affine $W(k)$-schemes on which $p$ is nilpotent
defined by 
\[
G(W)(\Spec A)=G(W(A))
\qquad\text{and}\qquad
G(\u W)_\mu(\Spec A)=G(\u W(A))_\mu
\]
as in \eqref{Eq:G(S)-mu};
the sheaf property follows from Lemma \ref{Le:XWRgr} below.
We define a fibered category over $\AffNilp_{W(k)}$,
\begin{equation}
G\GDisp_\mu^W=[G(W)/G(\u W)_\mu],
\end{equation}
the quotient groupoid of the action defined by \eqref{Eq:action} 
with respect to the etale topology.
By Lemma \ref{Le:G-disp-W-fpqc} below
one could also take the fpqc topology. 
The groupoid $G\GDisp_\mu^W(R)=G\GDisp_\mu^W(\Spec R)$ will be called the
groupoid of $G$-displays over $\u W(R)$ of type $\mu$.

\begin{Lemma}
\label{Le:XWRgr}
Let $X=\Spec A$ be an affine $W(k)$-scheme 
with an action of\/ $\Gm$.
The functors of $W(k)$-algebras $R\mapsto X(\u W(R))^{0}$ 
and $R\mapsto X(W(R))$ are representable.
\end{Lemma}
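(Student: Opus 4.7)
The plan is to reduce the problem to coordinate functors and then exploit the fact that every structural operation on $W(R)$ and on the Witt frame $\u W(R)$ is given by polynomial formulas in the Witt coordinates. Since $A$ is a $\ZZ$-graded $W(k)$-algebra, we may choose a (possibly infinite) presentation $A = W(k)[u_i]_{i \in I}/(f_j)_{j \in J}$ in which each generator $u_i$ is homogeneous of degree $r_i\in\ZZ$ and each relation $f_j$ is homogeneous of degree $d_j\in\ZZ$. Writing $S_r$ for the degree-$r$ piece of $\u W(R)$, the two functors we wish to represent are
\[
X(W(R)) = \bigl\{(w_i)\in\textstyle\prod_i W(R)\bigm|f_j(w_i)=0\text{ in }W(R),\ \forall j\bigr\}
\]
and
\[
X(\u W(R))^{0} = \bigl\{(w_i)\in\textstyle\prod_i S_{r_i}\bigm|f_j(w_i)=0\text{ in }S_{d_j},\ \forall j\bigr\}.
\]

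First I verify representability of the coordinate functors. The functor $R\mapsto W(R)$ is the classical affine ring scheme, represented over $W(k)$ by $\Spec W(k)[x_0,x_1,\ldots]$, with ring operations given by the universal Witt polynomials. By Example \ref{Ex:frame-W} together with Remark \ref{Rk:frame-recover}, the graded pieces of $\u W(R)$ are $S_r\cong W(R)$ for $r\le 0$ (via $\tau_{-r}$) and $S_r=I(R)=v(W(R))$ for $r\ge 1$; the latter is the closed subscheme $\{x_0=0\}$ of the Witt vector scheme and is therefore representable. An arbitrary product of representable affine functors is representable, so both $\prod_iW(R)$ and $\prod_iS_{r_i}$ are represented by (generally infinite-type) affine $W(k)$-schemes.

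It then remains to check that each equation $f_j(w_i)=0$ cuts out a closed subfunctor. This reduces to the observation that every operation needed to evaluate $f_j$ is polynomial in Witt coordinates: addition and multiplication on $W$ are Witt polynomials by definition; the multiplication $S_n\times S_m\to S_{n+m}$ of Example \ref{Ex:frame-W} becomes ordinary Witt multiplication under the identifications $S_n\cong W$ above; the maps $t\colon S_{n+1}\to S_n$ are the closed immersion $I\hookrightarrow W$ for $n=0$, multiplication by $p$ for $n\ge 1$, and the identity for $n\le-1$; and the $W(k)$-algebra structure is the one coming from the Cartier homomorphism recalled just before the lemma. Hence each $f_j(w_i)=0$ unfolds into a countable system of polynomial equations in the Witt coordinates of the tuple $(w_i)$, defining a closed subfunctor; intersecting over $j\in J$ yields the required representing affine schemes. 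The only subtlety—not a genuine obstacle—is that $X$ is merely assumed affine and not of finite type, so the index sets $I,J$, and hence the representing schemes, are typically of infinite type over $W(k)$.
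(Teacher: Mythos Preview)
Your proof is correct and follows essentially the same approach as the paper: reduce to the case $X=\AA^1$ with a given weight, where the functor is $W(R)$ or $I(R)$ and hence representable, then present a general $X$ by homogeneous generators and relations (equivalently, as an equalizer of graded maps between affine spaces) and observe that all structure maps of the Witt frame are given by universal polynomials in Witt coordinates. The paper's own proof is a one-line reference to the argument of Lemma~\ref{Le:Greenberg-graded}, so you have simply unpacked that reference; your explicit allowance for infinite index sets is appropriate since Lemma~\ref{Le:XWRgr}, unlike Lemma~\ref{Le:Greenberg-graded}, does not assume $X$ of finite type.

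One small imprecision: the claim that the graded multiplication $S_n\times S_m\to S_{n+m}$ ``becomes ordinary Witt multiplication under the identifications $S_n\cong W$'' is not literally correct in every case. For $n,m\ge 1$ the product $v(a)*v(b)=v(ab)$ differs from the Witt product $v(a)\cdot v(b)=pv(ab)$, and for $n=0$, $m\ge 1$ the module action $a\cdot v(b)=v(\sigma(a)b)$ involves the Frobenius. What matters for your argument, and what remains true, is that each of these operations is given by explicit Witt polynomials, hence defines a morphism of the representing schemes; you should phrase the claim that way.
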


\begin{proof}
This follows from the proof of Lemma \ref{Le:Greenberg-graded},
using that the functors $R\mapsto W(R)$ and $R\mapsto I(R)$ are representable
and that the equalizer of a pair of maps between representable
sheaves is representable. 
\end{proof}

\begin{Lemma}
\label{Le:G-disp-W-fpqc}
The fibered category $G\GDisp_\mu^W$ is an fpqc stack.
\end{Lemma}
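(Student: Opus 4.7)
The plan is to verify the two stack axioms: that Isom-presheaves between $G$-displays are fpqc sheaves, and that fpqc descent data for $G$-displays are effective.

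\emph{Isom sheaves.} Using the pair description $(Q,\gamma)$ from Remark \ref{Rk:G-disp}, the Isom-presheaf between two $G$-displays $(Q_i,\gamma_i)$ becomes, after pullback to a sufficiently fine etale cover trivializing both $G(\u W)_\mu$-torsors, a subfunctor of $G(\u W)_\mu$ cut out by the closed condition of intertwining $\gamma_1$ and $\gamma_2$. Since $G(\u W)_\mu$ is representable by an affine $W(k)$-scheme by Lemma \ref{Le:XWRgr}, this is a subsheaf of a representable affine presheaf, hence an fpqc sheaf.

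\emph{Effective descent.} Let $R \to R'$ be an fpqc cover in $\AffNilp_{W(k)}$, let $R'' = R' \otimes_R R'$, and let $(Q',\gamma')$ be a $G$-display over $R'$ with a descent datum over $R''$. The first step is to descend the etale $G(\u W)_\mu$-torsor $Q'$. Since $G$ is smooth affine over $\ZZ_p$, each truncation $G(\uW n)_\mu$ is representable by a smooth affine $W(k)$-scheme by Lemma \ref{Le:Greenberg-graded}, and $G(\u W)_\mu = \varprojlim_n G(\uW n)_\mu$ is a pro-smooth affine $W(k)$-scheme. Under such a group, fpqc torsors are locally trivial in the etale topology, so etale and fpqc $G(\u W)_\mu$-torsors coincide, and fpqc descent produces an etale $G(\u W)_\mu$-torsor $Q$ on $R$ whose pullback to $R'$ is $Q'$. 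The second step is to descend $\gamma'$: this follows from fpqc descent for isomorphisms of $G(W)$-torsors, which reduces to fpqc descent of finite projective $W(R)$-modules \cite[Cor.~34]{Zink:Display}, exactly as in the analogous descent statement for $W$-displays established in the preceding section.

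The main obstacle is the identification of etale $G(\u W)_\mu$-torsors with fpqc $G(\u W)_\mu$-torsors: this is what allows us to upgrade the descent, formulated in the etale topology by definition of the quotient groupoid, to the fpqc topology. The key inputs are the representability statements of Lemma \ref{Le:XWRgr} and the (pro-)smoothness coming from the Greenberg-type argument in Lemma \ref{Le:Greenberg-graded}; once these are in place the rest of the verification is formal descent theory, mirroring the module-theoretic statement proved above.
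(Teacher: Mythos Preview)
Your overall strategy matches the paper's: reduce to showing that every fpqc $K$-torsor for $K=G(\u W)_\mu$ is already etale locally trivial. However, two points need attention.

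First, the identification $G(\u W)_\mu=\varprojlim_n G(\uW n)_\mu$ via the truncated Witt frames $\uW n$ is only available for $\FF_p$-algebras (Example~\ref{Ex:frame-Wm}), whereas the lemma concerns all of $\AffNilp_{W(k)}$. The paper instead filters $S(R)$ by the graded ideals $J_{(m)}(R)$ given by $v^mW(R)$ in every degree, which makes sense for arbitrary $R$, and obtains $K=\varprojlim_m K_m$ with $K_m(R)=G(S(R)/J_{(m)}(R))_\mu$.

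Second, and more seriously, the assertion that ``under a pro-smooth affine group, fpqc torsors are etale locally trivial'' is not a general fact and is precisely what requires work. Smoothness of each $K_m$ gives etale local sections of the truncated torsor $Q_m$, but to lift a trivialization of $Q_{r+1}$ to one of $Q=\varprojlim Q_m$ you must trivialize each successive $\Ker(\pi_m)$-torsor $Q_{m+1}\to Q_m$. Smooth surjectivity of $\pi_m$ alone does not force $\Ker(\pi_m)$ to have vanishing $H^1$ on affines. The paper's key computation is that over $W_{m-1}(k)$ the ideal $J_{(m)}/J_{(m+1)}$ has square zero, whence $\Ker(\pi_m)$ is the vector group $\Lie(G)$; then, fixing $r$ with $p^rR=0$, for $m\ge r+1$ these vector-group torsors over the affine $Q_m$ are trivial, and the limit trivialization follows. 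Your proposal skips exactly this step.
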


\begin{proof}
Let $K=G(\u W)_\mu$, 
which is an affine group scheme over $W(k)$ by Lemma \ref{Le:XWRgr}.
We show that every fpqc locally trivial $K$-torsor over a 
$W_r(k)$-algebra $R$ is etale locally trivial.
Let $S(R)$ be the graded ring of the frame $\u W(R)$.
For $m\ge 1$ let $J_{(m)}(R)\subseteq S(R)$ 
be the graded ideal given by $v^{m}W(R)$ in every degree.
One verifies easily that this is an ideal, 
using the relation
$v(a)* v^{m}(b)=v(av^{m-1}(b))=v^m(\sigma^{m-1}(a)b)$
in $W(R)$, where $*$ is the product in $S(R)$ in positive degrees.
The quotient $S_{(m)}(R)=S(R)/J_{(m)}(R)$ is a graded ring, 
and $K(R)=G(S(R))_\mu$ is the limit over $m$ of $K_m(R)=G(S_{(m)}(R))_\mu$.
The proof of Lemma \ref{Le:Greenberg-graded} shows that each $K_m$
is a smooth affine group scheme over $W(k)$ of finite type,
and the reduction homomorphism $\pi_m:K_{m+1}\to K_m$ is smooth and surjective.
Over $W_{m-1}(k)$ the kernel of $\pi_m$ can be identified with
the vector group $\Lie(G)$ because if $p^{m-1}R=0$ then
the ideal $J_{(m)}(R)/J_{(m+1)}(R)$ has square zero, 
using the relation $v(v^{m-1}(a)v^{m-1}(b))=p^{m-1}v^{m}(ab)$.
Now if $Q$ is an fpqc $K$-torsor over a $W_r(k)$-algebra $R$,
the associated $K_{r+1}$-torsor $Q_{r+1}$ is smooth over $R$
and thus etale locally trivial. 
The projection $Q_{m+1}\to Q_m$ is a torsor under $\Ker(\pi_m)$,
which is trivial when $m\ge r+1$ since $\Ker(\pi_m)$ is a vector
group and $Q_m$ is affine. Hence if $Q_{r+1}$ is trivial then so is $Q$.
\end{proof}

\begin{Remark}
If $R$ is a $k$-algebra, then 
$G\GDisp^W_\mu\cong\varprojlim_mG\GDisp_\mu^{W_m}$.
Indeed, the groups $G(\u W(R))_\mu$ and $G(W(R))$ are the limits 
of $G(\uW m(R))_\mu$ and $G(W_m(R))$, and one verifies
easily that a $G(\u W)_\mu$-torsor (etale or equivalently fpqc)
is the same as a compatible system of $G(\uW m)_\mu$-torsors.
\end{Remark}

There is also a relative version.
Let $B\to A$ be a PD thickening of $W(k)$-algebras in which
$p$ is nilpotent. 
For an etale $A$-algebra $A'$ let $B'$ be the unique etale $B$-algebra
with $B'\otimes_BA=A'$, and let us write $\bB(A')=B'$ and $\bA(A')=A'$.
Then $\u W(\bB/\bA)$ is an etale sheaf of frames on the category of
affine etale $A$-schemes.
We define a fibered category over this category
\begin{equation}
G\GDisp_\mu^{W(B/A)}=[G(W(\bB/\bA))/G(\u W(\bB/\bA))_\mu],
\end{equation}
the quotient groupoid of the action \eqref{Eq:action} 
with respect to the etale topology,
and call $G\GDisp_\mu(\u W(B/A))=G\GDisp_\mu^{W(B/A)}(\Spec A)$
the groupoid of $G$-displays of type $\mu$ over $\u W(B/A)$.

\subsection{Orthogonal displays}
\label{Se:orth-disp}

Let $\u S$ be a frame for $R$ and assume that $p\ge 3$. 

\begin{Defn}
\label{Def:orth-disp}
An orthogonal graded module over $S$ is a finite projective graded $S$-module
$M$ with a perfect symmetric bilinear form $\beta:M\times M\to S$
of degree zero.
An orthogonal display over $\u S$ is a display $\u M$ with
a perfect symmetric bilinear form $\beta:\u M\times\u M\to \u S$. 
\end{Defn}

We fix $n$, the rank of $M$.
Let $\mu=(\mu_1,\ldots,\mu_n)\in\ZZ^n$ with $\mu_1\ge\ldots\ge\mu_n$.
We call $\mu$ an orthogonal type if $\mu_i+\mu_{n+1-i}=0$ for all $i$.
The type of an orthogonal graded module or display is orthogonal.

Let $\psi:\ZZ^n\times\ZZ^n\to\ZZ$ be the symmetric bilinear form 
$\psi(x,y)=x^t Jy$ with 
\[
J=\left(\begin{smallmatrix}&& 1\\ & \iddots \\ 1\end{smallmatrix}\right).
\]
Let $G=\Ogr(\psi)\subseteq\GL_n$ be the orthogonal group of $\psi$,
so $G(R)$ is the set of all $A\in M_n(R)$ with $A^tJA=J$. 
For an orthogonal type $\mu$, the associated cocharacter 
\[
\mu:\Gm\to\GL_n,\qquad
x\mapsto\diag(x^{\mu_1},\ldots,x^{\mu_n})
\]
factors as $\mu:\Gm\to G=\Ogr(\psi)$. 

\begin{Prop}
\label{Pr:orth-disp}
If $\u S$ is a $p$-adic frame as in Definition \ref{Def:p-adic-frame},
the category of orthogonal displays of type $\mu$ over $\u S$
is equivalent to the category of $\Ogr(\psi)$-displays 
of type $\mu$ over $\u S$ as in Definition \ref{Def:G-disp-nil}.
\end{Prop}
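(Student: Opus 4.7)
The plan is to exhibit mutually quasi-inverse functors between the two groupoids using the torsor description of $G$-displays from Remark \ref{Rk:G-disp}. Equip the standard graded $S$-module $N_\mu=\bigoplus_iS(-\mu_i)$ with the perfect symmetric bilinear form $\beta_0$ of degree zero induced by $\psi$; this is well-defined precisely because $\mu$ is an orthogonal type. Under the identification of Example \ref{Ex:disp-GLn}, the inclusion $\Ogr(\psi)\subset\GL_n$ identifies $\Ogr(\psi)(S)_\mu$ with the subgroup $\uAut_S^0(N_\mu,\beta_0)\subset\uAut_S^0(N_\mu)$ of orthogonal graded automorphisms, and similarly after base change to any etale $R$-algebra.

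The forward functor $\Phi$ sends an $\Ogr(\psi)$-display, presented in the form $(Q,\gamma)$ with $\gamma:Q^\sigma\xrightarrow\sim Q^\tau$, to the associated bundle $M=Q\times^{\Ogr(\psi)(\bS)_\mu}N_\mu$ with its induced perfect symmetric form $\beta$ and the $\sigma$-linear isomorphism $F:M\to M^\tau$ coming from $\gamma$; existence of $M$ as a graded $S$-module and the descent of the extra structure follow from Lemma \ref{Le:descent-grad-mod-S}. The backward functor $\Psi$ sends $(\u M,\beta)$ to $(Q,\gamma)$ where $Q=\uIsom_{\bS}^0((N_\mu,\beta_0),(M,\beta))$ is the etale sheaf of orthogonal graded isomorphisms on $\Spec R$ and $\gamma$ is induced by $F$ via the rule $F\circ\xi^\sigma=\xi^\tau\circ\gamma$ for sections $\xi$ of $Q$.

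The key step, and the main technical obstacle, is to show that $Q$ is indeed an etale $\Ogr(\psi)(\bS)_\mu$-torsor, i.e.\ that every orthogonal graded $\bS$-module of type $\mu$ is etale-locally isomorphic to $(N_\mu,\beta_0)$. By Lemma \ref{Le:proj-mod-std} together with Remark \ref{Rk:mod-std-p-adic}, applicable because $\u S$ is $p$-adic (so $S_0$ is $p$-adic), the underlying graded module is standard projective, so the problem reduces to the graded $S_0$-level and further, via the standard lifting along the nilpotent kernel of $S_0\to R$, to $R$ itself. Etale-locally on $\Spec R$ the graded projective module becomes free of type $\mu$, and since $p\ge 3$ and $p\in\Rad(S_0)$ force $2\in S_0^\times$, the orthogonal group $\Ogr(\psi)$ is smooth over $S_0$; its smoothness together with the transitivity of its action on perfect symmetric degree-zero forms on $N_\mu$ (compatible with the grading because $\mu$ is orthogonal) then produces etale-local $\Ogr(\psi)(\bS)_\mu$-equivalences with $\beta_0$. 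Once $Q$ is known to be a torsor, $\Phi\circ\Psi\cong\id$ and $\Psi\circ\Phi\cong\id$ follow formally by evaluating the associated-bundle and frame-sheaf constructions on an etale cover that trivializes $Q$, where the comparison with the Frobenius data is tautological.
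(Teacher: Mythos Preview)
Your overall framework matches the paper's proof: the backward functor via $Q=\uIsom^0((N_\mu,\beta_0),(M,\beta))$, the forward functor via the associated bundle, and the identification of the torsor property of $Q$ as the key step are exactly how the paper proceeds (using Lemmas \ref{Le:N-psi} and \ref{Le:orth-loc-isom}).

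The gap is in your justification of that key step. You invoke smoothness of $\Ogr(\psi)$ over $S_0$ together with ``transitivity of its action on perfect symmetric degree-zero forms on $N_\mu$'', but neither ingredient does what you need. A degree-zero form $\beta$ on $N_\mu$ is a matrix $B=(b_{ij})$ with $b_{ij}\in S_{-\mu_i-\mu_j}$, and bringing it to $J$ requires an element of $\GL_n(S)_\mu$, not of $\Ogr(\psi)(S_0)$ or $\GL_n(S_0)$. Smoothness of $\Ogr(\psi)$ as a group scheme over $S_0$ lets you lift points along $S_0\to R$, but the map $\tau:\Ogr(\psi)(S)_\mu\to\Ogr(\psi)(S_0)$ is not surjective, so a solution over $S_0$ does not give a graded solution over $S$. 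Your ``reduce to the graded $S_0$-level'' and ``further to $R$'' steps are therefore not justified: reduction via $\rho:S\to R$ kills all components of $B$ in nonzero degree, and there is no mechanism here to promote an ungraded orthogonal isomorphism back to a graded one. The transitivity claim itself is also unproved; even granting it over $R$ after etale localization, you would need formal smoothness of the functor $R'\mapsto\Ogr(\psi)(\bS(R'))_\mu$ over $R$, which is not established for a general $p$-adic frame.

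The paper handles this step by a direct computation (Lemma \ref{Le:orth-loc-isom}): after trivializing $M$ as a graded module, one explicitly constructs a hyperbolic pair $e_1\in M_d$, $e_2\in M_{-d}$ with $\beta(e_1,e_2)=1$ and $\beta(e_i,e_i)=0$ by solving a quadratic equation in $S_0$ (using that $tS_1$ maps to a nil-ideal in each $S_0/p^m$, that $S_0$ is $p$-adic, and that $2\in S_0^\times$), splits off the resulting hyperbolic plane orthogonally, and reduces $\mu$ inductively until $\mu=(0,\ldots,0)$; the base case is then an iterative argument in $M_n(S_0)$ converging $p$-adically. This concrete induction is what replaces your appeal to smoothness and transitivity.
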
 

The proof is quite formal, using the following two lemmas.

\begin{Lemma}
\label{Le:N-psi}
Let $N_\mu=\bigoplus_iS(-\mu_i)$ as a graded $S$-module,
equipped with the bilinear form $\psi:N_\mu\times N_\mu\to S$ 
defined by $\psi(a,b)=a^t Jb$.
Then 
\[
G(S)_\mu=\Aut^{0}_S(N_\mu,\psi),
\]
the group of automorphisms of the graded module that preserve $\psi$;
see \eqref{Eq:G(S)-mu} for the definition of $G(S)_\mu$.
Moreover, $G(S_0)$ is bijective to the set 
$\FFF$ of $\sigma$-linear maps $F:N_\mu\to (N_\mu)^\tau$ 
such that $(N_\mu,F,\psi)$ is an orthogonal display. 
The conjugation action of $g\in\Aut^{0}_S(N_\mu,\psi)$ 
on $\FFF$
by $F\mapsto \tau(g)^{-1}\circ F\circ g$ 
corresponds to the action \eqref{Eq:action}.
\end{Lemma}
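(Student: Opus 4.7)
The plan is to reduce all three assertions to the matrix description of graded morphisms and of $\GL_n$-displays of type $\mu$ on $N_\mu$ set up in \S\ref{Se:disp-change-basis} and Example \ref{Ex:disp-GLn}, then add the preservation of $\psi$ as one further linear-algebraic condition.

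For the first identification, Lemma \ref{Le:AutM} together with Example \ref{Ex:disp-GLn} gives $\GL_n(S)_\mu = \Aut_S^{0}(N_\mu)$: such an element is a matrix $A = (a_{ij})$ with $a_{ij}\in S_{\mu_j-\mu_i}$ whose diagonal blocks are invertible. The subgroup $G = \Ogr(\psi)\subseteq\GL_n$ is cut out by $A^t J A = J$, which is exactly the condition that the graded automorphism $A$ of $N_\mu$ preserve the form $\psi(x,y)=x^t J y$; intersecting yields $G(S)_\mu=\Aut_S^{0}(N_\mu,\psi)$.

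For the bijection $G(S_0)\cong\FFF$, \S\ref{Se:disp-change-basis} gives a bijection between $\sigma$-linear isomorphisms $F\colon N_\mu\to N_\mu^\tau$ and matrices $B\in\GL_n(S_0)$ via the canonical identifications $N_\mu^\sigma\cong S_0^n\cong N_\mu^\tau$. The task is to verify that the display-compatibility condition $\sigma\circ\psi=\psi^\tau\circ(F\times F)$ translates to $B^tJB=J$. This works because $\psi$ has degree zero: $\psi(e_i,e_j)=J_{ij}$ is nonzero only when $i+j=n+1$, and then $\mu_i+\mu_j=0$ by orthogonality of $\mu$, so $J_{ij}\in S_0$. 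Consequently both $\sigma\circ\psi$ and $\psi^\tau$ are represented on $S_0^n$ by the same integral matrix $J$, which is fixed by any ring homomorphism; the compatibility then reduces to $B^tJB=J$, i.e.\ $B\in G(S_0)$, and the invertibility of $B$ is automatic since $J$ is invertible.

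For the action, the matrix formula \eqref{Eq:Conj} in \S\ref{Se:disp-change-basis} shows that $F\mapsto\tau(g)^{-1}\circ F\circ g$ corresponds on $\GL_n(S_0)$ to $B\mapsto\tau(A)^{-1}B\sigma(A)$, which is precisely the right action \eqref{Eq:action}. The main obstacle will be the bookkeeping in the second step: one must fix compatible identifications $N_\mu^\tau\cong S_0^n\cong N_\mu^\sigma$ and extend $\psi$ to $\psi^\tau$ correctly across the shifts $S(-\mu_i)$ appearing in $N_\mu$, so that the form compatibility really does collapse to the single matrix equation $B^tJB=J$. Once these identifications are chosen once and for all, the rest is formal.
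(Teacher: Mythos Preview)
Your proposal is correct and follows essentially the same approach as the paper: intersect $\GL_n(S)_\mu=\Aut_S^0(N_\mu)$ with the orthogonality condition $A^tJA=J$ for the first assertion, then use the identifications $N_\mu^\sigma\cong S_0^n\cong N_\mu^\tau$ (with $\psi$ given by $J$ on both) to reduce the second and third assertions to the matrix description of \S\ref{Se:disp-change-basis}. The paper's proof is more terse but identical in substance; your explicit verification that $\psi^\sigma$ and $\psi^\tau$ are both represented by $J$ (because $J_{ij}\ne 0$ forces $\mu_i+\mu_j=0$) is exactly the bookkeeping the paper leaves implicit when it says ``the bilinear form $\psi$ is given by $J$ on these modules.''
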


\begin{proof}
We have $G(S)=\Aut_S(N_\mu,\psi)$ and
$\GL_n(S)_\mu=\Aut_S^{0}(N_\mu)$ 
as in Example \ref{Ex:disp-GLn},
hence
\[
G(S)_\mu=G(S)\cap\GL_n(S)_\mu
=\Aut_S(N,\psi)\cap\Aut_S^{0}(N_\mu)
=\Aut_S^{0}(N_\mu,\psi),
\]
which proves the first assertion.
The set $\FFF$ is bijective to the set of isomorphisms 
of orthogonal $S_0$-modules $(N_\mu^\sigma,\psi)\cong(N_\mu^\tau,\psi)$.
But $N_\mu=S^n$ as an $S$-module and thus $N_\mu^\tau=S_0^n=N_\mu^\sigma$,
and the bilinear form $\psi$ is given by $J$ on these modules.
The lemma follows.
\end{proof}

\begin{Lemma}
\label{Le:orth-loc-isom}
If $\u S$ is a $p$-adic frame,
two orthogonal $S$-modules of the same type are isomorphic 
etale locally in $\Spec R$, i.e.\ they become isomorphic over
$S'=\bS(R')$ for a faithfully flat etale homomorphism $R\to R'$,
using the notation of\/ \S \ref{Se:etale-loc}. 
\end{Lemma}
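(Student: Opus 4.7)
The plan is: (1) reduce to the $S_0$-level via the standard projective decomposition; (2) trivialize the underlying graded modules Zariski-locally on $\Spec R$; (3) classify orthogonal forms etale-locally over $R$ using smoothness of the orthogonal group (which requires $p\ge 3$); (4) lift from $R$ to $S_0$ using smoothness and the $p$-adic structure of $S_0$.

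By Remark \ref{Rk:mod-std-p-adic}, each $M_i$ is standard projective, so $M_i=L_i\otimes_{S_0}S$ for a finite projective graded $S_0$-module $L_i$, and $\beta_i$ restricts to a perfect degree-zero orthogonal form on $L_i$. Any graded $S_0$-module isomorphism $(L_1,\beta_1)\cong(L_2,\beta_2)$ extends by $-\otimes_{S_0}S$ to the desired isomorphism $(M_1,\beta_1)\cong(M_2,\beta_2)$, so it suffices to work with the $L_i$. Using Corollary \ref{Co:NAK-proj} to lift isomorphisms of reductions modulo the homomorphism $\rho$ from \eqref{Eq:rho}, we reduce Zariski-locally on $\Spec R$ to the case $L_1=L_2=L_\mu:=\bigoplus_j S_0(-\mu_j)$.

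Over $R$, the reductions $(\bar L_\mu,\bar\beta_1)$ and $(\bar L_\mu,\bar\beta_2)$ are isomorphic etale-locally. The perfect degree-zero forms $\bar\beta_i$ decompose into pairings $\bar L_{\mu,i}\times\bar L_{\mu,-i}\to R$: a nondegenerate pairing of free modules of equal rank for $i\ne 0$ (classified by a $\GL_r$-torsor) and a nondegenerate symmetric form on $\bar L_{\mu,0}$ (classified by an $\Ogr$-torsor). All such torsors are etale-locally trivial because the respective group schemes are smooth over $R$, using $p\ge 3$ in the orthogonal case.

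To lift from $R$ to $S_0$, consider the sheaf $\uIsom((L_\mu,\beta_1),(L_\mu,\beta_2))$ on affine etale $R$-schemes, which is a pseudotorsor under the affine $\bS_0$-group scheme $\uAut^0(L_\mu,\beta_1)$, a form of $(\Ogr(\psi))_\mu$ and smooth for $p\ge 3$. Since $p$ is nilpotent in $R$, the surjection $S_0\to R$ factors as $S_0\twoheadrightarrow S_0/p^m\twoheadrightarrow R$ for some $m$; the second map has nilpotent kernel by Remark \ref{Rk:S0pRp-nil}, and the tower $S_0=\varprojlim_n S_0/p^n$ has transition kernels of square zero for $n\ge m$. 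Hensel-style lifting by smoothness then produces a section over $S_0/p^m$, inductively over each $S_0/p^n$, and finally over $S_0$ by $p$-adic completeness. Etale covers of $R$ lift to etale covers of $S_0/p^i$ by Lemma \ref{Le:etale-bc-frame}, so the entire argument is compatible with etale localization on $\Spec R$.

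The main obstacle is the smooth-lifting step: one must verify that $\uAut^0(L_\mu,\beta_1)$, which is a ``graded'' variant of $\Ogr(\psi)$ taken relative to the frame $\u S$, is indeed smooth and supports Hensel-type lifting through the $p$-adic tower $S_0=\varprojlim_n S_0/p^n$. This ultimately reduces to the standard representability and smoothness results for the $\Gm$-equivariant functors (Lemma \ref{Le:X(S)-sheaf}) combined with the classical classification of nondegenerate symmetric bilinear forms over henselian local rings when $2$ is invertible.
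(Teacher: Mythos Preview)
Your step (1) does not do what you claim. The form $\beta$ on $M=L\otimes_{S_0}S$ does not restrict to an $S_0$-valued form on $L$: for $x\in L_i$, $y\in L_j$ one has $\beta(x,y)\in S_{i+j}$, and this lies in $S_0$ only when $i+j=0$. So the problem is not one of quadratic $S_0$-forms on $L$; the components of $\beta$ valued in $S_n$ for $n\ne 0$ are genuine data that an isomorphism must match. Your step (3) inherits this: what you call ``lifting from $R$ to $S_0$'' is really lifting from the $\rho$-reduction back to the full graded $S$-module, and the sheaf $\uAut^0(L_\mu,\beta_1)$ is not an $\bS_0$-group scheme in any evident sense---its $R'$-points involve all of $\bS(R')$, not just $\bS_0(R')$. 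You correctly flag this as the ``main obstacle'' in your final paragraph, but that obstacle is precisely the substance of the lemma and you do not address it; Lemma~\ref{Le:X(S)-sheaf} gives only the sheaf property, not representability or any smoothness over a base that would make Hensel lifting go through.

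The paper instead argues directly by induction on $d=\mu_1$. After Zariski localization so that $M=\bigoplus_iS(-\mu_i)$, one finds (for $d>0$) elements $e_1\in M_d$, $e_2\in M_{-d}$ with $\beta(e_1,e_2)=1$ and then adjusts them inside $M_d$, $M_{-d}$ to achieve $\beta(e_1,e_1)=\beta(e_2,e_2)=0$: this reduces to an equation $a_0x_0^2+2x_0+c_0=0$ in $S_0$ with $a_0\in tS_1$, uniquely solvable because $tS_1$ is nilpotent in each $S_0/p^m$ and $2$ is a unit. The hyperbolic plane $Se_1\oplus Se_2\cong S(-d)\oplus S(d)$ then splits off orthogonally. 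After the induction only the case $\mu=(0,\ldots,0)$ remains, where $M=S^n$ and $\beta$ is given by a symmetric matrix $B\in\GL_n(S_0)$; here an \'etale localization arranges $B\equiv J\bmod tS_1$, and an explicit Newton iteration $B\mapsto A^tBA$ with $A=1-JC/2$ (where $C=B-J$) converges to $J$ by $p$-adic completeness of $S_0$.
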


\begin{proof}
It suffices to show that any orthogonal $S$-module $(M,\beta)$ of type $\mu$
is isomorphic to $(N_\mu,\psi)$ etale locally in $\Spec R$.
After localizing $R$ we may assume that $M=\bigoplus_iS(-\mu_i)$ 
as a graded $S$-module, thus $M^*=\bigoplus_iS(\mu_i)$.
Assume first that $d:=\mu_1>0$. 
Let $r$ be the multiplicity of $d$ in $\mu$ and write
$M=M_1\oplus M_2\oplus M_3$ as graded modules
with $M_1\cong S(-d)^r$ and $M_2\cong S(d)^r$.
The perfect bilinear form $\beta$ corresponds to an isomorphism
$\beta':M\xrightarrow\sim M^*$. The composition
\begin{equation}
\label{Eq:incl-beta-pr}
M_1\xrightarrow{incl} M\xrightarrow{\beta'} M^*\xrightarrow{pr}(M_2)^*
\end{equation}
becomes an isomorphism under the base change by $\rho:S\to R$,
thus \eqref{Eq:incl-beta-pr} is an isomorphism by Corollary \ref{Co:NAK-proj}.
We can choose homogeneous elements $e_1\in (M_1)_d$ 
and $e_2\in (M_2)_{-d}$ with $\beta(e_1,e_2)=1$.
We claim that they can be modified inside $M_d$ and $M_{-d}$ 
such that in addition $\beta(e_1,e_1)=\beta(e_2,e_2)=0$.

First, there is a unique $x\in S_{-2d}$ such that $e_2'=e_2+xe_1$ 
satisfies $\beta(e_2',e_2')=0$. 
Indeed, let $a=\beta(e_1,e_1)\in S_{2d}$ and $c=\beta(e_2,e_2)\in S_{-2d}$.
We have to solve the equation $ax^2+2x+c=0$ in $S_{-2d}$.
If we write $x=t^{2d}x_0$ and $c=t^{2d}c_0$ and $a_0=t^{2d}a$,
this is equivalent to $a_0x_0^2+2x_0+c_0=0$ in $S_0$.
Since $a_0\in tS_1$ and since the image of $tS_1$ in $S_0/p^m$ 
is nilpotent for each $m$,
there is a unique solution $x_0$; recall that $p\ge 3$.
Now $\beta(e_1,e_2')=ax+1$ is a unit since $ax\in tS_1$.
Thus after modifying $e_1$ and $e_2$ we can assume that 
$\beta(e_1,e_2)=1$ and $\beta(e_2,e_2)=0$.
Finally, there is a unique $y\in S_{2d}$ such that $e_1'=e_1+ye_2$ 
satisfies $\beta(e_1',e_1')=0$. Indeed, if again $a=\beta(e_1,e_2)$,
this is equivalent to $a+2y=0$ in $S_{2d}$. This proves the claim.

Let $M'=Se_1\oplus Se_2$. It follows that $(M',\beta)$ is isomorphic to
$(S(-d)\oplus S(d),\psi)$ where $\psi$ is the standard form given by
the $2\times 2$-version of $J$.
Since the composition 
\[
M'\xrightarrow i M\xrightarrow{\beta'}M^*\xrightarrow{i^*}M'^*
\]
is an isomorphism, we have an orthogonal decomposition $M=M'\oplus M'^\perp$.

By induction the lemma is reduced to the case $d=0$, i.e.\ $\mu=(0,\ldots,0)$.
Thus $M=S^n$, and $\beta$ is given by a symmetric invertible matrix $B\in M_n(S_0)$.
After etale localization we may assume that $B$ maps to $J$ under 
$M_n(S_0)\to M_n(R)$.
Let $B=J+C$, let $A=1-JC/2$, and form $B'=A^tBA=J+C'$.
If $\Fa\subset S_0$ is the ideal generated by the coefficients of $C$,
then the coefficients of $C'$ lie in $\Fa^2$. 
Since $\Fa$ maps to a nilpotent ideal in each $S_0/p^m$,
it follows that the iteration of $B\mapsto B'$ converges to $J$, and the lemma is proved.
\end{proof}

\begin{proof}[Proof of Proposition \ref{Pr:orth-disp}]
Let $(M,\beta)$ be an orthogonal graded module of type $\mu$ over $\u S$,
let $(\bM,\beta)$ be the etale sheaf of orthogonal graded $\bS$-modules
defined by $\bM(R')=M\otimes_S\bS(R')$,
and let 
$
Q=\uIsom((\bN_\mu,\psi),(\bM,\beta))
$
be the etale sheaf of isomorphisms of orthogonal graded modules.
Then $Q$ is a $G(\bS)_\mu$-torsor by Lemma \ref{Le:orth-loc-isom},
and the assignment $(M,\beta)\mapsto Q$ is an equivalence by
etale descent of finite projective graded modules, 
Lemma \ref{Le:descent-grad-mod-S}.
An extension of $(M,\psi)$ to an orthogonal display $(M,F,\psi)$
corresponds to an isomorphism of orthogonal modules
$(M^\tau,\psi)\cong(M^\sigma,\psi)$ over $S_0$, 
which is the same as an isomorphism of the associated
$G(\bS_0)$-torsors $\gamma:Q^\tau\cong Q^\sigma$,
and $(Q,\gamma)$ is a $G$-display of type $\mu$;
see Remark \ref{Rk:G-disp}.
\end{proof}

\begin{Remark}
Similarly one verifies that 
orthogonal displays over $\u W(R)$ are equivalent
to $\Ogr(\psi)$-displays over $\u W(R)$ as in \S \ref{Se:G-disp-W}.
\end{Remark}

\section{Structure of the display group}

Let $(G,\mu)$ be a display datum 
with $G=\Spec A'$, and $A=A'\otimes_{\ZZ_p}W(k)$.

\subsection{Weight subgroups}

Recall that $x\in\Gm$ acts on $G_{W(k)}$ by $g\mapsto\mu(x)^{-1}g\mu(x)$. 
We consider the maximal subgroup schemes 
$P^+$ and $P^-$ of $G_{W(k)}$ on which $\Gm$
acts by non-negative resp.\ non-positive weights. 
The intersection $L=P^+\cap P^-$ is the centralizer of $\mu$.
Explicitly, 
let $A=\bigoplus_{n\in\ZZ} A_n$ be the weight decomposition for the $\Gm$-action,
let $(A_{>0})$ and $(A_{<0})$ be the ideals generated by the elements of
positive or negative degree, and define
\begin{equation}
\label{Eq:Ppm}
A^+=A/(A_{<0}),\qquad A^-=A/(A_{>0}),\qquad \bar A=A/(A_{>0}\cup A_{<0}).
\end{equation}
Then $P^{\pm}=\Spec A^{\pm}$ and $L=\Spec\bar A$, 
and these are indeed subgroups of $G$ because the co-multiplication
$A\to A\otimes A$ preserves the degree and thus maps $A_{>0}$ into
$A_{>0}\otimes A+A\otimes A_{>0}$, and similarly for $A_{<0}$.

There are unique $\Gm$-equivariant homomorphisms $P^{\pm}\to L$
which extend the identity of $L$.
Namely, since the $\ZZ$-graded rings $A^+$ and $A^-$ 
are concentrated in non-negative resp.\ non-positive degrees,
we have $(A^+)_0=A^+/(A^+)_{>0}=\bar A$.
Let $U^{\pm}\subseteq P^{\pm}$ be the kernel of $P^\pm\to L$.
The multiplication map $U^{\pm}\times L\to P^{\pm}$ is an isomorphism of schemes.

The group schemes $P^\pm$ and $U^\pm$ and $L$ are the group schemes
$P_G(\mp\mu)$ and $U_G(\mp\mu)$ and $Z_{G_{W(k)}}(\mu)$ of \cite{CGP}.
Since $G$ is smooth, the multiplication map 
\[
U^-\times L\times U^+\to G_{W(k)}
\]
is an open immersion by \cite[Proposition 2.1.8 (3)]{CGP}, 
in particular the group schemes $L$ and $U^\pm$ and $P^\pm$
over $W(k)$ are smooth.

\begin{Lemma}
\label{Le:log-U}
Let $(G,\mu)$ be a display datum.
There are $\Gm$-equi\-variant morphisms 
$\log^\pm: U^\pm\to V(\Lie U^\pm)$ 
which induce the identity on the Lie algebras.
These morphisms are necessarily isomorphisms of schemes.
\end{Lemma}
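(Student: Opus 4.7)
The plan is to construct $\log^+$ explicitly as the map dual to a graded $W(k)$-algebra isomorphism; the construction of $\log^-$ is symmetric. Write $B^+$ for the coordinate ring of $U^+$. Since $A^+$ is non-negatively graded with $A^+_0=\bar A$ and the augmentation $\bar A\to W(k)$ corresponds to the identity of $L$, one has $B^+=A^+\otimes_{\bar A}W(k)$, a positively graded $W(k)$-algebra with $B^+_0=W(k)$ and augmentation ideal $\Fm=B^+_{>0}$. With the paper's convention $(\lambda\ast f)(x)=f(x\ast\lambda)$, the natural identification of $\Fm/\Fm^2$ with the coordinate generators of $V(\Lie U^+)$ is a $\Gm$-equivariant isomorphism of graded $W(k)$-modules, both concentrated in strictly positive degrees (the degrees of $\Lie U^+$).

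First, I would choose a graded $W(k)$-linear splitting $s:\Fm/\Fm^2\to\Fm$ of the canonical projection. Such a splitting exists because $\Fm/\Fm^2$ is finite projective over $W(k)$ by smoothness of $U^+$, and each graded component is a direct summand, hence free over the DVR $W(k)$. Extending $s$ by the universal property of the symmetric algebra yields a graded $W(k)$-algebra homomorphism
\[
\varphi:P:=\Sym_{W(k)}(\Fm/\Fm^2)\longrightarrow B^+,
\]
which corresponds to a $\Gm$-equivariant morphism $\log^+:U^+\to V(\Lie U^+)$ inducing the identity on Lie algebras. Surjectivity of $\varphi$ is a graded Nakayama argument: by induction on $n\ge 0$, any element $b\in B^+_n$ satisfies $b-s(\bar b)\in(\Fm^2)_n=\sum_{i+j=n,\,i,j\ge 1}\Fm_i\Fm_j$, and the lower-degree factors lie in the image of $\varphi$ by induction, so $b$ does as well.

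For injectivity I would argue by dimension. Both $P$ and $B^+$ are flat, finitely generated $W(k)$-algebras of Krull dimension $1+n$ with $n=\rk_{W(k)}\Lie U^+$, and both are integral domains: $P$ is a polynomial ring, while $B^+$ is $W(k)$-flat with generic fibre the coordinate ring of the smooth geometrically connected variety $U^+\otimes_{W(k)}W(k)[1/p]$ (connected because the $\Gm$-action with strictly positive weights on the tangent space contracts $U^+$ to its identity section), so $B^+$ embeds into $B^+[1/p]$. A surjection of integral domains of equal Krull dimension over the universally catenary ring $W(k)$ has kernel a prime of height zero, hence trivial. Consequently $\varphi$ is an isomorphism, which also proves the lemma's final assertion that \emph{any} $\Gm$-equivariant morphism inducing the identity on Lie algebras must be an isomorphism of schemes, since it must correspond to some such $\varphi$. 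The main obstacle is this injectivity step, in particular verifying that $B^+$ is a domain of the correct Krull dimension; once the $\Gm$-contraction yields geometric connectedness and smoothness yields the dimension, the rest is routine.
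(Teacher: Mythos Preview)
Your construction and surjectivity argument are essentially identical to the paper's: choose a graded $W(k)$-linear section $s:\Fm/\Fm^2\to\Fm$, extend to $\varphi:\Sym(\Fm/\Fm^2)\to B^+$, and prove surjectivity by induction on the degree. The only difference is in the injectivity step you flagged as the main obstacle: rather than arguing that $B^+$ is an integral domain of the correct Krull dimension, the paper observes that surjectivity of $\varphi$ makes $\log^+$ a closed immersion of a smooth $W(k)$-scheme into affine space which is bijective on Lie algebras, hence an isomorphism (closed immersion of smooth schemes of equal relative dimension into a connected target); this sidesteps the need to establish integrality of $B^+$ via the $\Gm$-contraction argument, though your route is also correct.
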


Note that every $\Gm$-equivariant morphism $U^\pm\to V(\Lie U^\pm)$ preserves the
unit section and therefore induces a map of the Lie algebras.
In general the morphisms $\log^\pm$ need not be unique; 
cf.\ Lemma \ref{Le:log-minuscule} below.

\begin{proof}
By symmetry it suffices to consider $U^+$.
We have $U^+=\Spec B$ for a graded $W(k)$-algebra $B$ with $B_0=W(k)$
and augmentation ideal $I=B_{>0}$.
The $W(k)$-module $I/I^2$ is free because $U^+$ is smooth.
Let $B'=\Sym^*(I/I^2)$.
The grading of $I$ induces a grading of $I/I^2$,
and $B'$ becomes a graded $W(k)$-algebra with
with $B'_0=W(k)$. 
Let $I'=B'_{>0}$. Then $I'/I'^2=I/I^2$.

A $\Gm$-equivariant morphism $\log:U^+\to V(\Lie U^+)$ 
corresponds to a
homomorphism of graded $\ZZ_p$-algebras $\lambda:B'\to B$,
which corresponds to a
linear map of graded $W(k)$-modules $s:I/I^2\to I$,
and $\lambda$ induces the identity on the Lie algebras 
iff $s$ is a section of the projection $I\to I/I^2$.
In this case $\lambda$ is surjective:
If $n\ge 1$ is given such that $B'_m\to B_m$ is surjective for all $m<n$,
then the degree $n$ component of $I^2$ lies in the image of $\lambda$,
and it follows that $B'_n\to B_n$ is surjective. 
Thus $\log=\Spec\lambda$ is a closed immersion of a smooth scheme
into an affine space over $\ZZ_p$ 
which induces a bijective map on the Lie algebras.
It follows that $\log$ is an isomorphism.
\end{proof}

\subsection{Decomposition of the display group}

Let $\u S$ be a frame over $W(k)$.
For a subgroup scheme $H\subseteq G_{W(k)}$ 
which is stable under the
conjugation action of $\Gm$ via $\mu$
we consider the group of $\Gm$-equivariant sections $H(S)_\mu=H(S)^{0}$.
Then
\[
P^\pm(S)_\mu=U^\pm(S)_\mu\times L(S)_\mu=U^\pm(S)_\mu\times L(S_0).
\]
The ring homomorphism $\tau:S\to S_0$ induces $\tau:H(S)_\mu\to H(S_0)$.

\begin{Lemma}
\label{Le:tau-GSmu-GS0}
If $H\subseteq P^-$ then
$\tau:H(S)_\mu\to H(S_0)$ is bijective.
The inverse image of\/ $U^+(S_0)$ under $\tau:G(S)_\mu\to G(S_0)$ 
is equal to $U^+(S)_\mu$.
\end{Lemma}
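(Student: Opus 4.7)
The plan is to reformulate both assertions in terms of graded ring homomorphisms from the coordinate ring of the relevant subgroup scheme to $S$, using three features of the frame: $\tau_0 = \id$; $\tau_{-n}: S_{-n} \to S_0$ is bijective for $n \ge 1$; and $S_{\le 0} = S_0[t]$ with $\tau(t) = 1$ by Remark \ref{Rk:frame-recover}.

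For the first assertion, write $H = \Spec C$ with $H \subseteq P^-$, so $C$ is a graded quotient of $A^-$ and hence concentrated in non-positive degrees. A graded ring homomorphism $C \to S$ automatically factors through $S_{\le 0} = S_0[t]$, and composition with $\tau$ amounts to the specialization $t \mapsto 1$. I would prove bijectivity of the resulting map $H(S)_\mu \to H(S_0)$ by writing down the inverse explicitly: given $\psi : C \to S_0$, set $\phi(c) = \psi(c)\,t^n$ for $c \in C_{-n}$; multiplicativity follows from $\phi(cc') = \psi(cc')\,t^{n+n'} = \phi(c)\phi(c')$, and the two compositions with $\tau$ are clearly identities.

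For the second assertion, the inclusion $U^+(S)_\mu \subseteq \tau^{-1}(U^+(S_0))$ is automatic by functoriality of $\tau$. Conversely, let $g \in G(S)_\mu$ correspond to a graded ring homomorphism $\phi : A \to S$ with $\tau \circ \phi$ factoring through the coordinate ring of $U^+$. First, I would show $\phi$ vanishes on $A_{<0}$: for $a \in A_n$ with $n < 0$, $\phi(a) \in S_n$ satisfies $\tau_n(\phi(a)) = 0$ because $\tau \circ \phi$ kills $A_{<0}$, and bijectivity of $\tau_n$ then forces $\phi(a) = 0$. Hence $\phi$ descends to a graded ring homomorphism $\phi^+ : A^+ \to S$ with image in $S_{\ge 0}$. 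Second, a ring homomorphism $A^+ \to S$ defines an $S$-point of $U^+$ iff its restriction to the degree zero part $\bar A \subseteq A^+$ equals the augmentation $\varepsilon$; but $\phi^+|_{\bar A}$ lands in $S_0$, where $\tau_0 = \id$ gives $\phi^+|_{\bar A} = \tau \circ \phi^+|_{\bar A} = \varepsilon$ by hypothesis, so $g \in U^+(S)_\mu$.

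There is no serious obstacle here; the argument is essentially bookkeeping on the grading. The substantive observation is that $\tau$ loses no information in the degrees that matter for this lemma: it is bijective in negative degrees (used to kill $\phi$ on $A_{<0}$) and the identity in degree zero (used to transfer the augmentation condition on $\tau(g)|_{\bar A}$ back to $g|_{\bar A}$).
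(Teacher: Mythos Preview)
Your proof is correct and rests on the same observation as the paper's: $\tau$ is bijective in each non-positive degree (the identity in degree~$0$), so nothing is lost when one only needs to control elements of $A_{\le 0}$ or $I_{\le 0}$. The paper packages this by factoring $\tau$ as $S\xrightarrow{\tilde\tau}\tilde S\xrightarrow{\alpha}S_0$ with $\tilde S=S_0[t,t^{-1}]$, so that $\alpha$ is automatically bijective on $H(\tilde S)_\mu$ for any $H$, and $\tilde\tau$ is bijective in the degrees that matter; your argument unpacks the same content without the auxiliary ring, writing the inverse explicitly and splitting the $U^+$ condition into ``kill $A_{<0}$'' plus ``agree with $\varepsilon$ on $(A^+)_0=\bar A$''.
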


\begin{proof}
Let $\tilde S=S_0[t,t^{-1}]$ as a graded $S_0$-algebra with $t$
in degree $-1$. The ring homomorphism $\tau:S\to S_0$ factors into
homomorphisms of $S_0$-algebras
\[
S\xrightarrow{\;\tilde\tau\;}\tilde S\xrightarrow{\;\alpha\;} S_0
\]
where $\tilde\tau$ preserves the grading and $\tilde\tau(t)=t$, while $\alpha(t)=1$. 
For every subgroup scheme $H\subseteq G_{W(k)}$ 
stable under the action of $\Gm$
the homomorphism $\alpha:H(\tilde S)_\mu\to H(S_0)$ is bijective.
Since $\tilde\tau:S\to\tilde S$ is bijective in non-negative degrees, 
for $H\subseteq P^-$ the homomorphism 
$\tilde\tau:H(S)_\mu\to H(\tilde S)_\mu$ is bijective,
which proves the first assertion.
The augmentation ideal $I\subseteq A$ is $\ZZ$-graded, 
and the subscheme $U^+\subseteq G$ is defined by the ideal generated
by $I_{\le 0}$.
Since $\tilde\tau$ is bijective in non-negative degrees it follows that
an element $g\in G(S)_\mu$ lies in $U^+(S)_\mu$ iff $\tilde\tau(g)$ lies
in $U^+(\tilde S)_\mu$.
\end{proof}

\begin{Prop}
\label{Pr:decomp}
Let $(G,\mu)$ be a display datum 
and let $\u S$ be a frame over $W(k)$. 
Then the multiplication map 
$P^-(S)_\mu\times U^+(S)_\mu\to G(S)_\mu$ is bijective.
\end{Prop}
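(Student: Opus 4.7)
The plan is to deduce the decomposition from the open immersion $U^- \times L \times U^+ \hookrightarrow G_{W(k)}$ recalled just before the proposition. Since the multiplication $U^- \times L \to P^-$ is already an isomorphism of schemes, restricting gives an open immersion $P^- \times U^+ \hookrightarrow G_{W(k)}$ with image a $\Gm$-stable open subscheme $\Omega \subseteq G_{W(k)}$. Taking $\Gm$-equivariant $S$-valued points commutes with open immersions, so the map
\[
P^-(S)_\mu \times U^+(S)_\mu \xrightarrow\sim \Omega(S)_\mu \hookrightarrow G(S)_\mu
\]
is automatically injective. The proposition therefore reduces to the claim that every $g \in G(S)_\mu$, viewed as a $\Gm$-equivariant morphism $\Spec S \to G_{W(k)}$, actually factors through $\Omega$.

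To establish this factorization, I would pull back the (graded) ideal $I_Z \subseteq A$ of the closed complement $Z = G_{W(k)} \setminus \Omega$ and show that the ideal $J \subseteq S$ it generates is the unit ideal. Because $Z$ is $\Gm$-stable and $g$ is $\Gm$-equivariant, $J$ is a graded ideal of $S$. Composing with $\rho:S \to R$ from \eqref{Eq:rho} yields a reduction $\bar g:\Spec R \to G_{W(k)}$ corresponding to a graded ring homomorphism $A \to R$ whose target is concentrated in degree $0$; such a map annihilates $A_{>0}$ and $A_{<0}$ and hence factors through $A/(A_{>0}+A_{<0}) = \bar A$, i.e.\ through $L \subseteq \Omega$. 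Consequently $J$ reduces to the unit ideal in $R$, which is exactly the statement $J + \ker(\rho) = S$ inside $S$.

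Extracting the degree $0$ component now gives $J_0 + tS_1 = S_0$. By the frame axioms $tS_1 \subseteq \Rad(S_0)$ (this is precisely the ingredient used in the proof of Lemma \ref{Le:NAK}), hence $J_0 = S_0$, whence $1 \in J$ and $J = S$. This shows that $g$ factors through $\Omega$, which combined with the first paragraph gives the bijection asserted in the proposition.

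The main subtlety is the graded refinement of Nakayama in the last step: it is essential that $J$ is graded, because $\ker(\rho)$ already contains every $S_n$ with $n \ne 0$ and is therefore not contained in any Jacobson-like ideal of $S$; only after restricting to the degree $0$ slice does the radical hypothesis of Definition \ref{Def:frame} take effect. Once the $\Gm$-equivariance has been used to ensure every ideal in sight is graded, the rest of the argument is formal.
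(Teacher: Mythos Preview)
Your argument is correct. It differs from the paper's in a meaningful way, so a brief comparison is worthwhile.

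Both proofs obtain injectivity immediately from the open immersion $P^-\times U^+\hookrightarrow G_{W(k)}$, and both show surjectivity by first reducing to $R$ and observing that the image lands in the big cell. The paper composes $g$ with $\tau:S\to S_0$ and then $\pi:S_0\to R$, lands in $P^-(R)$ (only $S_{>0}$ is killed), lifts this to $\tau(g)\in P^-(S_0)\cdot U^+(S_0)$ using that $\pi$ is bijective on maximal spectra, and then invokes Lemma~\ref{Le:tau-GSmu-GS0} to pull the decomposition back through $\tau$ to $G(S)_\mu$. You instead compose with $\rho:S\to R$, which kills all of $S_{\ne 0}$ and therefore lands in $L(R)$; you then show directly, via the graded Nakayama argument on the ideal of the complement, that $g$ itself factors through the open cell $\Omega\subseteq G_{W(k)}$, after which the decomposition $\Omega(S)_\mu\cong P^-(S)_\mu\times U^+(S)_\mu$ is automatic from the $\Gm$-equivariance of the isomorphism $P^-\times U^+\xrightarrow{\sim}\Omega$.

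Your route is more self-contained for this proposition: it bypasses Lemma~\ref{Le:tau-GSmu-GS0} entirely and works purely scheme-theoretically with the graded ideal of $Z=G_{W(k)}\setminus\Omega$. The paper's route, on the other hand, passes through $\tau$ and $S_0$, which is natural in context since Lemma~\ref{Le:tau-GSmu-GS0} is developed anyway for the deformation theory (Lemma~\ref{Le:Hmu}, Remark~\ref{Rk:decomp-minuscule}). One small point worth making explicit in your write-up: the ideal $I_Z$ is graded because $Z$ is $\Gm$-stable and a $\Gm$-stable ideal in a $\ZZ$-graded ring is automatically graded; this is what justifies the passage to $J_0$.
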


\begin{proof}
Since $P^-\times U^+\to G_{W(k)}$ is an open immersion, 
the map of the lemma is injective. To prove surjectivity
let $\pi:S_0\to R=S_0/tS_1$ be the projection and consider the
group homomorphisms
\[
G(S)\xrightarrow\tau G(S_0)\xrightarrow{\pi}G(R).
\]
Here $\pi\circ\tau$ maps $G(S)_\mu$ into $P^-(R)$.
Indeed, for $g\in G(S)_\mu$, corresponding to a homomorphism of graded rings
$g^\sharp:A\to S$, the composition $\pi\circ\tau\circ g^\sharp$ 
maps $A_{>0}$ to zero because $\pi\circ\tau$ annihilates $S_{>0}$,
and $P^-$ is defined by the ideal $(A_{>0})$ of $A$.
Since $P^-\times U^+$ is an open subscheme of $G_{W(k)}$ 
and $\pi$ induces a bijective map $\Max(R)\to\Max(S_0)$,
it follows that $\tau$ maps
$G(S)_\mu$ into $P^-(S_0)\times U^+(S_0)$.
Now, using the first part of Lemma \ref{Le:tau-GSmu-GS0}
for $H=P^-$, 
every $g\in G(S)_\mu$ can be written as $g=qh$ with $q\in P^-(S)_\mu$ 
and $h\in G(S)_\mu$ such that $\tau(h)\in U^+(S_0)$,
which implies that $h\in U^+(S)_\mu$ 
by the second part of Lemma \ref{Le:tau-GSmu-GS0}.
\end{proof}

\begin{Example}
[$G$-displays over $\uW 1(R)$ and $G$-zips] 
\label{Ex:G-zip}
Let $R$ be a $k$-algebra and $\u S=\uW 1(R)$; 
see Examples \ref{Ex:frame-W1} \& \ref{Ex:disp-W1-zip}.
In this case the decomposition $S=R[t]\times_{\Frob,R}R[u]$
of Example \ref{Ex:frame-W1} induces a decomposition
\[
G(S)_\mu\cong G(R[t])_\mu\times_{\Frob,G(R_\sigma)_\mu}G(R_\sigma[u])_\mu
=P^-(R)\times_{\Frob,L^\sigma(R)}(P^+)^\sigma(R),
\]
where $R_\sigma$ is the ring $R$ with $k$-algebra strucure via 
the Frobenius $\sigma:k\to k$,
moreover $R[t]$ and $R_\sigma[u]$ and $R_\sigma$ are considered as graded rings
with $R$ in degree zero and $-\deg(t)=1=\deg(u)$.
This decomposition can also be viewed 
as a consequence of Proposition \ref{Pr:decomp}.
It follows that a $G(S)_\mu$-torsor is the same as a triple
$(X^-,X^+,\alpha)$ where $X^-$ is a $P^-$-torsor,
$X^+$ is a $(P^+)^\sigma$-torsor, 
and $\alpha:(X^-/U^-)^\sigma\cong X^+/U^+$ is $L$-equivariant.
It follows that $G$-displays over $\uW 1(R)$ of type $\mu$
are $G$-zips over $R$ of type $\mu^{-1}$ in the sense of 
\cite{Pink-Wedhorn-Ziegler:F-zips}.
In fact, here we reproduce only the case $q=p$ of loc.cit.
For the general case one needs the $\OOO$-frame $\uW {\OOO,1}$
where $\OOO$ is an extension of $\ZZ_p$ with residue field $\FF_q$;
see Remark \ref{Rk:frames-WO}.
\end{Example}

\subsection{$1$-bounded and minuscule display data}


\begin{Defn}
\label{Def:minuscule}
Let 
\begin{equation}
\label{Eq:Lie-GWk}
\Lie(G_{W(k)})=\bigoplus_{n\in\ZZ}\Fg_n
\end{equation}
be the weight decomposition with respect to 
the action of $\Gm$ by $\Ad(\mu^{-1})$.
The display datum $(G,\mu)$ 
is called $1$-bounded if $\Fg_n$ is zero for $n\ge 2$.
\end{Defn}

We note that 
$(G,\mu)$ is $1$-bounded iff $\Lie U^+=\Fg_1$.
A display datum $(G,\mu)$ with reductive $G$
is $1$-bounded iff $\Fg=\Fg_{-1}\oplus\Fg_0\oplus\Fg_1$,
i.e.\ $\mu$ is minuscule.
In this case we call $(G,\mu)$ a minuscule display datum.

\begin{Lemma}
\label{Le:log-minuscule}
If $(G,\mu)$ is a $1$-bounded display datum,
there is a unique $\Gm$-equivariant morphism of schemes
$\log^+:U^+\to V(\Fg_1)$ which induces the identity on the Lie algebras,
and\/ $\log^+$ is an isomorphism of group schemes.
\end{Lemma}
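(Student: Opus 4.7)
The plan is to specialize the proof of Lemma \ref{Le:log-U} to the $1$-bounded setting, and then add a short rigidity/bookkeeping step that uses the fact that $I/I^2$ is concentrated in a single positive degree.

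First I would note that the $1$-bounded assumption translates into $\Lie U^+ = \Fg_1$, hence $V(\Lie U^+) = V(\Fg_1)$. Writing $U^+ = \Spec B$ as in the proof of Lemma \ref{Le:log-U}, the graded free $W(k)$-module $I/I^2$ is naturally identified with $\Fg_1$ as a graded module, so it is concentrated in degree $1$. Existence of $\log^+$ is then furnished directly by Lemma \ref{Le:log-U}.

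For uniqueness, a $\Gm$-equivariant morphism $\log:U^+\to V(\Fg_1)$ inducing the identity on Lie algebras corresponds, as in the proof of Lemma \ref{Le:log-U}, to a graded $W(k)$-linear section $s:I/I^2\to I$ of the projection $I\to I/I^2$. Since $B$ sits in non-negative degrees and $I^2\subseteq B_{\ge 2}$, the map $I_1\to (I/I^2)_1$ is a bijection, and the graded section is forced to be its inverse. Hence $\log^+$ is unique. By Lemma \ref{Le:log-U}, this (or any) such $\log^+$ is an isomorphism of schemes, so only the group law remains.

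To see that $\log^+$ is a group homomorphism, I would argue at the level of the graded ring isomorphism $\lambda:\Sym_{W(k)}^*(\Fg_1)\xrightarrow{\sim}B$ produced above, in which $\Fg_1$ sits in degree $1$; in particular $B$ is generated as a $W(k)$-algebra by $B_1=\Fg_1$. The comultiplication $\Delta:B\to B\otimes_{W(k)}B$ is $\Gm$-equivariant, hence graded. Since $B$ is non-negatively graded and $B_0=W(k)$, one has $(B\otimes_{W(k)}B)_1 = B_1\otimes 1+1\otimes B_1$, and the counit identities then force $\Delta(x)=x\otimes 1+1\otimes x$ for every $x\in B_1$. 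This is precisely the comultiplication of the vector group $V(\Fg_1)$ on its algebra generators, so $\lambda$ is a Hopf algebra isomorphism and $\log^+$ is an isomorphism of group schemes.

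The only real content beyond Lemma \ref{Le:log-U} is the verification that $\Delta$ is primitive on $B_1$, which is the place where the $1$-bounded hypothesis is used in an essential way (through the concentration of $I/I^2$ in a single positive degree); the rest is direct bookkeeping.
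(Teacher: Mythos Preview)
Your argument is correct and follows the paper's approach: existence via Lemma~\ref{Le:log-U}, uniqueness from the forced degree-$1$ section, and the group-homomorphism step---which the paper abbreviates as ``a similar argument''---you spell out explicitly via primitivity of $B_1$ under the comultiplication. One notational slip: $I/I^2$ is the cotangent module $\Fg_1^*$ rather than $\Fg_1$, but this does not affect the reasoning since only its concentration in degree $1$ is used.
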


\begin{proof}
We have to show that the morphism $\log^+$ of Lemma \ref{Le:log-U} 
is unique and a group homomorphism.
Let $U^+=\Spec B$ and $I=B_{>0}$.
Then $\log^+$ corresponds to a homomorphism of graded
$W(k)$-modules $s:I/I^2\to I$ 
which is a section of $\pi:I\to I/I^2$.
But $\pi$ is bijective in degree $1$,
and $I/I^2$ is concentrated in degree $1$ since $\mu$ is $1$-bounded,
hence $s$ is unique. 
A similar argument shows that $\log^+$ is a group homomorphism.
%
\end{proof}

\begin{Remark}
\label{Rk:decomp-minuscule}
Let $(G,\mu)$ be a $1$-bounded display datum 
and let $\u S$ be a frame over $W(k)$ for $R$.
Then the decomposition of Proposition \ref{Pr:decomp} takes the form
\[
G(S)_\mu\cong P^-(S)_\mu\times U^+(S)_\mu\cong 
P^-(S_0)\times(\Fg_1\otimes_{W(k)} S_1).
\]
If we write $U^+(tS_1)=\Ker(U^+(S_0)\to U^+(R))$,
it follows that the image of $\tau:G(S)_\mu\to G(S_0)$
is equal to
\[
G(S_0)\times_{G(R)}P^{-}(R)
\cong P^-(S_0)\times U^+(tS_1)\cong
P^-(S_0)\times(\Fg_1\otimes_{W(k)} tS_1).
\]
Let $J_n$ denote the kernel of $t^n:S_n\to S_0$. 
We get an exact sequence
\[
0\to\Fg_1\otimes_{W(k)} J_1\to G(S)_\mu\xrightarrow{\;\tau\;}
G(S_0)\times_{G(R)}P^{-}(R)\to 0.
\]
We have $J_1=0$
for the frames $\u W(R)$, $\u W(B/A)$, $\u\WW(R)$, and $\u\WW(B/A)$,
but not for the truncated Witt frame $\uW m(R)$.
\end{Remark}

\begin{Remark}
\label{Rk:BP}
For a reductive minuscule display datum,
$G$-displays of type $\mu$ over $\u W(R)$ 
coincide with the $(G,\mu^{-1})$-displays over $R$ of \cite{Bueltel-Pappas}. 
Indeed, the display group $G(\u W(R))_\mu$
coincides with the group $H^{\mu^{-1}}(R)$ of loc.cit.\
by Remark \ref{Rk:decomp-minuscule},
and the homomorphism $\sigma:G(\u W(R))_\mu\to G(W(R))$ coincides with
the homomorphism $\Phi_{G,\mu^{-1}}$ of \cite[Proposition 3.1.2]{Bueltel-Pappas}
by Lemma \ref{Le:action-tf}.
We note that the existence of this homomorphism requires some work in loc.cit., while here it is simply induced by the ring homomorphism $\sigma$.
\end{Remark}

\begin{Example}
\label{Ex:orth-minuscule}
Let $G=\Ogr(\psi)=\{g\in\GL_n\mid g^tJg=J\}$ as in \S \ref{Se:orth-disp}.
Let $\mu=(1,0,\ldots,0,-1)\in\ZZ^d$ and let $\mu:\Gm\to G$
be the associated cocharacter,
$\mu(x)=\diag(x^{\mu_1},\ldots,x^{\mu_n})$.
Then $\mu$ is minuscule.
Explicitly, $L$ is the group of block matrices
\[
\left(
\begin{matrix}
a & 0 & 0 \\
0 & B & 0 \\
0 & 0 & a^{-1}
\end{matrix}
\right)
\]
with $a\in\Gm$ and $B\in\Ogr(\psi')$, where $\psi'$ is the analogue
of $\psi$ in dimension $n-2$. 
The group $U^+$ is the group of block matrices
\[
g=
\left(
\begin{matrix}
1 & 0 & 0 \\
x & E_{n-2} & 0 \\
z & y & 1
\end{matrix}
\right)
\]
with $x+y^t=0$ and $2z=yx$; 
here $x$ and $y$ are a column and a row vector of length $n-2$.
The isomorphism $\log^+:U^+\xrightarrow\sim\GG_a^{n-2}$ is given by $g\mapsto x$. 
\end{Example}

\section{Deformations}

The classical deformation theory of $1$-displays over the small Witt vector ring
can be formulated as follows. 
For a PD thickening $B\to A$ of admissible Artin rings 
(Example \ref{Ex:frame-WW}) 
we have
frame homomorphisms 
\[
\u\WW(B)\xrightarrow{\;\alpha\;}\u\WW(B/A)\xrightarrow{\;\varepsilon\;}\u\WW(A)
\]
given by functoriality.
Then $\varepsilon$ induces an equivalence of $1$-displays,
i.e.\ every $\u\WW(A)$-display lifts uniquely to $\u\WW(B/A)$,
and lifts of $1$-displays under $\alpha$ correspond to lifts of
the Hodge filtration.
This generalizes as follows.
Lifts of (higher) displays and of $G$-displays under $\alpha$
again correspond to lifts of a Hodge filtration,
but $\varepsilon$ induces an equivalence only for $1$-bounded $G$-displays.
We will verify these facts in an abstract setting.

\subsection{A unique lifting lemma for $1$-bounded displays}

Let 
$\varepsilon:\u{\tilde S}\to\u S$
be a homomorphism of frames for the same ring $R$, 
i.e.\ the induced ring homomorphism 
$\tilde S_0/t\tilde S_1\to S_0/tS_1$ is bijective,
and let $K\subseteq\tilde S$
be the kernel of $\varepsilon$. 
Then $K=\bigoplus_nK_n$ with $K_n=K\cap \tilde S_n$.
We consider the commutative diagram with exact rows
\[
\xymatrix@M+0.2em{
0 \ar[r] &
K_1 \ar[r] \ar[d]^{\sigma_1} &
\tilde S_1 \ar[r]^{\varepsilon_1} \ar[d]^{\sigma_1} &
S_1 \ar[d]^{\sigma_1} \\
0 \ar[r] &
K_0 \ar[r] &
\tilde S_0 \ar[r]^{\varepsilon_0} &
S_0 
}
\]
and its variant with $\tau_1$ in place of $\sigma_1$;
here $\tau_1$ is the multiplication by $t$.

\begin{Defn}
\label{Def:sigma-nilpotent-ext}
A frame homomorphism $\varepsilon$ as above
is called a $1$-thickening if $\varepsilon_n:\tilde S_n\to S_n$ 
is surjective for $n=0,1$
and $\tau_1:K_1\to K_0$ is bijective.
In this case let
\[
\dot\sigma:=\sigma_1\circ\tau_1^{-1}:K_0\to K_0.
\]
A $1$-thickening of frames $\varepsilon$ is called 
a $\dot\sigma$-nilpotent thickening if $\dot\sigma$
is pointwise nilpotent on $K_0/pK_0$ and $K_0$ is $p$-adically complete.
If $\u S$ and $\u{\tilde S}$ are $p$-adic frames,
$\varepsilon$ is called a stable $1$-thickening 
(stable $\dot\sigma$-nilpotent thickening) 
if for each etale homomorphism $R\to R'$ the associated base change of
$\varepsilon$ as in \S \ref{Se:etale-loc} is a $1$-thickening 
($\dot\sigma$-nilpotent thickening).
\end{Defn}

\begin{Remark}
The relation $\sigma(t)=p$ implies that $p\dot\sigma=\sigma_0$ on $K_0$.
\end{Remark}


\begin{Example}
\label{Ex:WBA-sigma-nil-thickening}
For a PD thickening of $p$-adic rings $B\to A$ with kernel $J$ the
natural frame homomorphism $\u W(B/A)\to\u W(A)$ is a $1$-thickening.
In this case $K_n=W(J)$ for $n\ge 0$, 
the homomorphism $\tau_1:K_{1}\to K_0$ is the identity of $W(J)$, 
and under the isomorphism $\log:W(J)\cong J^\NN$ 
the endomorphism $\dot\sigma$ of $W(J)$ acts on $J^\NN$ by the shift
$(a_0,a_1,\ldots)\mapsto(a_1,a_2,\ldots)$.
If $B\to A$ is a PD thickening of admissible local Artin rings 
(Example \ref{Ex:frame-WW}) with nilpotent divided powers,
then $\u\WW(B/A)\to\u\WW(A)$ is a stable $\dot\sigma$-nilpotent thickening because
the shift is pointwise nilpotent on $J^{(\NN)}$, 
and an etale base change of $\u\WW(B/A)$ is a frame of the same type 
by Lemma \ref {Le:frame-WW-bc}.
\end{Example}

If $(G,\mu)$ is a display datum 
and if $H\subseteq G_{W(k)}$ is a $\Gm$-invariant subgroup scheme, 
for $K=\ker(\varepsilon)$ as above we set
\[
H(K)_\mu=\Ker(H(\tilde S)_\mu\to H(S)_\mu), 
\]
\[
H(K_0)=\Ker(H(\tilde S_0)\to H(S_0)).
\]

\begin{Lemma}
\label{Le:Hmu}
Let $(G,\mu)$ be a $1$-bounded display datum, 
let $\varepsilon:\u{\tilde S}\to\u S$ 
be a $1$-thickening of frames over $W(k)$,
and let $H$ be one of the group schemes $G_{W(k)},P^\pm,U^\pm,L$.
Then:
\begin{enumerate}
\item[(a)]
\label{It:HKmu-HK0}
$\tau:\tilde S\to\tilde S_0$ induces
an isomorphism $H(K)_\mu\cong H(K_0)$.
\item[(b)]
\label{It:HtS-HS}
The homomorphisms $H(\tilde S)_\mu\to H(S)_\mu$ and $H(\tilde S_0)\to H(S_0)$ 
induced by $\varepsilon$ are surjective.
\end{enumerate}
\end{Lemma}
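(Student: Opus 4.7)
My plan is to reduce both parts case-by-case on $H$, using three structural tools: (i) Lemma \ref{Le:tau-GSmu-GS0}, which makes $\tau: H(S)_\mu \xrightarrow{\sim} H(S_0)$ bijective whenever $H \subseteq P^-$; (ii) Proposition \ref{Pr:decomp}, giving a bijection $G(S)_\mu \cong P^-(S)_\mu \times U^+(S)_\mu$; and (iii) Lemma \ref{Le:log-minuscule}, which supplies the group isomorphism $U^+ \cong V(\Fg_1)$ of $\Gm$-equivariant schemes (using $1$-boundedness). The same tools apply to $\u{\tilde S}$ and $\u S$, and comparing them via $\varepsilon$ yields the lemma.

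The case $H = U^+$ I handle directly: via $\log^+$ one identifies $U^+(\tilde S)_\mu = \Fg_1 \otimes_{W(k)} \tilde S_1$, $U^+(\tilde S_0) = \Fg_1 \otimes_{W(k)} \tilde S_0$, and likewise for $\u S$. Then (a) follows because $\tau_1: K_1 \to K_0$ is bijective by the $1$-thickening hypothesis and $\Fg_1$ is $W(k)$-projective, and (b) follows from surjectivity of $\varepsilon_0, \varepsilon_1$. For $H \subseteq P^-$, i.e.\ $H \in \{P^-, U^-, L\}$, Lemma \ref{Le:tau-GSmu-GS0} yields the commutative diagram
\[
\begin{array}{ccc} H(\tilde S)_\mu & \xrightarrow{\sim} & H(\tilde S_0) \\ \downarrow & & \downarrow \\ H(S)_\mu & \xrightarrow{\sim} & H(S_0) \end{array}
\]
with bijective rows, so (a) follows by taking vertical kernels and (b) reduces to surjectivity of $H(\tilde S_0) \to H(S_0)$. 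For $H = P^+ = L \ltimes U^+$ both parts assemble from the $L$ and $U^+$ cases. For $H = G_{W(k)}$, Proposition \ref{Pr:decomp} gives $G(\tilde S)_\mu \cong P^-(\tilde S)_\mu \times U^+(\tilde S)_\mu$ (and similarly for $\u S$), yielding (a) once I also verify $G(K_0) = P^-(K_0) \times U^+(K_0)$; this last identification uses that $K_0 \subseteq \Rad(\tilde S_0)$ forces elements of $G(\tilde S_0)$ that reduce to the identity into the open subscheme $P^- \times U^+ \hookrightarrow G_{W(k)}$. Part (b) for $G_{W(k)}$ then follows from the $P^-$ and $U^+$ cases of (b) together with the decomposition.

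The main obstacle is the $L$ case of (b): proving $L(\tilde S_0) \to L(S_0)$ is surjective for smooth affine $L$ over $W(k)$ when only $K_0 \subseteq \Rad(\tilde S_0)$ is guaranteed. Since $K_0$ need not be nilpotent, the naive smoothness-along-nilpotent-ideal argument fails. I would attempt to exploit the relation $\sigma_0 = p\dot\sigma$ on $K_0$ together with $p \in \Rad(\tilde S_0)$ to realise $K_0$ as the union of subideals along which lifts of $L$-points can be built inductively using smoothness of $L$, reducing at each stage to an infinitesimal lift controlled by $\Lie L$. Once this surjectivity is secured, every remaining assertion in (b) is a formal consequence of the reductions in the second paragraph.
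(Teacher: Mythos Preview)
Your approach matches the paper's: the same decomposition via Proposition~\ref{Pr:decomp}, the same use of Lemma~\ref{Le:tau-GSmu-GS0} for $H\subseteq P^-$, the same identification $U^+\cong V(\Fg_1)$ from Lemma~\ref{Le:log-minuscule}, and the same observation that $K_0\subseteq\Rad(\tilde S_0)$ forces $G(K_0)=P^-(K_0)\times U^+(K_0)$. Part~(a) is complete as you wrote it.

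For the obstacle you flag in (b) you have the right ingredient but not the computation. Combine $\sigma_0=p\dot\sigma$ on $K_0$ with the Frobenius-lift property of $\sigma_0$: for $x\in K_0$ there is $y\in\tilde S_0$ with
\[
x^p=\sigma_0(x)+py=p(\dot\sigma(x)+y),
\]
hence $x^{p+1}=x\cdot x^p\in pK_0$. Thus $K_0/pK_0$ is a nil-ideal, while $p^iK_0/p^{i+1}K_0$ has square zero for $i\ge 1$. Since $H$ is smooth of finite type one lifts $H$-points step by step down the $p$-adic filtration $K_0\supseteq pK_0\supseteq p^2K_0\supseteq\cdots$ and passes to the limit; so the mechanism is a descending filtration, not the ascending union you suggest. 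This limit step requires $K_0$ to be $p$-adically complete, which is not part of the $1$-thickening axioms but is included in the $\dot\sigma$-nilpotent hypothesis used in Proposition~\ref{Pr:deformation-lemma}; the paper's proof invokes it implicitly here.
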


\begin{proof}
Since $K_0\subseteq\Rad(\tilde S_0)$, 
the multiplication $P^-(K_0)\times U^+(K_0)\to G(K_0)$ is bijective,
moreover $P^-(K)_\mu\times U^+(K)_\mu\to G(K)_\mu$ is bijective by
Proposition \ref{Pr:decomp}. 
Hence to prove (a) we can assume that $H\subseteq P^-$ or $H=U^+$.
For $H\subseteq P^-$ the assertion holds since $K_{\le 0}=K_0[t]$.
For $H=U^+$ we have $H(K_0)=\Lie(U^+)\otimes K_0$ 
and $H(K)_\mu=\Lie(U^+)\otimes K_1$ since $\mu$ is $1$-bounded,
and the assertion holds since $t:K_1\to K_0$ is bijective.
To prove (b) we note that $K_0/pK_0$ is a nil-ideal 
because for $x\in K_0$ there is a $y\in\tilde S_0$ 
with $x^p=\sigma_0(x)+py=p(\dot\sigma(x)+y)$ and thus $x^{p+1}\in pK_0$.
Clearly the ideal $p^iK_0/p^{i+1}K_0$ is nilpotent for $i\ge 1$.
Since $H$ is smooth and $K_0$ is $p$-adically complete
it follows that $H(\tilde S_0)\to H(S_0)$ is surjective.
To prove that $H(\tilde S)_\mu\to H(S)_\mu$ is surjective we
can assume again that $H\subseteq P^-$ or $H=U^+$.
In the first case we use Lemma \ref{Le:tau-GSmu-GS0},
in the second case we look at 
$\Lie(U^+)\otimes\tilde S_1\to\Lie(U^+)\otimes S_1$,
which is surjective.
\end{proof}

\begin{Prop}
[Unique lifting lemma]
\label{Pr:deformation-lemma}
Let $(G,\mu)$ be a $1$-bounded display datum. 
If\/ $\varepsilon:\u{\tilde S}\to\u S$ is a stable
$\dot\sigma$-nilpotent thickening of $p$-adic frames over $W(k)$
for $R$, 
the base change functor 
\begin{equation}
\label{Eq:def}
\varepsilon:G\GDisp_\mu(\u{\tilde S})\to G\GDisp_\mu(\u S)
\end{equation}
is an equivalence of categories.
\end{Prop}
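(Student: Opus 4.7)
The plan is to establish essential surjectivity and fully faithfulness of $\varepsilon$, working section-wise over étale neighborhoods; stability of $\varepsilon$ makes every construction compatible with étale base change, so sheafification extends the local equivalence to the stack. The key input is Lemma \ref{Le:Hmu}: with $K=\ker(\varepsilon:\tilde S\to S)$, the reduction maps $G(\tilde S)_\mu\to G(S)_\mu$ and $G(\tilde S_0)\to G(S_0)$ are surjective with kernels $G(K)_\mu$ and $G(K_0)$ respectively, and $\tau$ identifies $G(K)_\mu\cong G(K_0)$.

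Essential surjectivity follows from Remark \ref{Rk:G-disp}: trivializing a pair $(Q,\gamma)$ over $\u S$ on an étale cover represents it by $g\in G(S_0)$, which lifts to $\tilde g\in G(\tilde S_0)$ by Lemma \ref{Le:Hmu}(b); the transition cocycle is lifted and corrected by the fixed-point argument below. Fully faithfulness reduces to the unique-lifting problem: given $\tilde g_1,\tilde g_2\in G(\tilde S_0)$ and $h\in G(S)_\mu$ satisfying $\tau(h)^{-1}\varepsilon(\tilde g_1)\sigma(h)=\varepsilon(\tilde g_2)$, find a unique $\tilde h\in G(\tilde S)_\mu$ above $h$ with $\tau(\tilde h)^{-1}\tilde g_1\sigma(\tilde h)=\tilde g_2$. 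Picking any lift $\tilde h_0$, we obtain a defect $d\in G(K_0)$ via $\tau(\tilde h_0)^{-1}\tilde g_1\sigma(\tilde h_0)=\tilde g_2 d$. Replacing $\tilde h_0$ by $\tilde h_0 k$ for $k\in G(K)_\mu$ and setting $k':=\tau(k)\in G(K_0)$ by Lemma \ref{Le:Hmu}(a) transforms the desired relation into the fixed-point equation
\[
k'=c'\cdot\Phi(k')\qquad\text{in }G(K_0),
\]
where $c'=\tilde g_2 d\tilde g_2^{-1}$ and $\Phi:G(K_0)\to G(K_0)$ is the transport via $\tau^{-1}$ of $\Ad(\tilde g_2)\circ\sigma$ on $G(K)_\mu$.

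The solvability and uniqueness hinge on $\Phi$ being pointwise contractive modulo $p$. By Proposition \ref{Pr:decomp} and $K_0\subseteq\Rad(\tilde S_0)$, both $G(K)_\mu$ and $G(K_0)$ split into $P^-$- and $U^+$-parts, and $\Phi$ respects the splitting. On the $P^-$-part, $\Phi$ involves $\sigma_0|_{K_0}=p\dot\sigma$ (from $\sigma(t)=p$), hence vanishes modulo $p$. On the $U^+$-part, the 1-bounded hypothesis together with Lemma \ref{Le:log-minuscule} identifies $U^+(K)_\mu\cong\Fg_1\otimes K_1$ and $U^+(K_0)\cong\Fg_1\otimes K_0$, under which $\Phi$ acts as $\id_{\Fg_1}\otimes\dot\sigma$, pointwise nilpotent on $\Fg_1\otimes K_0/pK_0$ by hypothesis. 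Filtering $G(K_0)=\varprojlim_n G(K_0/p^nK_0)$ by $p$-adic completeness, and within each quotient by powers of the nil-ideal $K_0/pK_0\subseteq\tilde S_0/p$, the graded pieces are vector groups on which the equation linearizes to $(I-T)v=w$ with $T$ pointwise nilpotent, uniquely solvable by $v=\sum_{n\ge 0}T^nw$. Patching yields the unique $k'\in G(K_0)$; the same argument applied to the quotient of two solutions proves uniqueness.

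The main obstacle is implementing the iteration rigorously in the non-abelian group $G(K_0)$: since $\Phi$ is not a group homomorphism, no single infinite-product formula applies, and the filtration argument must carefully track the semidirect structure from Proposition \ref{Pr:decomp} through each step. Once this is established, stability of $\varepsilon$ extends the single-section equivalence to the full equivalence of étale stacks via descent.
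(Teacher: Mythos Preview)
Your overall reduction to a fixed-point equation $k'=c'\Phi(k')$ in $G(K_0)$, with $\Phi=\Ad(\tilde g_2)\circ\sigma\circ\tau^{-1}$, is exactly the paper's strategy (the paper calls this operator $\UUU_g$). But two of your claims about $\Phi$ are incorrect, and this is where the real work lies.

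First, $\Phi$ does \emph{not} respect the $P^-\times U^+$ splitting: the component $\sigma\circ\tau^{-1}$ does (both $P^-$ and $U^+$ are $\Gm$-stable), but the conjugation $\Ad(\tilde g_2)$ by an arbitrary element of $G(\tilde S_0)$ scrambles the factors. Consequently your analysis of $\Phi$ on the $U^+$-part is wrong: you say it acts as $\id_{\Fg_1}\otimes\dot\sigma$, but this is only $\sigma\circ\tau^{-1}$ --- you have dropped the $\Ad(\tilde g_2)$. The paper's remedy is to observe that, modulo $p$, $\sigma\circ\tau^{-1}$ annihilates $P^-(\bar K_m)$ (since $\sigma_0=p\dot\sigma$ on $K_0$), so $\Phi$ on $G(\bar K_m)$ factors as
\[
G(\bar K_m)\xrightarrow{pr}U^+(\bar K_m)\xrightarrow{\id\otimes\dot\sigma}U^+(\bar K_m)\xrightarrow{\Ad(g)}G(\bar K_m).
\]
Nilpotence of this composite is equivalent to nilpotence of its cyclic permutation, an endomorphism of $U^+(\bar K_m)$. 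The component $y\mapsto pr(gyg^{-1})$ is a power series, but its higher-order terms are killed by $\id\otimes\dot\sigma$ because $\dot\sigma(ab)=p\dot\sigma(a)\dot\sigma(b)$ vanishes in $\bar K_m$; what survives is $\psi\otimes\dot\sigma$ for a $\sigma_0$-linear $\psi$, and pointwise nilpotence of $\dot\sigma$ finishes it. You have not supplied this cyclic-permutation and linearization step, and without it the nilpotence of $\Phi$ is unproved.

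Second, contrary to what you say, $\Phi$ \emph{is} a group homomorphism: $\tau^{-1}$, $\sigma$, and $\Ad(\tilde g_2)$ are each homomorphisms. Hence once pointwise nilpotence modulo each $p^m$ is established, the infinite product $y=\cdots\Phi^2(c')\Phi(c')c'$ converges and solves $\Phi(y)^{-1}y=(c')^{-1}$ directly; no filtration-by-nil-ideal acrobatics is needed, and your concern about a ``non-abelian obstacle'' dissolves.
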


\begin{proof}
By definition, 
\eqref{Eq:def} is the functor of etale quotient groupoids
\[
[G(\tilde\bS_0)/G(\tilde\bS)_\mu]\to[G(\bS_0)/G(\bS)_\mu]
\]
with respect to the action \eqref{Eq:action}.
We will show that even the functor 
of presheaf quotient groupoids is an equivalence, 
i.e.\ that the functor 
\begin{equation}
\label{Eq:groupoids}
[G(\tilde S_0)/G(\tilde S)_\mu]\to[G(S_0)/G(S)_\mu]
\end{equation}
is an equivalence. 
Since the maps $G(\tilde S_0)\to G(S_0)$ and $G(\tilde S)_\mu\to G(S)_\mu$
are surjective by Lemma \ref{Le:Hmu} (b), we have to show that 
the action of $G(K)_\mu$ on the fibres of $G(\tilde S_0)\to G(S_0)$ 
is simply transitive,
i.e.\ that for $g\in G(\tilde S_0)$ and $h\in G(K_0)$
there is a unique $z\in G(K)_\mu$ such that
\begin{equation}
\label{Eq:Cond}
\tau(z)^{-1}g\sigma(z)=hg.
\end{equation}
Since $\tau:G(K)_\mu\to G(K_0)$ is bijective by Lemma \ref{Le:Hmu} (a),
we can define a group homomorphism
\begin{equation}
\label{Eq:UUUg}
\UUU_g:G(K_0)\to G(K_0),\qquad y\mapsto g(\sigma(\tau^{-1}(y)))g^{-1},
\end{equation}
and \eqref{Eq:Cond} is equivalent to $y^{-1}\UUU_g(y)=h$ for $y=\tau(z)$.
Thus 
we have to show that for each $g\in G(\tilde S_0)$ the map
\begin{equation}
\label{Eq:bijective}
G(K_0)\to G(K_0),\qquad y\mapsto \UUU_g(y)^{-1}y
\end{equation}
is bijective.
Since $K_0$ is $p$-adically complete and $G$ is smooth,
$G(K_0)$ is the limit over $m$ of $G(K_0/p^mK_0)$,
and the reduction map $G(K)\to G(K_0/p^mK_0)$ is surjective;
let $G(p^mK_0)$ denote its kernel.

\begin{Lemma}
\label{Le:UUUg}
The endomorphism $\UUU_g$ 
of \eqref{Eq:UUUg} preserves $G(p^mK_0)$
and acts on $G(K_0/p^mK_0)$ as a pointwise nilpotent operator.
\end{Lemma}

It follows that \eqref{Eq:bijective} is surjective
because for $h\in G(K_0)$
the converging product $y=\cdots\UUU_g^2(h)\UUU_g(h)h$ 
satisfies $\UUU_g(y)^{-1}y=h$, 
and \eqref{Eq:bijective} is injective since 
$\UUU_g(y)^{-1}y=\UUU_g(y')^{-1}y'$ implies that
$y'y^{-1}$ is fixed by $\UUU_g$ and thus equal to $1$.
This proves Proposition \ref{Pr:deformation-lemma} 
modulo Lemma \ref{Le:UUUg}.
\end{proof}

\begin{proof}[Proof of Lemma \ref{Le:UUUg}]
The endomorphism $\sigma\circ\tau^{-1}$ 
preserves the decomposition $G(K_0)=P^-(K_0)\times U^+(K_0)$ 
and acts on $U^+(K_0)=(\Lie U^+)\otimes K_0$ by ${\id}\otimes\dot\sigma$.
Let $P^-=\Spec A^-$ as in \eqref{Eq:Ppm} 
and let $\nu:A^-\to A^-$ be the ring homomorphism 
given by $\nu(a)=p^ia$ when $\deg(a)=-i$.
Then $\sigma\circ\tau^{-1}$ acts on $P^-(\tilde S_0)$
by sending $x:A^-\to\tilde S_0$ to $\sigma_0\circ x\circ\nu$.
It follows that $\sigma\circ\tau^{-1}$ and thus $\UUU_g$ 
preserve $G(p^mK_0)$, and it suffices to verify that $\UUU_g$ 
is pointwise nilpotent on $G(\bar K_m)$ 
with $\bar K_m=p^mK_0/p^{m+1}K_0$ for $m\ge 0$. 
Now $\sigma\circ\tau^{-1}$ is zero on $P^-(\bar K_m)$ 
because $\sigma_0(a)=p\dot\sigma(a)$ for $a\in K_0$.
Hence the endomorphism $\UUU_g$ of $G(\bar K_m)$ factors as
\begin{equation}
G(\bar K_m)
\xrightarrow{\;pr\;}U^+(\bar K_m)
\xrightarrow{\;{\id}\otimes\dot\sigma\;}U^+(\bar K_m)\xrightarrow{y\mapsto gyg^{-1}}
G(\bar K_m),
\end{equation}
which is pointwise nilpotent iff the cyclic permutation of these maps
\begin{equation}
\label{Eq:End-of-U+Lm}
U^+(\bar K_m)\xrightarrow{y\mapsto gyg^{-1}}G(\bar K_m)
\xrightarrow{\;pr\;}U^+(\bar K_m)
\xrightarrow{\;\id\otimes\dot\sigma\;}U^+(\bar K_m)
\end{equation}
is pointwise nilpotent. 
Here $y\mapsto pr(gyg^{-1})$ is an endomorphism of the pointed set
$U^+(\bar K_m)$ which is given by some power series over $\tilde S_0$. 
The terms of higher order are annihilated by ${\id}\otimes\dot\sigma$
because $\dot\sigma(ab)=p\dot\sigma(a)\dot\sigma(p)$ for $a,b\in K_0$.
Hence the endomorphism \eqref{Eq:End-of-U+Lm} of
$U^+(\bar K_m)=(\Lie U^+)_{\tilde S_0}\otimes_{\tilde S_0}\bar K_m$
takes the form 
$\psi\otimes\dot\sigma$ for a $\sigma_0$-linear endomorphism $\psi$.
Since $\dot\sigma$ is pointwise nilpotent on $\bar K_m$
it follows that \eqref{Eq:End-of-U+Lm} is pointwise nilpotent.
\end{proof}

\begin{Remark}
One verifies easily that
the $\sigma_0$-linear endomorphism $\psi$ of $(\Lie U^+)_{\tilde S_0}$
in the end of the proof 
is given by $(1\otimes\sigma_0)\circ pr\circ\Ad(g)$.
\end{Remark}

\begin{Remark}
The essence of the proof of Proposition \ref{Pr:deformation-lemma} is 
very similar to the proof of \cite[Theorem 3.5.4]{Bueltel-Pappas}, 
which is the key to the deformation theory of minuscule $G$-displays 
over Witt vectors in \cite{Bueltel-Pappas}. 
In that context, a nilpotence condition is needed because for a
PD thickening of $B\to A$ of $W(k)$-algebras in which $p$ is nilpotent 
the frame homomorphism $\u W(B/A)\to\u W(A)$ is a $1$-thickening
which is not in general $\dot\sigma$-nilpotent,
and therefore the functor $G\GDisp_\mu(\u W(B/A))\to G\GDisp_\mu(\u W(A))$
is not an equivalence. 
But the restriction of this functor to $G$-displays
which are adjoint nilpotent in the sense of \cite{Bueltel-Pappas} 
is an equivalence.
Indeed, a $G$-display is adjoint nilpotent in the sense of loc.cit.\ iff the
above endomorphism $\psi$ is nilpotent; see \cite[Lemma 3.4.4]{Bueltel-Pappas};
in that case the endomorphism $\UUU_g$ is nilpotent.
\end{Remark}

\subsection{The Hodge filtration of a display}
\label{Se:Hodge-fil-disp}

Let $\u S$ be a frame for $R$ 
and let $\bar\tau:S\to R$ be the composition of
$\tau:S\to S_0$ and the projection $S_0\to R$.
For a finite projective graded $S$-module $M$ we define
\[
M_R=M^\tau\otimes_{S_0}R=M\otimes_{S,\bar\tau}R.
\]
The modules 
\[
E_n=\Image(M_n\to M^\tau\to M_R)
\]
form a descending filtration $(E_n)_{n\in\ZZ}$
by locally direct summands of $M_R$,
with $E_n=M_R$ for small $n$ and $E_n=0$ for large $n$.
If $M=L\otimes_{S_0}S$ for a finite projective graded $S_0$-module $L$, 
then $E_n=\bigoplus_{i\ge n}L_i\otimes_{S_0}R$.
We call $(E_n)$ the Hodge filtration of $M_R$.
If $(M,\beta)$ is an orthogonal module over $\u S$ 
(Definition \ref{Def:orth-disp})
then $\beta$ induces a perfect symmetric bilinear form on $M_R$,
and the Hodge filtration is self-dual in the sense that $(E_n)^\perp=E_{1-n}$.

\subsection{Lifts of the Hodge filtration}
\label{Se:lifts-Hodge}

Let $\alpha:\u S'\to\u S$ be a homomorphism of frames such that 
$\alpha_0:S'_0\to S_0$ is bijective. 
We identify $S_0=S_0'$. Let $R'=S_0/tS'_1$ and $R=S_0/tS_1$,
thus $R=R'/I$ with $I=tS_1/tS_1'$.
The composition $S\xrightarrow\tau S_0= S_0'\to R'$ 
is a ring homomorphism $\tilde\tau:S\to R'$,
and for a finite projective graded $S$-module $M$ 
we can define an $R'$-module 
\[
M_{R'}=M^\tau\otimes_{S_0}R'=M\otimes_{S,\tilde\tau}R'.
\]
Then $M_R=M_{R'}\otimes_{R'}R$.
If $M'$ is a finite projective graded $S'$-module and $M=M'\otimes_{S'}S$,
then $M_{R'}=M'_{R'}$, and 
the Hodge filtration $(E'_n)$ of $M'_{R'}$ is a lift of the Hodge filtration
$(E_n)$ of $M_R$.

\begin{Prop}
\label{Pr:Hodge-lift}
Assume that for $n\ge 1$ the homomorphism 
$\alpha_n:S'_n\to S_n$ is injective
and $t^n:S_n\to S_0$ induces an isomorphism $S_n/S'_n\xrightarrow\sim I$,
and that finite projective $R$-modules can be lifted to $R'$.
Then the functor from finite projective graded $S'$-modules 
(or displays over $\u S'$)
to finite projective graded $S$-modules $M$ (or displays $\u M$ over $\u S$) 
together with a lift of the Hodge filtration of $M_R$ to a filtration of $M_{R'}$ 
by direct summands is an equivalence.
\end{Prop}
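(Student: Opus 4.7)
The plan is to handle the equivalence at the level of finite projective graded modules first, with the display structure descending automatically. Under the hypothesis combined with Remark~\ref{Rk:S0pRp-nil} applied to $\u S'$ (so that the kernel $tS'_1$ of $S_0\to R'$ is nil modulo $p$) and the frame axiom $p\in\Rad(S_0)$, finite projective $R$-modules lift all the way to $S_0$, so by Lemma~\ref{Le:proj-mod-std} every finite projective graded $S$- or $S'$-module is standard projective and I may work throughout with normal decompositions.

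For essential surjectivity, given $(M,\tilde E_\bullet)$, I would first split $\tilde E_\bullet$ to obtain a grading $M_{R'}=\bigoplus_i\tilde L_i$ with $\tilde E_n=\bigoplus_{i\geq n}\tilde L_i$, then lift each $\tilde L_i$ to a finite projective $S_0$-module $L_i$ and form $L=\bigoplus_iL_i$. The reductions of $L\otimes_{S_0}S$ and $M$ under $\rho$ both agree with $\gr^\bullet M_R$ as graded $R$-modules, so by projectivity and Corollary~\ref{Co:NAK-proj} one obtains an isomorphism $\phi\colon L\otimes_{S_0}S\xrightarrow\sim M$ of graded $S$-modules. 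The induced $\phi_{R'}$ and the chosen identification $L\otimes_{S_0}R'\cong M_{R'}$ differ by a graded automorphism of $L\otimes R'$ that is the identity modulo $I$, namely $\bigoplus_i\gamma_i$ with $\gamma_i\in 1+\End_{R'}(L_i\otimes R')\cdot I$. Using the frame-axiom surjection $tS_1\twoheadrightarrow I$ (with kernel $tS'_1$) together with the projectivity of $L_i$, each $\gamma_i$ lifts to an automorphism $\tilde\gamma_i\in\Aut_{S_0}(L_i)$; composing $\phi$ with $\bigoplus\tilde\gamma_i$ extended $S$-linearly yields an isomorphism matching the standard Hodge lift of $L$ with $\tilde E_\bullet$. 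Setting $M'=L\otimes_{S_0}S'$ then gives $M'\otimes_{S'}S=M$ with the desired Hodge lift.

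For full faithfulness, the matrix description \eqref{Eq:End-gr-M} identifies $\Hom_S^0(L\otimes S,K\otimes S)=\bigoplus_{i,j}\Hom_{S_0}(L_j,K_i)\otimes_{S_0}S_{j-i}$ and similarly $\Hom_{S'}^0(L\otimes S',K\otimes S')=\bigoplus_{i,j}\Hom_{S_0}(L_j,K_i)\otimes_{S_0}S'_{j-i}$. The hypothesis that $t^{j-i}$ induces $S_{j-i}/S'_{j-i}\xrightarrow\sim I$ yields $\ker(S_{j-i}\to R')=S'_{j-i}$ for $j-i>0$, and chasing the standard Hodge lifts shows that a graded $S$-homomorphism preserves them precisely when each $(j,i)$-entry with $j>i$ lies in $\Hom_{S_0}(L_j,K_i)\otimes S'_{j-i}$. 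The display version is automatic since a normal decomposition $L$ of $M'$ is simultaneously one of $M$, giving identifications $(M')^\tau=L=M^\tau$ and $(M')^\sigma=L^{\sigma_0}=M^\sigma$, under which a $\sigma$-linear isomorphism on $M$ and a $\sigma'$-linear isomorphism on $M'$ carry the same data. The main obstacle is the lifting step in essential surjectivity—upgrading an identity-mod-$I$ graded automorphism of $L\otimes R'$ to a genuine graded automorphism of $L$ over $S_0$—which relies crucially on the short exact sequence $0\to S'_1\to S_1\xrightarrow{t}I\to 0$ from the hypothesis.
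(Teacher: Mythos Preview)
Your full faithfulness argument and the reduction of the display case to the module case are correct, and your overall strategy of working with normal decompositions is close in spirit to the paper's. However, there is a genuine gap in the essential surjectivity step.

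The problem is the claim that $\psi^{-1}\circ\phi_{R'}$ (with $\psi$ the chosen identification $L\otimes_{S_0}R'\cong M_{R'}$) is a \emph{graded} automorphism of $L\otimes R'$, i.e.\ of the form $\bigoplus_i\gamma_i$, and that it is the identity modulo $I$. Your map $\phi$ was only constrained via $\rho:S\to R$, which sees the associated graded $\gr^\bullet_E M_R$; the reduction $\bar\tau:S\to R$ (and a fortiori $\tilde\tau:S\to R'$) is a different homomorphism, so nothing forces $\phi_{R'}$ to carry $L_j\otimes R'$ into $\tilde L_j$, nor even $\phi_R$ to agree with $\psi_R$. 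Concretely, using the hypothesis $S_n/S'_n\xrightarrow{\sim}I$ one computes that the image of $M_j\to M_{R'}$ is the full preimage of $E_j$ under $M_{R'}\to M_R$, so $\phi_{R'}(L_j\otimes R')$ lies only in $\tilde E_j+I\cdot M_{R'}$. Hence $\psi^{-1}\circ\phi_{R'}$ is merely filtration-preserving and unipotent on the associated graded, not block-diagonal; a componentwise lift to $\Aut_{S_0}(L_i)$ does not correct it.

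The fix is to build $\phi$ compatibly with the Hodge lift from the outset. The computation just mentioned shows that $M_j\to M_{R'}$ surjects onto the preimage of $E_j$, which contains $\tilde L_j$; hence by projectivity of $L_j$ one may lift the map $L_j\to\tilde L_j\subseteq M_{R'}$ directly to a map $L_j\to M_j$. The resulting $\phi:L\otimes_{S_0}S\to M$ still reduces under $\rho$ to the identification of associated gradeds, hence is an isomorphism by Corollary~\ref{Co:NAK-proj}, and now by construction $\phi_{R'}$ sends $\bigoplus_{i\ge n}L_i\otimes R'$ into $\tilde E_n$, with equality by a rank count. This is essentially the paper's argument, which packages the same idea intrinsically by defining $M'_n\subseteq M_n$ as the inverse image of $\tilde E_n$ and then exhibiting a normal decomposition $L\to M$ landing in $M'$.
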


\begin{proof}
It suffices to treat the case of modules  
because for a finite projective graded $S'$-module $M'$ and $M=M'\otimes_{S'}S$ 
we have $M^\tau=M'^\tau$ and $M^\sigma=M'^\sigma$, 
so display structures on $M$ and on $M'$ are the same.

For a finite projective graded $S'$-module $M'$ 
with Hodge filtration $(E_n')$ and $M=M'\otimes_{S'}S$, 
the submodule $M'_n\subseteq M_n$ is the inverse image
of $E_n'\subseteq M'_{R'}=M_{R'}$
under $M_n\to M^\tau\to M_{R'}$. 
Indeed, this assertion holds for $M'=S'(d)$ 
by the assumptions on $\u S'\to\u S$,
and then it holds for every $M'$ 
since the property passes to direct summands.
It follows that the functor of the proposition is fully faithful.

Let us verify that the functor is essentially surjective.
If $M$ over $S$ with a lift of the Hodge filtration $(E'_n)$ is given, 
we necessarily define $M'_n\subseteq M_n$ 
as the inverse image of $E'_n\subseteq M_{R'}$.
Then $M'=\bigoplus M_n'$ is a graded $S'$-module.
We have to verify that
$M'$ is finite projective and that $M'\otimes_{S'}S\to M$ is bijective.
We can replace $M$ by $M\oplus N$ for another finite projective graded $S$-module
$N$ and thus assume that $M=L\otimes_{S_0}S$ for a finite projective graded
$S_0$-module $L$; note that the Hodge filtration of $N$ can be lifted to $R'$
by the assumption. The assumptions on $\u S'\to\u S$ imply that the kernel
of $M_n\to M_R$ maps surjectively onto $IM_{R'}$, 
hence $M_n'\to E'_n$ is surjective.
It follows that the projection $L\to L\otimes_{S_0}R=M\otimes_{S,\rho}R$
can be lifted to a homomorphism of graded $S_0$-modules $L\to M$ with
image in $M'$. The resulting homomorphism $L\otimes_{S_0}S\to M$ is an
isomorphism by Corollary \ref{Co:NAK-proj}, and it induces an isomorphism
$L\otimes_{S_0}S'\cong M'$ by the first part of the proof.
\end{proof}

\begin{Cor}
\label{Co:Hodge-lift-orth}
In the situation of Proposition \ref{Pr:Hodge-lift}, 
orthogonal displays (or orthogonal graded modules) over $\u S'$ 
are equivalent to orthogonal displays (or orthogonal graded modules) over
$\u S$ together with a self-dual lift of the Hodge filtration to ${R'}$.
\end{Cor}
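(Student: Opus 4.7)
The plan is to deduce the corollary directly from Proposition \ref{Pr:Hodge-lift} by tracking how the equivalence behaves under duality, and then reading off the self-duality of the Hodge lift from the symmetry of a perfect bilinear form.

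First, I would check that the equivalence of Proposition \ref{Pr:Hodge-lift} is compatible with tensor products and with the formation of duals. For tensor products this is immediate: base change $(-)\otimes_{S'}S$ is monoidal, and the Hodge filtration of $M'\otimes_{S'}N'$ is the usual convolution of Hodge filtrations, which lifts termwise. For duality, I would pick a normal decomposition $M'=L'\otimes_{S'_0}S'$ (harmless after enlarging $M'$ by a summand whose Hodge lift exists by assumption). Then $(M')^*=(L')^*\otimes_{S'_0}S'$, whose Hodge filtration at level $n$ is $\bigoplus_{i\ge n}(L'_{-i})^*\otimes_{S'_0}R'$, and under the canonical identification $((M')^*)_{R'}\cong (M'_{R'})^*$ this is exactly the annihilator $(E'_{1-n})^\perp$ of the shifted Hodge filtration of $M'_{R'}$. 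Hence the equivalence sends $(M')^*$ with its Hodge lift to the pair $(M^*, ((E'_{1-n})^\perp)_{n})$.

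Next, a perfect symmetric bilinear form on $M'$ is the same datum as a symmetric isomorphism of graded $S'$-modules $\alpha':M'\xrightarrow{\sim}(M')^*$. Because the equivalence of Proposition \ref{Pr:Hodge-lift} is fully faithful, giving such an $\alpha'$ is equivalent to giving a symmetric isomorphism $\alpha:M\xrightarrow{\sim}M^*$ (i.e.\ a perfect symmetric form $\beta$ on $M$) together with the compatibility $\alpha(E'_n)\subseteq(E'_{1-n})^\perp$ for each $n$, the right side being the Hodge lift of $M^*$ computed above. Since $E'_n$ and $(E'_{1-n})^\perp$ are locally direct summands of $M_{R'}$ of complementary rank, this inclusion is automatically an equality, which is exactly the self-duality condition $(E'_n)^\perp=E'_{1-n}$ with respect to the bilinear form on $M_{R'}$ induced by $\beta$.

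For the display version I would note that, under base change, the identifications $M^\tau=M'^\tau$ and $M^\sigma=M'^\sigma$ are tautological, so the Frobenius $F$ is literally the same on both sides and compatibility of $\beta$ with $F$ transfers between $\u S$ and $\u S'$ with nothing to check. The only real content is the duality computation in the first paragraph; the potential obstacle there is verifying that the Hodge lift of $(M')^*$ produced by the functor really is $((E'_{1-n})^\perp)_n$, but this reduces to the one-line grading check on $L'$ after a normal decomposition, and the remainder of the argument is a formal unwinding.
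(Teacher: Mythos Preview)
Your proposal is correct and follows essentially the same approach as the paper: the paper's proof is a one-line observation that the isomorphism $\beta':M\to M^*$ lifts to $M'\to M'^*$ iff it preserves the Hodge filtrations, which is the self-duality condition. You have simply made explicit the two points the paper leaves to the reader, namely the identification of the Hodge lift on $(M')^*$ with $((E'_{1-n})^\perp)_n$ and the rank argument upgrading the inclusion $\alpha(E'_n)\subseteq (E'_{1-n})^\perp$ to an equality.
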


\begin{proof}
Let $(M,\beta)$ be an orthogonal graded module over $\u S$ and let $M'$ be
a lift of $M$ to $S'$. 
The isomorphism $\beta':M\to M^*$ corresponding to $\beta$
lifts to an isomorphism $M'\to M'^*$ iff $\beta'$ preserves the Hodge filtration,
which means that the Hodge filtration of $M'$ is self-dual with respect to $\beta$.
\end{proof}

\begin{Example}
For a PD thickening of $p$-adic rings $B\to A$,
the frame homomorphism $\u W(B)\to \u W(B/A)$ satisfies the hypotheses
of Proposition \ref{Pr:Hodge-lift}.
Similarly, if $B\to A$ is a PD thickening of admissible local Artin rings
(Example \ref{Ex:frame-WW}) with nilpotent divided powers,
$\u\WW(B)\to\u\WW(B/A)$ satisfies the hypotheses
of Proposition \ref{Pr:Hodge-lift}.
This is easily verified.
\end{Example}

\subsection{The Hodge filtration of a $G$-display}

Let $(G,\mu)$ be a display datum.
The Hodge filtration of displays has a natural counterpart for $G$-displays.
We will only treat the case of $G$-displays over $p$-adic frames as
in \S \ref{Se:G-disp-nil-frame} and leave out the obvious modifications 
for $G$-displays over Witt vectors as in \S \ref{Se:G-disp-W}.

If $\u S$ is a frame over $W(k)$ for $R$,
the ring homomorphism $\bar\tau:S\to R$ 
of \S \ref{Se:Hodge-fil-disp} induces a group homomorphism 
\[
\bar\tau:G(S)_\mu\to G(R)
\]
with image in $P^-(R)$; see Proposition \ref{Pr:decomp}.
Let $\bar\tau_0:G(S)_\mu\to P^-(R)$ be the resulting group homomorphism.
Assume that $\u S$ is a $p$-adic frame.
Then $\bar\tau$ and $\bar\tau_0$ are homomorphisms of etale sheaves over $\Spec R$.
For an etale $G(\bS)_\mu$-torsor $Q$ we can form the $G_R$-torsor
$Q_R=Q^{\bar\tau}$ and the $P^-_R$-torsor
\[
Q_0=Q^{\bar\tau_{0}}\subseteq Q_R,
\]
which will be called the Hodge filtration of $Q_R$.
Note that a $G$-display over $\u S$ has an underlying $G(\bS)_\mu$-torsor $Q$; cf.\ Remark \ref{Rk:G-disp}.

Now let $\alpha:\u S'\to\u S$ be a homomorphism 
of $p$-adic frames
over $W(k)$
with $S'_0=S_0$ as in \S \ref{Se:lifts-Hodge}.
Then $I=\Ker(R'\to R)$ is a nil-ideal
since this holds for the kernel of $S_0/p\to R/p$ 
by Remark \ref{Rk:S0pRp-nil}.
Thus the categories of affine etale
schemes over $R$ and over $R'$ are equivalent,
and  we can consider $\u S$ and $\u S'$ as sheaves on the same site.

The ring homomorphism $\tilde\tau:S\to R'$ 
induces a group homomorphism $\tilde\tau:G(S)_\mu\to G(R')$,
and for a $G(\bS)_\mu$-torsor $Q$ 
we can form the etale $G_{R'}$-torsor $Q_{R'}=Q^{\tilde\tau}$.
If $Q'$ is an etale $G(\bS')_\mu$-torsor and $Q$ the induced
$G(\bS)_\mu$-torsor then $Q_{R'}=Q'_{R'}$,
and the Hodge filtration $Q_0'\subseteq Q'_{R'}$
is a lift of the Hodge filtration $Q_0\subseteq Q_R$
in the sense that the reduction map $Q'_{R'}\to Q_R$
induces $Q_0'\to Q_0$.

\begin{Prop}
\label{Pr:Hodge-lift-G}
Assume that for each $n\ge 1$ with $\Fg_n\ne 0$ in 
\eqref{Eq:Lie-GWk} the homomorphism $\alpha_n:S'_n\to S_n$ is injective,
and $t^n:S_n\to S_0$ 
induces an isomorphism $S_n/S_n'\cong I$.
Then the functor from etale $G(\bS')_\mu$-torsors
(or $G$-displays of type $\mu$ over $\u S'$) to etale $G(\bS)$-torsors $Q$
(or $G$-displays $\u Q$ of type $\mu$ over $\u S$)
together with a lift of the Hodge filtration of $Q_R$ to 
an etale $P^-_{R'}$-torsor in $Q_{R'}$ is an equivalence.
\end{Prop}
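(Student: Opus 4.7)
The plan is to adapt the proof of Proposition \ref{Pr:Hodge-lift} to the $G$-display setting, using the structural decomposition of Proposition \ref{Pr:decomp} in place of the matrix computation. The essential algebraic inputs are (i) a formula for the cokernel of $\alpha_*\colon G(\bS')_\mu\to G(\bS)_\mu$ as etale sheaves, and (ii) an identification of this cokernel with the set of lifts of the Hodge filtration via $\tilde\tau$.

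For (i), combining Proposition \ref{Pr:decomp}, Lemma \ref{Le:tau-GSmu-GS0}, and the $\Gm$-equivariant scheme isomorphism $U^+\cong V(\Lie U^+)$ of Lemma \ref{Le:log-U}, one obtains for any frame $\u T$ over $W(k)$ a canonical set-theoretic bijection
$$
G(T)_\mu \;\cong\; P^-(T_0)\,\times\,\bigoplus_{n\geq 1}\Fg_n\otimes_{W(k)}T_n.
$$
Applied to $\u S$ and $\u S'$ (which share $S_0=S'_0$), only the indices $n$ with $\Fg_n\neq 0$ contribute to the $U^+$-factor, so the hypothesis forces $\alpha_*$ to be injective with cokernel $\bigoplus_{n\geq 1,\,\Fg_n\neq 0}\Fg_n\otimes I$, where $I=\ker(R'\to R)=tS_1/tS'_1$. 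The isomorphisms $t^n\colon\bS_n/\bS'_n\xrightarrow{\sim} I$ then identify this cokernel with $U^+(I):=\ker(U^+(R')\to U^+(R))$ via the same log iso.

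For (ii), for a trivial $G(\bS)_\mu$-torsor with trivialization $g\in G(\bS)_\mu$, the Hodge lift attached to the reduction $g\cdot G(\bS')_\mu$ is the coset $\tilde\tau(g)\cdot P^-(R')\subseteq G(R')$, which depends only on $g$ modulo $G(\bS')_\mu$. Writing $g=p\cdot u$ via the decomposition gives $\tilde\tau(g)=\tilde\tau(p)\tilde\tau(u)$ with $\tilde\tau(p)\in P^-(R')$ and $\tilde\tau(u)\in U^+(I)$; hence $\tilde\tau(g)\in P^-(R')$ iff $\tilde\tau(u)\in P^-(R')\cap U^+(R')=\{1\}$ iff $u\in U^+(\bS')_\mu$ (by the kernel computation of (i)) iff $g\in G(\bS')_\mu$. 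Combined with the open immersion $U^+\hookrightarrow G/P^-$ identifying $U^+(I)$ with the set of $P^-_{R'}$-cosets in $G(R')/P^-(R')$ lifting the trivial coset mod $I$, the assignment $g\mapsto\tilde\tau(g)P^-(R')$ descends to a bijection $G(\bS)_\mu/G(\bS')_\mu\xrightarrow{\sim} U^+(I)$, matching the cokernel of (i) with the set of Hodge lifts.

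The stack equivalence then follows by etale descent. Since $I$ is a nil-ideal (Remark \ref{Rk:S0pRp-nil}), the small etale sites of $\Spec R$ and $\Spec R'$ coincide, so all sheaves involved are defined on a common site. Given $(Q,\tilde Q_0)$, an etale-local trivialization of $Q$ converts $\tilde Q_0$ into a local section of $U^+(I)$; via (ii) this lifts to a class in $G(\bS)_\mu/G(\bS')_\mu$, and the corresponding subsheaf $g\cdot G(\bS')_\mu\subseteq Q$ is the desired $G(\bS')_\mu$-reduction, with local reductions gluing canonically. Faithfulness follows from injectivity of $\alpha_*$, and fullness from the exactness $G(\bS')_\mu\hookrightarrow G(\bS)_\mu\twoheadrightarrow U^+(I)$. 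The main technical obstacle lies in (ii): one must verify that the bijection $G(\bS)_\mu/G(\bS')_\mu\cong U^+(I)$, constructed via a fixed trivialization, descends from trivial torsors to arbitrary torsors. This reduces to a $G(\bS')_\mu$-equivariance check for change of trivialization and a compatibility check with the $P^-(R')$-action on the Hodge side, both of which follow from Proposition \ref{Pr:decomp} together with the fact that $\tilde\tau\colon G(\bS)_\mu\to G(R')$ is a group homomorphism restricting to $\bar\tau'$ on $G(\bS')_\mu$.
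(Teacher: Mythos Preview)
Your approach is essentially the same as the paper's: both rest on the decomposition of Proposition~\ref{Pr:decomp} together with Lemma~\ref{Le:log-U} to compare $G(S')_\mu$ with $G(S)_\mu$ via the $U^+$-factor, and both identify the discrepancy with $U^+(I)$, which parametrizes lifts of the Hodge filtration. The paper packages your steps (i) and (ii) into a single cartesian square (Lemma~\ref{Le:Hodge-lift-G})
\[
\xymatrix@M+0.2em{
G(S')_\mu \ar[r] \ar[d] & G(S)_\mu \ar[d] \\
P^-(R') \ar[r] & G(R')\times_{G(R)}P^-(R)
}
\]
with injective horizontals and surjective verticals; this is exactly your injectivity/cokernel computation, rephrased. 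The payoff of the cartesian formulation is in the essential surjectivity: rather than trivializing $Q$ locally, producing local $G(\bS')_\mu$-reductions, and then checking that they glue (your ``technical obstacle''), the paper simply defines $Q'\subseteq Q$ globally as the sheaf-theoretic preimage of $Q_0'\subseteq Q_{R'}$ under $\tilde\tau$, and then uses the cartesian square plus surjectivity of the right vertical to check \emph{locally} that $Q'$ is a $G(\bS')_\mu$-torsor. This sidesteps the equivariance verification you flag.

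Two small points you omit that the paper does address: the reduction from $G$-displays to torsors (since $S_0=S'_0$ gives $Q^\sigma=Q'^\sigma$ and $Q^\tau=Q'^\tau$, so display structures coincide), and the stability of the hypothesis on $\alpha_n$ under etale base change (needed to apply the argument over every $R'$ in the site). Also, your use of ``cokernel'' and ``exactness'' is informal since $G(\bS')_\mu$ is not normal in $G(\bS)_\mu$; the cartesian-square language avoids this.
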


The proof of Proposition \ref{Pr:Hodge-lift-G} is based on the following lemma.

\begin{Lemma}
\label{Le:Hodge-lift-G}
Under the assumptions of Proposition \ref{Pr:Hodge-lift-G}
there is a cartesian square with injective horizontal maps and
surjective vertical maps:
\[
\xymatrix@M+0.2em{
G(S')_\mu \ar[r] \ar[d] &
G(S)_\mu \ar[d] \\
P^-(R') \ar[r] & G(R')\times_{G(R)}P^-(R)
}
\]
\end{Lemma}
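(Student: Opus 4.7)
The plan is to reduce each of the four corners to a product using the decomposition \(G(S)_\mu \cong P^-(S_0) \times U^+(S)_\mu\) of Proposition \ref{Pr:decomp} (combined with the bijection \(\tau\colon P^-(S)_\mu \cong P^-(S_0)\) of Lemma \ref{Le:tau-GSmu-GS0}), and similarly for \(\u S'\); since \(S_0 = S'_0\), the two decompositions share their \(P^-\)-factor and the top horizontal map \(\alpha\) is the identity there. Using the \(\Gm\)-equivariant scheme isomorphism \(\log^+\colon U^+ \cong V(\Lie U^+)\) from Lemma \ref{Le:log-U}, I would identify \(U^+(S)_\mu\) and \(U^+(S')_\mu\) (as sets) with \(\prod_{n\ge 1,\,\Fg_n\ne 0}\Fg_n\otimes_{W(k)}S_n\) and its analog with \(S'_n\); under this identification, \(\tilde\tau\colon U^+(S)_\mu\to U^+(R')\) becomes the sum of the maps \(\Fg_n\otimes(t^n\colon S_n\to R')\). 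Since \(t^n(S_n)\subseteq tS_1\) for \(n\ge 1\), this map lands in \(U^+(I)\) where \(I=tS_1/tS'_1\), and the hypothesis that \(t^n\) induces an isomorphism \(S_n/S'_n \cong I\) says precisely that \(\tilde\tau\colon U^+(S)_\mu\to U^+(I)\) is surjective with kernel \(\alpha(U^+(S')_\mu)\).

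Injectivity of the horizontal maps follows immediately: \(\alpha\) is the identity on \(P^-(S_0)\) and a componentwise injection on \(U^+\) (each \(\Fg_n\) being \(W(k)\)-free and \(S'_n\hookrightarrow S_n\)), while the bottom map is injective because \(P^-(R')\hookrightarrow G(R')\) is. For the vertical surjectivities, both vertical maps factor as projection to the \(P^-(S_0)\)-factor followed by reduction to \(P^-(R')\) or \(P^-(R)\) respectively (by the first paragraph \(\tilde\tau\) on the \(U^+\)-factor lands in \(U^+(I)\) and so maps to the identity in \(P^-(R)\) and, modulo \(I\), also leaves the \(P^-(R')\)-component untouched). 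The left vertical then surjects by smoothness of \(P^-\) over \(W(k)\) applied to \(S_0\to R'\), whose kernel \(tS'_1\) lies in \(\Rad(S_0)\) and which satisfies the standard smooth-lifting criterion by \(p\)-adic completeness of \(S_0\); take \(g' = (p,1)\). For the right vertical, given \((y,q)\in G(R')\times_{G(R)}P^-(R)\), I would use the open immersion \(P^-\times U^+\hookrightarrow G_{W(k)}\): since \(\bar y=q\in P^-(R)\) and \(I\) is nil (so \(\Spec R\) and \(\Spec R'\) share underlying space), the morphism \(y\colon\Spec R'\to G\) factors through the open \(P^-\cdot U^+\), giving \(y=p_0u_0\) with \(p_0\in P^-(R')\), \(u_0\in U^+(R')\); from \(\bar y=q\) and \(P^-\cap U^+=\{1\}\) one has \(\bar p_0=q\) and \(u_0\in U^+(I)\). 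Lifting \(p_0\) to \(p\in P^-(S_0)\) and \(u_0\) to \(u\in U^+(S)_\mu\) by the first paragraph, the element \(g=(p,u)\) maps to \((y,q)\).

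For the cartesian property, given \((g,x)\in G(S)_\mu\times P^-(R')\) whose images agree in \(G(R')\times_{G(R)}P^-(R)\), decompose \(g=(p,u)\). The agreement in \(G(R')\) reads \(\bar p\cdot\tilde\tau(u)=x\), with \(\bar p,x\in P^-(R')\) and \(\tilde\tau(u)\in U^+(R')\); since \(P^-(R')\cap U^+(R')=\{1\}\), this forces \(\bar p=x\) and \(\tilde\tau(u)=1\). By the first paragraph, \(\tilde\tau(u)=1\) gives a unique \(u'\in U^+(S')_\mu\) with \(\alpha(u')=u\), so \(g'=(p,u')\in G(S')_\mu\) is the unique preimage, proving the square is cartesian.

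The main obstacle—really the one place where the hypothesis is used crucially—is the identification in the first paragraph of the image \(U^+(I)\) and kernel \(\alpha(U^+(S')_\mu)\) of \(\tilde\tau\) on \(U^+(S)_\mu\); everything else is a routine combination of this with smooth lifting for \(P^-\) (where \(p\)-adic completeness of \(S_0\) and \(tS'_1\subseteq\Rad(S_0)\) enter) and the open-immersion decomposition \(P^-\times U^+\hookrightarrow G_{W(k)}\), which requires only that \(I\) be nil.
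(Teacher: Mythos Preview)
Your proof is correct and follows essentially the same approach as the paper: both reduce the diagram to its $P^-$- and $U^+$-factors via Proposition~\ref{Pr:decomp} and Lemma~\ref{Le:tau-GSmu-GS0}, identify the lower right corner with $P^-(R')\times U^+(I)$, and use the $\Gm$-equivariant isomorphism $\log^+$ of Lemma~\ref{Le:log-U} to translate the hypothesis on $S_n/S'_n\cong I$ into the statement that $\tilde\tau\colon U^+(S)_\mu\to U^+(I)$ is surjective with fibre over $1$ equal to $\alpha(U^+(S')_\mu)$. Your write-up is considerably more detailed than the paper's, which simply records these identifications and declares the lemma ``an easy consequence of Lemma~\ref{Le:log-U}''.

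One small point worth tightening: your justification for the surjectivity of $P^-(S_0)\to P^-(R')$ (``smoothness \ldots\ by $p$-adic completeness of $S_0$'') is correct but terse. The clean argument is that $(S_0,tS'_1)$ is a henselian pair: since $p$ is nilpotent in $R'$ one has $p^N\in tS'_1$ for some $N$, so it suffices (by $p$-adic completeness of $S_0$) to check that $(S_0/p^N,\,tS'_1/p^N)$ is henselian; but each $a\in tS'_1$ satisfies $a^p\in pS_0$ by Remark~\ref{Rk:S0pRp-nil}, hence $a^{pN}\in p^NS_0$, so $tS'_1/p^N$ is a nil-ideal and the pair is henselian. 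The paper does not spell this out either.
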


\begin{proof}
The vertical maps are induced by 
$\tilde\tau:G(S)_\mu\to G(R')$.
The multiplication $P^-(S)_\mu\times U^+(S)_\mu\to G(S)_\mu$
and its analogue for $S'$ are bijective.
Similarly the lower right corner of the diagram is bijective to
$P^-(R')\times U^+(I)$ where $U^+(I)$ denotes the kernel
of $U^+(R')\to U^+(R)$. 
Using that $P^-(S')_\mu\cong P^-(S_0)\cong P^-(S)_\mu$
by Lemma \ref{Le:tau-GSmu-GS0},
the lemma is an easy consequence of Lemma \ref{Le:log-U}.
\end{proof}

\begin{proof}[Proof of Proposition \ref{Pr:Hodge-lift-G}]
It suffices to treat the case of torsors because for an etale
$G(\bS')_\mu$-torsor $Q'$ and the induced etale $G(\bS)_\mu$-torsor $Q$
we have $Q^\sigma=Q'^\sigma$ and $Q^\tau=Q'^\tau$ as etale
torsors for $G(\bS_0)=G(\bS'_0)$, so display structures on
$Q$ and on $Q'$ are the same; see Remark \ref{Rk:G-disp}.

We claim that the hypothesis of Proposition \ref{Pr:Hodge-lift-G}
is preserved under etale base change.
The hypothesis is an exact sequence $0\to S'_n\to S_n\to I\to 0$
for certain $n$. 
For large $m$ so that $p^mR'=0$ this gives
$I\to S'_n/p^m\to S_n/p^m\to I\to 0$.
These sequences for a projective system with respect to $m$,
where the transition maps on the first module $I$ are eventually zero.
This property are preserved under etale base change,
and the claim follows.

Lemma \ref{Le:Hodge-lift-G} implies 
that the functor of the proposition is fully faithful.
To prove that the functor is essentially surjective let
a $G(\bS)_\mu$-torsor $Q$ and a lift $Q_0'\subseteq Q_{R'}$
of the Hodge filtration $Q_0\subseteq Q_{R}$ be given.
Let $Q'\subseteq Q$ be the inverse image of $Q_0'\subseteq Q_{R'}$
as an etale sheaf with an action of $G(\bS')_\mu$.
Etale locally, $Q$ and $Q_0'$ are trivial.
Since $Q_0'$ is a lift of $Q_0$ and since
$G(\bS)_\mu\to G(R')\times_{G(R)}P^-(R)$ is surjective,
there are compatible local trivializations of $Q$ and $Q_0'$,
and it follows that $Q'$ is a $G(\bS')_\mu$-torsor.
\end{proof}

\section{K3 displays and varieties}
\label{Se:K3-disp}

\begin{Defn}
A K3 display over a frame $\u S$ is a display $\u M$ of type $(2,1,\ldots,1,0)$
with a perfect symmetric bilinear form $\beta:\u M\otimes\u M\to\u S(-2)$.
\end{Defn}

\begin{Remark}
\label{Rk:K3-disp}
The bilinear form $\beta$ is equivalent to $\u M(1)\otimes\u M(1)\to\u S$,
so K3 displays are equivalent to orthogonal displays 
of type $\mu=(1,0,\ldots,0,-1)$,
which are equivalent to $G$-displays of type $\mu$ for the group
$G=\Ogr(\psi)$ of \S \ref{Se:orth-disp} 
by Proposition \ref{Pr:orth-disp}.
Here $\mu$ is minuscule by Example \ref{Ex:orth-minuscule}.
\end{Remark}

\begin{Remark}
A K3 display $\u M$ over $\u S$ is effective, so the associated $S_0$-module
$M^\tau$ carries a perfect symmetric bilinear form $\beta$
and a $\sigma_0$-linear endomorphism $F_0$ with 
$\beta(F_0(x),F_0(y))=p^2\sigma_0(\beta(x,y))$.
\end{Remark}

In the following let $A$ be a local Artin ring 
with perfect residue field $k$ of characteristic $p\ge 3$,
in particular $A$ is admissible  as in Example \ref{Ex:frame-WW}.
By Langer-Zink \cite{Langer-Zink:K3} the second crystalline cohomology of a
K3 surface $X$ over $A$, 
and more generally of a scheme of K3 type $X$ over $A$,
carries a natural structure of a K3 display over $\u\WW(A)$,
and also over $\u\WW(B/A)$ when $B\to A$ is a nilpotent PD thickening.
Let us recall this in some detail.
The only new aspect is the use of the unique lifting lemma
Proposition \ref{Pr:deformation-lemma}.

\subsection{K3 surfaces}
\label{Se:K3-surf}

We begin with the case of K3 surfaces.
A K3 surface over $A$ is a smooth proper scheme $X$ over $A$ such
that the special fibre $X_k$ is a K3 surface.
Since $\WW(B/A)\to A$ is a $p$-adic PD thickening, 
the crystalline cohomology group
\[
H=H_{B/A}(X)=H^2_{\crys}(X/\WW(B/A))
\]
is defined. It is a free module over $\WW(B/A)$ of rank $22$ 
with a $\sigma$-linear map $F:H\to H$,
and the cup product is a perfect symmetric bilinear form $\gamma$ 
on $H$ with $\gamma(Fx,Fy)=p^2\sigma(\gamma(x,y))$.

The module $H$ carries a display structure which is natural in $X$ and in $B\to A$, 
i.e.\ there is a K3 display $\u M=\u H{}_{B/A}(X)$
over $\WW(B/A)$ with an isomorphism $M^\tau\cong H$
which preserves the Frobenius and the quadratic form.
In the case $B=A$ we write $\u H{}_{A/A}(X)=\u H{}_A(X)$.
The K3 display $\u H{}_{B/A}(X)$ is constructed as follows.
Let $R=W(k)[[t_1,\ldots,t_{20}]]$ be the universal deformation ring
of $X_k$ and let $Y$ over $\Spf R$ be the universal deformation.
Let $\sigma_R:R\to R$ be the Frobenius lift defined by $t_i\mapsto t_i^p$
and let $\u S$ be the tautological frame of 
Example \ref{Ex:frame-tautological} defined by $S=R[t]$.
The cohomology group
\[
H^2_{DR}(Y/R)=H^2_{\crys}(Y/R)
\]
with its Hodge filtration and Frobenius 
defines a K3 display $\u M(Y)$ over $\u S$;
see Example \ref{Ex:disp-tautological}.
The Lazard homomorphism $R\to W(R)$ corresponding to $\sigma_R$ 
(see \cite[Chap.~VII, Prop.~4.12]{Lazard})
induces a frame homomorphism $\u S\to\u\WW(R)$.
The choice of a lift of $X$ to $B$ gives a ring homomorphism $R\to B$.
Now $\u H{}_{B/A}(X)$ is defined as the base change of $\u M(Y)$ under the
frame homomorphism $\u S\to\u\WW(R)\to\u\WW(B)\to\u\WW(B/A)$.
One has to show that this is independent of the chosen lift
of $X$ to $B$, which follows from \cite[Corollary 20]{Langer-Zink:K3}
and its proof.

The homomorphism $\u\WW(B/A)\to\u\WW(A)$ induces an equivalence of K3 displays
by Proposition \ref{Pr:deformation-lemma} and Remark \ref{Rk:K3-disp},
so $\u H{}_{B/A}(X)$ is the unique lift of $\u H{}_A(X)$.
The construction of $\u H{}_{B/A}(X)$ explained before implies 
that the underlying module is given by the crystalline cohomology of $X$,
which is essential for the following result.

\begin{Thm}
\label{Th:K3-surface}
Let $B\to A$ be a surjective homomorphism of local Artin rings 
with perfect residue field $k$ of characteristic $p\ge 3$ and
let $X$ be a K3 surface over $A$.
Then deformations of $X$ to a K3 surface over $B$ are
equivalent to deformations of the K3 display $\u H{}_A(X)$
over $\u\WW(A)$ to a K3 display over $\u\WW(B)$.
\end{Thm}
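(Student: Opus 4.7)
The plan is to reduce the theorem to a single small PD thickening and then combine the unique lifting lemma with the Hodge filtration description, using the two-step factorization
\[
\u\WW(B)\xrightarrow{\;\alpha\;}\u\WW(B/A)\xrightarrow{\;\varepsilon\;}\u\WW(A).
\]
First I would observe that any surjection $B\to A$ of local Artin rings with the same residue field factors as a composition of small surjections whose kernel $J$ satisfies $\Fm_B\cdot J=0$. Such a $J$ is a $k$-vector space and carries trivial nilpotent divided powers compatible with those on $p$, so each step becomes a nilpotent PD thickening of admissible local Artin rings. By Example \ref{Ex:WBA-sigma-nil-thickening} the corresponding $\varepsilon$ is then a stable $\dot\sigma$-nilpotent thickening, while $\alpha$ satisfies the hypotheses of Proposition \ref{Pr:Hodge-lift}. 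Thus it suffices to treat one such small extension.

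Next I would analyse the two halves of the factorization separately. By Remark \ref{Rk:K3-disp} a K3 display is the same as a $G$-display of minuscule type $(1,0,\ldots,0,-1)$ for $G=\Ogr(\psi)$, so Proposition \ref{Pr:deformation-lemma} shows that $\varepsilon$ induces an equivalence of K3 displays. In particular $\u H{}_A(X)$ admits a unique lift $\u H{}_{B/A}(X)$ to $\u\WW(B/A)$, which by the construction recalled in \S\ref{Se:K3-surf} has underlying module $H^2_{\crys}(X/\WW(B/A))$ with its Frobenius and its cup product. For the second step, Corollary \ref{Co:Hodge-lift-orth} identifies lifts of $\u H{}_{B/A}(X)$ to a K3 display over $\u\WW(B)$ with self-dual lifts to $B$ of the Hodge filtration of
\[
H^2_{\crys}(X/\WW(B/A))\otimes_{\WW(B/A)}B\;\cong\;H^2_{DR}(X/A)\otimes_A B.
\]

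Finally I would match this with the geometric side. Assigning to a lift $\tilde X/B$ the K3 display $\u H{}_B(\tilde X)$ defines a functor from geometric deformations of $X$ to K3 display lifts of $\u H{}_A(X)$; by naturality of the construction in \S\ref{Se:K3-surf} its reduction is $\u H{}_A(X)$, and the self-dual lift of the Hodge filtration associated to it under Corollary \ref{Co:Hodge-lift-orth} is precisely the Hodge filtration $\Fil^1H^2_{DR}(\tilde X/B)$ sitting inside $H^2_{\crys}(\tilde X/\WW(B))\otimes_{\WW(B)}B$. To conclude, I would invoke the classical filtered crystalline deformation theory of K3 surfaces, due to Deligne: the groupoid of K3 deformations of $X$ to $B$ is equivalent, via the crystalline period map, to the groupoid of self-dual lifts of $\Fil^1H^2_{DR}(X/A)$ to a direct summand of $H^2_{\crys}(X/\WW(B/A))\otimes_{\WW(B/A)}B$. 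Composing this equivalence with the one of Corollary \ref{Co:Hodge-lift-orth} and the one given by $\varepsilon$ yields the theorem.

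The hardest step is the last one: identifying the geometric deformation groupoid of K3 surfaces with the groupoid of self-dual lifts of the Hodge filtration. This relies on the unobstructedness of K3 surfaces together with the filtered crystalline Torelli-type description of their deformations; once that input is granted, the result is obtained by concatenating Proposition \ref{Pr:deformation-lemma} for $\varepsilon$ with Corollary \ref{Co:Hodge-lift-orth} for $\alpha$.
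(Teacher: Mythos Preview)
Your proposal is correct and follows essentially the same route as the paper: reduce to a square-zero (nilpotent PD) extension, use Proposition~\ref{Pr:deformation-lemma} via Remark~\ref{Rk:K3-disp} to make $\varepsilon$ an equivalence of K3 displays, use Corollary~\ref{Co:Hodge-lift-orth} to translate lifts along $\alpha$ into self-dual Hodge filtration lifts, and then invoke Deligne's theorem \cite[Th\'eor\`eme 2.1.11]{Deligne:Cristaux-ordinaires} on the geometric side.

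One small correction: the identification $H^2_{\crys}(X/\WW(B/A))\otimes_{\WW(B/A)}B\cong H^2_{DR}(X/A)\otimes_A B$ in your second paragraph is not right; the left side is canonically $H^2_{\crys}(X/B)$, which is a free $B$-module lifting $H^2_{DR}(X/A)$ but not the trivial extension. You use the correct module $H^2_{\crys}(X/\WW(B/A))\otimes_{\WW(B/A)}B$ in your final paragraph when stating Deligne's result, so the argument is unaffected, but you should drop the erroneous isomorphism.
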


If $X$ is ordinary this is proved in \cite{Langer-Zink:K3}.

\begin{proof}
By induction
we can assume that the kernel of $B\to A$ has square zero,
so $B\to A$ is a PD thickening with nilpotent divided powers. 
Let $\u M=\u H{}_{B/A}(X)$. Then
\[
M_A=M^\tau\otimes_{\WW(B)} A\cong H^2_{\crys}(X/A)=H^2_{DR}(X/A)
\]
and the Hodge filtration of $M_A$ 
corresponds to the Hodge filtration of $H^2_{DR}$.
Similarly $M_B=M^\tau\otimes_{\WW(B)} B\cong H^2_{\crys}(X/B)$.
By \cite[Th\'eor\`eme 2.1.11]{Deligne:Cristaux-ordinaires} and its proof, 
the set of liftings of $X$ to $B$ is bijective to the set of 
self-dual liftings of the Hodge filtration of $H^2_{DR}(X/A)$
to $H^2_{\crys}(X/B)$.
On the display side, $\u M$ is the unique lift of $\u H{}_A(X)$
to a K3 display over $\u\WW(B/A)$, and by Corollary \ref{Co:Hodge-lift-orth}
liftings of $\u M$ to K3 displays over $\u\WW(B)$ correspond to 
self-dual liftings of the Hodge filtration of $M_A$ to $M_B$. 
The result follows.
\end{proof}

\subsection{Schemes of K3 type}

Let again $A$ be a local Artin ring with perfect residue field $k$
of characteristic $p\ge 3$.
Following Langer-Zink \cite{Langer-Zink:K3}, 
a scheme of K3 type over $A$ is a smooth proper
scheme $X$ over $A$ of dimension $2n$ with the following properties.

\begin{enumerate}
\item
$H^q(X_k,\Omega^p)=0$ for $p+q=1$ or $p+q=3$.
\item
$\dim_kH^q(X_k,\OOO_{X_k}) = 1$  for $q = 0,2$.
\item
$\dim_kH^0(X_k,\Omega^2) = 1$.
\item
Each generator $\sigma\in H^0(X_k,\Omega^2)$ is nowhere degenerate,
each generator $\rho\in H^2(X_k,\OOO_{X_k})$ satisfies $\rho^n\ne 0$
in $H^{2n}$,
and the bilinear form on $H^1(X_k,\Omega^1)$ defined by
$(\omega_1,\omega_2)\mapsto\tr(\omega_1\omega_2\sigma^{n-1}\rho^{n-1})$
is perfect.
\end{enumerate}
The assumptions imply that $X_k$ has a formally smooth universal deformation
space and that $H^q(X,\Omega^p)$ is a free
$A$-module compatible with base change for $p+q=2$.
One can lift $\sigma$ and $\rho$ to $A$ and consider 
$\sigma$ as an element of $H^2_{DR}(X/A)$.
Let $\tau\in H^2_{DR}(X/A)$ be a lift of $\rho$.
Then $\epsilon=\tr(\sigma^n\tau^n)\in A$ is defined.
We also assume that 
\begin{enumerate}
\addtocounter{enumi}{4}
\item
the number $n(n+1)$ is invertible in $k$, and $\epsilon=1$.
\end{enumerate}
The latter can always be achieved when $k$ is algebraically closed. 
In this situation \cite{Langer-Zink:K3} construct a symmetric bilinear form
$\BB_{\sigma,\tau}$ on $H^2_{DR}(X/A)$, called the Beauville-Bogomolov form,
which is well-defined up to an $n$-th root of unity, 
and which allows to extend the reasoning of \S \ref{Se:K3-surf} 
to schemes of K3 type with $\BB_{\sigma,\tau}$ in place of the cup product. 

More precisely, let us fix a scheme of K3 type $X_0$ over $k$ and generators
$\sigma$, $\rho$ as above over $k$ such that $\epsilon=1$.
Assume that $X_0$ can be lifted to a smooth projective scheme over $W(k)$.
For each scheme of K3 type $X$ over $A$ with special fibre $X_k=X_0$
there is a well-defined perfect symmetric bilinear form $\BB_{\sigma,\tau}$ 
on $H^2_{DR}(X/A)$; see \cite[Equation (67)]{Langer-Zink:K3}.
For a PD thickening $B\to A$ with nilpotent divided powers
the bilinear form extends to the crystalline cohomology $H^2_{\crys}(X/B)$,
and liftings of $X$ to $B$ correspond to self-dual liftings of the
Hodge filtration by \cite[Corollary 32]{Langer-Zink:K3}.
Moreover there is a K3 display $\u M=\u H{}_{B/A}(X)$ 
over $\u\WW(B/A)$ with an isomorphism
$M^\tau\cong H^2_{\crys}(X/\WW(B/A))$ that preserves the Frobenius
and the quadratic form, and $\u H{}_{B/A}(X)$ is functorial
in $X$ and in $B\to A$; see \cite[Proposition 35 \& Remark]{Langer-Zink:K3}.

It follows that the proof of Theorem \ref{Th:K3-surface} 
carries over to schemes of K3 type and gives the following result, 
which is proved in \cite{Langer-Zink:K3} when $X$ is ordinary.

\begin{Thm}
\label{Th:Def-K3-scheme}
Let $B\to A$ be a surjective homomorphism of local Artin rings 
with perfect residue field $k$ of characteristic $p\ge 3$ and
let $X$ be a scheme of K3 type over $A$ that satisfies the 
assumptions 1-5 such that $X_k$ can be lifted to a smooth projective
scheme over $W(k)$.
Then deformations of $X$ to a smooth scheme over $B$ 
are equivalent to deformations of the K3 display $\u H{}_A(X)$
over $\u\WW(A)$ to a K3 display over $\u\WW(B)$.
\qed
\end{Thm}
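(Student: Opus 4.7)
The plan is to follow the proof of Theorem \ref{Th:K3-surface} almost verbatim, with the Beauville-Bogomolov form $\BB_{\sigma,\tau}$ taking the role of the cup product. First I would reduce to the case where the kernel $J$ of $B\to A$ has square zero by induction on the length of the kernel; then $B\to A$ is automatically a PD thickening with nilpotent (in fact trivial) divided powers, so the relative Zink frame $\u\WW(B/A)$ of Example \ref{Ex:frame-WW-rel} is defined, and the relative K3 display $\u M=\u H{}_{B/A}(X)$ over $\u\WW(B/A)$ from the Langer-Zink construction is available.

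Next I would set up the two equivalences whose composition gives the theorem. On the one hand, by Remark \ref{Rk:K3-disp} a K3 display is the same as a $G$-display of type $\mu=(1,0,\ldots,0,-1)$ for the orthogonal group $G=\Ogr(\psi)$, and by Example \ref{Ex:orth-minuscule} the cocharacter $\mu$ is minuscule, so Example \ref{Ex:WBA-sigma-nil-thickening} and Proposition \ref{Pr:deformation-lemma} (the unique lifting lemma) apply to show that the frame homomorphism $\u\WW(B/A)\to\u\WW(A)$ induces an equivalence on K3 displays. Hence $\u M$ is the unique lift of $\u H{}_A(X)$ to $\u\WW(B/A)$, and deformations of $\u H{}_A(X)$ to $\u\WW(B)$ are the same as lifts of $\u M$ to K3 displays over $\u\WW(B)$. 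On the other hand, Corollary \ref{Co:Hodge-lift-orth} applied to the frame homomorphism $\u\WW(B)\to\u\WW(B/A)$ (which satisfies the hypotheses of Proposition \ref{Pr:Hodge-lift} as recorded in the last example of \S\ref{Se:lifts-Hodge}) identifies these lifts with self-dual lifts of the Hodge filtration of $M_A\cong H^2_{DR}(X/A)$ to $M_B\cong H^2_{\crys}(X/B)$ with respect to the quadratic form.

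On the geometric side, deformations of $X$ to a smooth scheme over $B$ are identified with self-dual lifts of the Hodge filtration of $H^2_{DR}(X/A)$ to $H^2_{\crys}(X/B)$ via \cite[Corollary 32]{Langer-Zink:K3}, which extends the argument of Deligne for K3 surfaces to schemes of K3 type via the Beauville-Bogomolov form $\BB_{\sigma,\tau}$. To glue the two descriptions I need that the isomorphism $M^\tau\cong H^2_{\crys}(X/\WW(B/A))$ intertwines the form $\beta$ of the K3 display with $\BB_{\sigma,\tau}$, which is exactly the content of the construction of $\u H{}_{B/A}(X)$ in \cite[Proposition 35 \& Remark]{Langer-Zink:K3}; both sides then compare to the same Hodge filtration in $M_A=H^2_{DR}(X/A)$.

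The main obstacle is purely bookkeeping rather than hard mathematics: one must check that the identifications of bilinear forms, Hodge filtrations, and crystalline cohomology of Langer-Zink are compatible with the frame-theoretic formalism developed here, so that the two ``self-dual lift of Hodge filtration'' categories really are the same category. Everything else is formal once the unique lifting lemma (Proposition \ref{Pr:deformation-lemma}) is applied to the minuscule orthogonal datum; notably, in contrast to the ordinary case treated in \cite{Langer-Zink:K3}, no ordinarity hypothesis is needed, because the unique lifting lemma covers all K3 displays uniformly.
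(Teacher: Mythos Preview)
Your proposal is correct and follows essentially the same approach as the paper, which simply states that the proof of Theorem~\ref{Th:K3-surface} carries over with \cite[Corollary~32]{Langer-Zink:K3} replacing Deligne's theorem and the Beauville--Bogomolov form $\BB_{\sigma,\tau}$ replacing the cup product. Your write-up in fact spells out more of the details (the reduction to a square-zero thickening, the invocation of Proposition~\ref{Pr:deformation-lemma} via Remark~\ref{Rk:K3-disp} and Example~\ref{Ex:orth-minuscule}, and the use of Corollary~\ref{Co:Hodge-lift-orth}) than the paper itself does.
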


\end{document}